\documentclass[11pt]{article}
\usepackage{amsmath,amssymb,amsthm}
\usepackage{mathtools}
\usepackage{mathdots}
\usepackage{hyperref,url}
\hypersetup{
	colorlinks=true,
  linkcolor=black,          
  citecolor=black,         
  filecolor=black,      
  urlcolor=black           
}
\usepackage{graphicx}

\newcommand{\bigprime}[1]{%
  \raisebox{-0.15ex}{\scalebox{1.45}{$#1$}}%
}

\newcommand{\NN}{\mathbb{N}}
\newcommand{\ZZ}{\mathbb{Z}}

\newcommand{\RR}{\mathbb{R}}

\newcommand{\seqnum}[1]{\href{https://oeis.org/#1}{\rm \underline{#1}}}

\newcommand{\digitlen}[1]{\ell(#1)}

\title{Further proofs of conjectures from the OEIS}

\author{ Sela Fried\\ Department of Computer Science, Israel Academic College\\ Ramat Gan, Israel\\ \texttt{friedsela@gmail.com} }

\date{}

\theoremstyle{plain}
\newtheorem{theorem}{Theorem}
\newtheorem{lemma}[theorem]{Lemma}
\newtheorem{proposition}[theorem]{Proposition}
\newtheorem{corollary}[theorem]{Corollary}

\theoremstyle{definition}

\newtheorem{example}[theorem]{Example}

\theoremstyle{remark}
\newtheorem{remark}[theorem]{Remark}

\begin{document}
\maketitle

\begin{abstract}
This is the fourth work in a series devoted to proving conjectures recorded in the On-Line Encyclopedia of Integer Sequences (OEIS). The problems considered here concern elementary and multiplicative number theory, Fibonacci numbers, decimal concatenation, Diophantine and Pell equations, binary representations and bitwise operations, lattice paths, parity patterns, recurrences, and formal power series. Several of the results give complete characterizations of the relevant sequences; others establish exact identities, recurrences, generating functions, asymptotic estimates, integrality properties, or nonoccurrence results. The proofs use combinatorial bijections, congruences, M\"obius inversion, valuations, Fibonacci identities, Pell-type arguments, Lucas' theorem, Riordan arrays, Lagrange inversion, and generating-function methods.
\end{abstract}

\section{Introduction}

The On-Line Encyclopedia of Integer Sequences (OEIS) \cite{Sl} is a continually expanding repository of integer sequences. In addition to numerical data, its entries include references, formulas, links to related sequences, and conjectures or open questions contributed by users.

Continuing the work developed in \cite{Fr1, Fr2, Fr3}, the present paper contains results concerning more than thirty OEIS entries. These results include proofs of conjectured formulas and recurrences, characterizations of selected sequences, answers to open questions, and several strengthenings of previously stated conjectures. The manuscript is organized into two main parts: number-theoretic problems and problems involving generating functions.

The number-theoretic part addresses counting, divisibility, asymptotics, and Diophantine questions. Among other results, we give a counting interpretation of Euler’s totient function for \seqnum{A000010}, a primality characterization for \seqnum{A000680}, and an evaluation of a Fibonacci sum from \seqnum{A007598}. We also prove bounds for \seqnum{A078111}, derive a closed form for \seqnum{A111386}, prove that \seqnum{A116098} and \seqnum{A116129} coincide, and resolve problems involving powers, bitwise operations, Pell equations, and quadratic forms.

The second part concerns formal power series and generating functions. For example, we prove a theta-function identity for \seqnum{A000122}, establish a Riordan-array interpretation related to \seqnum{A108080}, prove integrality for \seqnum{A158110}, and derive generating functions and recurrences for restricted-word and inverse-series sequences. 

The proofs use bijections, congruences, M\"obius inversion, valuations, Fibonacci identities, Pell equations, coefficient extraction, Lucas' theorem, Riordan arrays, Lagrange inversion, and lattice-path arguments. In several cases, a stronger structural result yields the conjectured identity as an immediate consequence.

We begin by introducing some notation. We let $\NN$ denote the set of natural numbers $\{1,2,\ldots\}$ and set $\NN_0=\NN\cup\{0\}$. For $n\in\NN$ let $[n]=\{1,\ldots,n\}$ and set $[0]=\emptyset$. For a set $A$ we let $\# A$ denote the number of elements in $A$. For $n\in\NN$ we write $\digitlen{n}$ for the number of decimal digits of $n$. For a condition $c$, we write $\mathbf{1}_c$ for the indicator function for $c$, i.e., $\mathbf{1}_c=1$ if $c$ holds and $\mathbf{1}_c=0$ otherwise. For $n\in\NN$ let $\varphi(n)$ denote Euler's totient function. For $n\in\NN_0$, let $F_n$ denote the $n$th Fibonacci number. We extend $F_n$ to the negative integers via $F_{-n} = (-1)^{n+1}F_n$. For $n\in\NN_0$ let $L_n$ denote the $n$th Lucas number, and let $T_n=\frac{n(n+1)}{2}$ denote the $n$th triangular number. If a function $f(x)$ has the formal power series representation $f(x)=\sum_{n=0}^\infty a_n x^n$ and $k\in\NN_0$, we write $[x^k]f(x)$ for $a_k$. For $u,v\in\NN_0$ write $u\land v$ for their bitwise AND, $u\lor v$ for their bitwise OR, and $u\oplus v$ for their bitwise XOR, where the bitwise operations $\land$, $\lor$, and $\oplus$ are performed by writing $u$ and $v$ in binary and applying the corresponding Boolean operation to each pair of binary digits separately, and finally converting the resulting binary number back to decimal notation.

\section{Number theory}

\subsection{\texorpdfstring{\seqnum{A000010}}{}}

The statement of the following theorem was conjectured by Irwin in \seqnum{A000010}.

\begin{theorem}
Let $n\in\NN$ with $n\geq 2$ and let 
\[
\mathcal{S}(n) = \{(a,b,c)\in[n]^3 \;:\; n c - a b = 1\}.\] Then $\#\mathcal{S}(n)=\varphi(n)$.
\end{theorem}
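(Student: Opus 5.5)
We will construct an explicit bijection between $\mathcal{S}(n)$ and the set $U(n)=\{a\in[n]:\gcd(a,n)=1\}$, which has $\varphi(n)$ elements. The map is the projection $(a,b,c)\mapsto a$. We must show that it lands in $U(n)$, and that for each $a\in U(n)$ there is exactly one pair $(b,c)\in[n]^2$ with $nc-ab=1$.

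\emph{The map is well defined.} If $(a,b,c)\in\mathcal{S}(n)$, then $ab\equiv -1 \pmod n$. Hence $\gcd(a,n)=1$, so $a\in U(n)$. Moreover $b$ is determined modulo $n$ as $-a^{-1}\bmod n$. Since $b\in[n]$, $b$ is uniquely determined by $a$. Once $b$ is known, so is $c=(1+ab)/n$. This proves injectivity.

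\emph{The map is surjective.} Given $a\in U(n)$, let $b\in[n]$ be the unique representative of $-a^{-1}$ modulo $n$. Then $n\mid 1+ab$, and we set $c=(1+ab)/n$, which is a positive integer. It remains to check that $c\le n$, i.e., $1+ab\le n^2$. This step is the only point needing care, and it is where $n\ge 2$ is used.
\begin{itemize}
\item If $n\ge 2$, then $a\ne n$ unless $\gcd(n,n)=1$, which is impossible. So $a\le n-1$.
\item Similarly, $b\equiv -a^{-1}$ is a unit modulo $n$, so $b\ne n$ and $b\le n-1$.
\end{itemize}
Hence $1+ab\le 1+(n-1)^2<n^2$, giving $c\le n$. (For $n=1$ the count would fail, which explains the hypothesis.)

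Combining the two parts, the projection is a bijection $\mathcal{S}(n)\to U(n)$, so $\#\mathcal{S}(n)=\varphi(n)$. The only mildly delicate step is the bound $c\le n$. It reduces to observing that neither $a$ nor $b$ can equal $n$ when $n\ge2$.
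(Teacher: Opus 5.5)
Your proposal is correct and follows essentially the same route as the paper: the projection $(a,b,c)\mapsto a$ onto the units in $[n]$. Injectivity comes from $b\equiv -a^{-1}\pmod n$ having a unique representative in $[n]$, and surjectivity from choosing that $b$ and setting $c=(1+ab)/n$. The only cosmetic difference is in the bound $c\le n$: you use that neither $a$ nor $b$ can equal $n$, whereas the paper uses $ab\le n^2$, so $c\le n+\frac{1}{n}$, together with the integrality of $c$ and $n\ge 2$.
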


\begin{proof}
Let
\[
\mathcal{U}(n) = \{a\in [n] \;:\; \gcd(a,n)=1\}.
\]
Since $\#\mathcal{U}(n)=\varphi(n)$, it suffices to show that the map $\Theta\colon \mathcal{S}(n)\to\mathcal{U}(n)$ defined by $\Theta(a,b,c)= a$, is a bijection. 
\begin{enumerate}
\item $\Theta$ is well-defined. Let  $(a,b,c)\in\mathcal{S}(n)$ and let $d = \gcd(a,n)$. Thus, $d\mid a$ and $d\mid n$ and therefore $d$ divides $nc-ab=1$. Hence, $d=1$ and therefore
$a\in\mathcal{U}(n)$. 
\item $\Theta$ is injective. Let $a\in\mathcal{U}(n)$ and assume there are $b,b',c,c'\in[n]$ such that
$(a,b,c),(a,b',c')\in\mathcal{S}(n)$.
Then $nc - ab =1= nc' - ab'$.
Thus, $n(c-c') = a(b-b')$. In particular, 
$n\mid a(b-b')$. Since $\gcd(a,n)=1$, necessarily
$n\mid(b-b')$. But $b,b' \in[n]$. Thus, $b-b'=0$, i.e., $b = b'$. From this it follows that $n(c-c')=0$ and therefore $c = c'$.

\item $\Theta$ is surjective. Let $a\in\mathcal{U}(n)$. Since $\gcd(a,n)=1$, by B\'{e}zout's identity there are $u,v\in\ZZ$ such that $ua + vn = 1$. Multiplying by $-1$ and
reducing modulo $n$, we obtain $(-u)a \equiv -1 \pmod{n}$. Let $b\in[n]$ such that $b\equiv -u \pmod n$. Then $ab \equiv -1 \pmod{n}$. Thus, there exists $c\in\ZZ$ such that $ab = nc - 1$ and therefore $nc-ab=1$. It remains to show that $c=\frac{ab+1}{n}\in[n]$. Since $a,b\in[n]$, we have $\frac{2}{n} \leq c \leq n+\frac{1}{n}$. 
Since $n\geq 2$ and $c$ is an integer, necessarily $c\in[n]$.\qedhere
\end{enumerate}
\end{proof}

\subsection{ \texorpdfstring{\seqnum{A000680}}{}}

The statement of Theorem \ref{tttte} below was conjectured by Schulte in \seqnum{A000680}. We shall need
a result known by the name Euclid's lemma (e.g., \cite[Theorem 2.5]{B}): Let $a,b,c\in\ZZ$ with $\gcd(a,b)=1$. If $a\mid bc$, then $a\mid c$.    

We shall also need a result called Fermat's little theorem (e.g., \cite[Theorem 5.1]{B}): Let $p$ be a prime number and let $a\in\ZZ$ with $p\nmid a$. Then $a^{p-1}\equiv1\pmod p$.

Finally, let us recall Wilson's theorem (e.g., \cite[Theorem 5.4]{B}): Let $p$ be a prime number. Then $(p-1)!\equiv-1\pmod p$.   

\begin{theorem}\label{tttte}
Let $n\in\NN$. Then
\[
(2n+1)\mid \left(\frac{(2n)!}{2^n}+2^n \right)
\quad\Longleftrightarrow\quad
2n+1 \text{ is a prime number}.
\]
\end{theorem}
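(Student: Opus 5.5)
Let $m=2n+1$; we prove the two implications separately.

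\emph{Prime $\Rightarrow$ divisibility.} Suppose $p=2n+1$ is prime, so $n=\frac{p-1}{2}$. Since $\gcd(2^n,p)=1$, Euclid's lemma shows that $p\mid \frac{(2n)!}{2^n}+2^n$ holds if and only if $p\mid (2n)!+2^{2n}$. (The quotient $\frac{(2n)!}{2^n}$ is an integer, since $(2n)!=2^n n!\cdot(1\cdot3\cdots(2n-1))$.) Now $(2n)!=(p-1)!\equiv-1\pmod p$ by Wilson's theorem. Also $2^{2n}=2^{p-1}\equiv1\pmod p$ by Fermat's little theorem, since $p$ is odd and so $p\nmid 2$. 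Hence $(2n)!+2^{2n}\equiv0\pmod p$, which is the required divisibility.

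\emph{Divisibility $\Rightarrow$ prime.} Suppose $m=2n+1$ is composite. Then $m\ge 9$, so $n\ge 4$. Let $q$ be the smallest prime divisor of $m$. Then $q$ is odd and $q\le\sqrt m<m$, and in particular $q\le 2n$. Hence $q$ appears among the odd factors $1\cdot3\cdots(2n-1)$, so $q\mid \frac{(2n)!}{2^n}$. If $m$ divided the sum, then $q$ would divide it as well, and therefore $q\mid 2^n$. This is impossible because $q$ is odd. So $m$ does not divide $\frac{(2n)!}{2^n}+2^n$.

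This direction is the only point needing care. We must confirm that a proper odd divisor of $m$ really lies among $1,3,\dots,2n-1$, and that $\frac{(2n)!}{2^n}$ equals $n!\cdot(2n-1)!!$, which is divisible by every odd number up to $2n-1$. Both hold because $q<m=2n+1$ and $q$ is odd, so $q\le 2n-1$. Finally, the case $n$ with $m$ prime but small, such as $n=1$ and $m=3$, is already covered by the first direction, so no separate check is needed.
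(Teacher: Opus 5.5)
Your proof is correct and follows the paper's argument. The prime direction uses Euclid's lemma together with Wilson's and Fermat's theorems, and the converse shows that a small odd prime factor of a composite $2n+1$ divides the factorial part but not the power of $2$. The only cosmetic difference is that in the converse you work directly with $\frac{(2n)!}{2^n}=n!\,(2n-1)!!$, whereas the paper first passes to $(2n)!+2^{2n}$.
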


\begin{proof}
Since $\gcd(2n+1,2^n)=1$, by Euclid's lemma we have \begin{equation}\label{eq6800}
(2n+1)\mid \left(\frac{(2n)!}{2^n}+2^n\right)
\quad\Longleftrightarrow\quad
(2n+1) \mid ((2n)! + 2^{2n}).
\end{equation} Assume that $2n+1$ is a prime number. By Wilson's theorem, $(2n)! \equiv -1 \pmod{2n+1}$. Since $2n+1\neq 2$, by Fermat's little theorem, $2^{2n} \equiv 1 \pmod{2n+1}$. It follows that 
\[
(2n)! + 2^{2n} \equiv -1 + 1 \equiv 0 \pmod{2n+1}.
\] Thus, $(2n+1) \mid ((2n)! + 2^{2n})$. By \eqref{eq6800}, $(2n+1)\mid \left(\frac{(2n)!}{2^n}+2^n\right)$.

Conversely, assume that
$(2n+1)\mid \left(\frac{(2n)!}{2^n}+2^n\right)$. By \eqref{eq6800}, \begin{equation}\label{eeqazsf}
(2n+1)\mid ((2n)! + 2^{2n}).
\end{equation} Suppose that $2n+1$ is composite. Then $2n+1$ has a prime divisor $p\leq \sqrt{2n+1}\leq 2n$.
In particular, $p \mid (2n)!$ and therefore $(2n)! \equiv 0 \pmod p$. Since $2n+1$ is odd, $p\neq 2$ and therefore $2^{2n}\not \equiv 0 \pmod p$. Thus, $(2n)! + 2^{2n} \not \equiv 0 \pmod p$, meaning that 
\begin{equation}\label{neqa1}
p \nmid ((2n)! + 2^{2n}).
\end{equation} Since $p\mid (2n+1)$, it follows from \eqref{eeqazsf} that  $p \mid ((2n)! + 2^{2n})$, contradicting \eqref{neqa1}.
\end{proof}

\subsection{ \texorpdfstring{\seqnum{A007598}}{}}

The statement of Theorem \ref{t17598} below was conjectured by Bala in \seqnum{A007598}. We shall need several identities concerning the Fibonacci numbers and the Lucas numbers (e.g.,  \cite[Identity 103 on p.\ 112 and Corollary 5.5]{K}).

\begin{theorem}
\begin{enumerate}
\item Let $k\in\ZZ$ and $\ell\in\NN$. Then
\begin{equation}\label{e2467598}
F_{k+\ell}=F_kL_\ell-(-1)^\ell F_{k-\ell}.
\end{equation}   
\item Let $k\in\NN$. Then
\begin{equation}\label{e2467599}
F_{2k} = F_k L_k. 
\end{equation}
\end{enumerate}
\end{theorem}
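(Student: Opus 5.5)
The natural route is Binet's formulas. Write $\alpha=\frac{1+\sqrt5}{2}$ and $\beta=\frac{1-\sqrt5}{2}$, so that $\alpha\beta=-1$, $F_m=\frac{\alpha^m-\beta^m}{\sqrt5}$ and $L_m=\alpha^m+\beta^m$. Before using these we must check that the Binet formula for $F_m$ remains valid for negative indices under the convention $F_{-n}=(-1)^{n+1}F_n$ fixed in the introduction. Using $\alpha^{-1}=-\beta$ and $\beta^{-1}=-\alpha$, we get
\[
\frac{\alpha^{-n}-\beta^{-n}}{\sqrt5}=\frac{(-1)^n(\beta^n-\alpha^n)}{\sqrt5}=(-1)^{n+1}F_n.
\]
This agrees with the convention, so the formula holds for all $m\in\ZZ$. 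Establishing this is the one point needing care, since \eqref{e2467598} is stated for all $k\in\ZZ$ and $k-\ell$ may be negative. Everything else is routine algebra.

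For \eqref{e2467598}, expand the right-hand side:
\[
\sqrt5\,F_kL_\ell=(\alpha^k-\beta^k)(\alpha^\ell+\beta^\ell)=\alpha^{k+\ell}-\beta^{k+\ell}+\alpha^k\beta^\ell-\beta^k\alpha^\ell .
\]
The two cross terms can be rewritten using $\alpha\beta=-1$:
\[
\alpha^k\beta^\ell-\beta^k\alpha^\ell=(\alpha\beta)^\ell(\alpha^{k-\ell}-\beta^{k-\ell})=(-1)^\ell\sqrt5\,F_{k-\ell}.
\]
Substituting gives $\sqrt5F_kL_\ell=\sqrt5F_{k+\ell}+(-1)^\ell\sqrt5F_{k-\ell}$. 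Dividing by $\sqrt5$ and rearranging yields \eqref{e2467598}.

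For \eqref{e2467599}, apply \eqref{e2467598} with $\ell=k$. The correction term is then $(-1)^kF_0=0$, which leaves $F_{2k}=F_kL_k$. As a check, the same identity also follows directly from $(\alpha^k-\beta^k)(\alpha^k+\beta^k)=\alpha^{2k}-\beta^{2k}$.

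If one prefers to avoid irrational numbers, there is an alternative proof of \eqref{e2467598}. Fix $\ell$ and note that both sides, as functions of $k$, satisfy the Fibonacci recurrence in $k$ on all of $\ZZ$; this uses the fact that the extended $F$ satisfies $F_{m+1}=F_m+F_{m-1}$ for all $m\in\ZZ$. It would then suffice to check the identity at $k=0$ and $k=1$, which reduces to $F_\ell=(-1)^{\ell+1}F_{-\ell}$ and $F_{\ell+1}+(-1)^\ell F_{1-\ell}=L_\ell$. The latter needs $L_\ell=F_{\ell+1}+F_{\ell-1}$. I would nonetheless present the Binet argument, as it is shorter.
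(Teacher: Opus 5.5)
Your proof is correct. The paper does not prove this theorem at all; it simply quotes both identities from Koshy's book \cite{K}. Your Binet computation therefore supplies an actual argument where the paper has only a citation.

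The main thing your route buys is the point you flag yourself. Textbook versions of these identities are usually stated for nonnegative indices, but the paper needs \eqref{e2467598} for all $k\in\ZZ$. In Lemma~\ref{l17597} it is applied with $k=m+s$ and $\ell=s$, so $k-\ell=m$, and in the proof of Theorem~\ref{t17598} the index $m$ does take negative values. Your check that $\frac{\alpha^m-\beta^m}{\sqrt5}$ agrees with the convention $F_{-n}=(-1)^{n+1}F_n$ makes that extension rigorous. Deriving \eqref{e2467599} as the case $\ell=k$, where $F_0=0$ removes the correction term, is also tidier than citing it separately.

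Your recurrence-in-$k$ alternative is valid too. Its base cases reduce, as you say, to the sign convention and to $L_\ell=F_{\ell+1}+F_{\ell-1}$. It does require the extended sequence to satisfy $F_{m+1}=F_m+F_{m-1}$ on all of $\ZZ$, which follows from the same Binet check or from a direct parity computation.
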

The following identity is due to Catalan (e.g., \cite[Theorem 5.11]{K}). It is originally stated for $s,t\in\NN$ with $s\geq t$, but with the above extension of the Fibonacci numbers to the negative integers, the identity may be shown to hold true for arbitrary $s,t\in\ZZ$.

\begin{theorem}
Let $s,t\in\ZZ$. Then \begin{equation}\label{catalan}
F_{s+t}F_{s-t}-F_s^{2}=(-1)^{s+t+1}F_t^{2}. 
\end{equation}
\end{theorem}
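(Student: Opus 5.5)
The plan is to avoid reducing to the classical case $s\geq t\geq 0$ and instead prove \eqref{catalan} for all $s,t\in\ZZ$ at once via Binet's formula. Let $\alpha=\frac{1+\sqrt5}{2}$ and $\beta=\frac{1-\sqrt5}{2}$, so that $\alpha\beta=-1$ and $F_n=\frac{\alpha^n-\beta^n}{\sqrt5}$ for $n\in\NN_0$.

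\textbf{Step 1: Binet's formula for all $n\in\ZZ$.} We check that Binet's formula holds for negative indices under the convention $F_{-n}=(-1)^{n+1}F_n$. Since $\alpha^{-1}=-\beta$ and $\beta^{-1}=-\alpha$, for $n\in\NN$ we have $\alpha^{-n}=(-1)^n\beta^n$ and $\beta^{-n}=(-1)^n\alpha^n$. Hence
\[
\frac{\alpha^{-n}-\beta^{-n}}{\sqrt5}=(-1)^n\frac{\beta^n-\alpha^n}{\sqrt5}=(-1)^{n+1}F_n=F_{-n}.
\]
This is the only place where the extension to negative integers enters. After this step the formula $F_m=(\alpha^m-\beta^m)/\sqrt5$ may be used for every integer index, in particular for $s+t$, $s-t$, $s$ and $t$, whatever their signs.

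\textbf{Step 2: Expand the left-hand side.} Expanding the product gives
\[
5F_{s+t}F_{s-t}=\alpha^{2s}+\beta^{2s}-\alpha^{s+t}\beta^{s-t}-\alpha^{s-t}\beta^{s+t}.
\]
The cross terms simplify using $\alpha\beta=-1$:
\[
\alpha^{s+t}\beta^{s-t}=(\alpha\beta)^{s-t}\alpha^{2t}=(-1)^{s-t}\alpha^{2t},
\qquad
\alpha^{s-t}\beta^{s+t}=(-1)^{s-t}\beta^{2t}.
\]
Similarly,
\[
5F_s^2=\alpha^{2s}+\beta^{2s}-2(-1)^s.
\]
Subtracting, the terms $\alpha^{2s}+\beta^{2s}$ cancel, and we obtain
\[
5\bigl(F_{s+t}F_{s-t}-F_s^2\bigr)=-(-1)^{s+t}\bigl(\alpha^{2t}+\beta^{2t}\bigr)+2(-1)^s,
\]
where we used $(-1)^{s-t}=(-1)^{s+t}$.

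\textbf{Step 3: Expand the right-hand side and compare.} We have
\[
5F_t^2=\alpha^{2t}+\beta^{2t}-2(-1)^t,
\]
so
\[
5(-1)^{s+t+1}F_t^2=-(-1)^{s+t}\bigl(\alpha^{2t}+\beta^{2t}\bigr)+2(-1)^{s+2t}.
\]
Since $(-1)^{s+2t}=(-1)^s$, this coincides with the expression obtained in Step 2, which proves \eqref{catalan}.

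I expect no real obstacle here. The only points needing care are the sign bookkeeping with exponents that may be negative, namely $(\alpha\beta)^{s-t}=(-1)^{s-t}$ for all integers $s-t$, and the preliminary verification in Step 1 that Binet's formula is compatible with the chosen extension $F_{-n}=(-1)^{n+1}F_n$.
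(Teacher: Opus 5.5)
Your proof is correct, and it does more than the paper does. The paper gives no proof of this identity: it cites the classical form of Catalan's identity for $s,t\in\NN$ with $s\geq t$, and only asserts that, with the convention $F_{-n}=(-1)^{n+1}F_n$, it ``may be shown'' to hold for all $s,t\in\ZZ$. You replace this citation-plus-extension with one uniform computation. Once Step~1 shows that Binet's formula is compatible with the negative-index convention, every later manipulation is valid for arbitrary integer exponents because $\alpha,\beta\neq 0$. In particular this covers $(\alpha\beta)^{s-t}=(-1)^{s-t}$ and $(\alpha\beta)^{s}=(-1)^{s}$, so no case distinction on the signs of $s$, $t$, $s\pm t$ is needed. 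The reduction the paper leaves implicit would instead go through the classical case. One checks that both sides are invariant under $t\mapsto -t$ (which only swaps the two factors on the left) and under $s\mapsto -s$. Then one treats $0\leq s<t$ by writing $F_{s-t}=(-1)^{t-s+1}F_{t-s}$ and applying the classical identity with the roles of $s$ and $t$ exchanged. This works, but it needs sign bookkeeping in each case. Your range is exactly what the paper needs, since the identity is applied with $s=n$ and $t=2k+1$ for all $n\in\NN$, including $n<2k+1$ where $s-t<0$. The paper's approach buys brevity by leaning on the literature; yours gives a self-contained proof of the full statement as written.
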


Finally, we shall need the following result. 

\begin{lemma}\label{l17597}
Let $s\in\NN$ be odd and let $m\in\ZZ$ such that
$m,m+s,m+2s\neq 0$. Then
\[
\frac{1}{F_m F_{m+2s}}
=
\frac{F_s}{F_{2s}}
\left(
\frac{1}{F_m F_{m+s}}
-
\frac{1}{F_{m+s}F_{m+2s}}
\right).
\]
\end{lemma}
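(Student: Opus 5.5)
Combine the two fractions on the right-hand side over the common denominator $F_mF_{m+s}F_{m+2s}$, which is nonzero by the hypothesis together with the fact that $F_j=0$ only for $j=0$ (also for negative $j$, by the extension $F_{-n}=(-1)^{n+1}F_n$). The bracket becomes
\[
\frac{F_{m+2s}-F_m}{F_mF_{m+s}F_{m+2s}},
\]
so the claimed identity is equivalent, after multiplying through by $F_mF_{m+s}F_{m+2s}F_{2s}$ (note $F_{2s}\neq 0$ since $s\ge 1$), to
\[
F_{2s}F_{m+s}=F_s\,(F_{m+2s}-F_m).
\]

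To prove this, apply \eqref{e2467598} with $k=m+s$ and $\ell=s$. This gives $F_{m+2s}=F_{m+s}L_s-(-1)^sF_m$. Since $s$ is odd, $(-1)^s=-1$, so $F_{m+2s}-F_m=F_{m+s}L_s$. Multiplying by $F_s$ and using \eqref{e2467599}, namely $F_sL_s=F_{2s}$, yields exactly $F_s(F_{m+2s}-F_m)=F_{2s}F_{m+s}$, as required.

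The only point needing care is that \eqref{e2467598} is used with $k=m+s$ possibly negative. The statement allows $k\in\ZZ$, so this is covered. The main (minor) obstacle is justifying that the denominators are nonzero, which I would settle by the remark above that $F_j\neq0$ for $j\neq0$. The parity of $s$ is used precisely to turn the sign $-(-1)^s$ into $+1$, which is what makes the difference $F_{m+2s}-F_m$ factor.
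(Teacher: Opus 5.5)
Your proposal is correct and follows the paper's proof: both apply \eqref{e2467598} with $k=m+s$, $\ell=s$, use the oddness of $s$ to get $F_{m+2s}-F_m=F_{m+s}L_s$, invoke $F_{2s}=F_sL_s$, and then divide by $F_{2s}F_mF_{m+s}F_{m+2s}$. Your remark that $F_j\neq 0$ for $j\neq 0$ (including negative indices) makes explicit the nonvanishing of the denominators, which the paper uses without comment.
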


\begin{proof}
Taking $k=m+s$ and $\ell=s$ in \eqref{e2467598} yields
\[
F_{m+2s} = F_{m+s}L_s + F_m.
\] Multiplying the equation by $F_s$ and rearranging, we obtain
\[
F_sL_sF_{m+s}=F_s(F_{m+2s}-F_m).
\]
By \eqref{e2467599}, $F_{2s}=F_s L_s$. Thus, 
\[
F_{2s}F_{m+s}=F_s(F_{m+2s}-F_m).
\]
Dividing the equation by $F_{2s}F_mF_{m+s}F_{m+2s}$, the assertion follows.
\end{proof}

\begin{theorem}\label{t17598}
Let $k\in\NN_0$. Then
\begin{equation}\label{e337598}
\sum_{\substack{n\in\NN \\ n\neq 2k+1}}
\frac{1}{F_n^2+(-1)^n F_{2k+1}^2}
=
\frac{1}{F_{4k+2}^2}.
\end{equation}
\end{theorem}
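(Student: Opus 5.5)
Throughout, write $s=2k+1$, which is odd. The plan is to factor each denominator with Catalan's identity \eqref{catalan}, split each summand into a difference with Lemma~\ref{l17597}, telescope along residue classes modulo $s$, and then show that the leftover boundary terms cancel in pairs.

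\textbf{Step 1: factor the denominators.} Apply \eqref{catalan} with $(s,t)$ replaced by $(n,s)$. This gives $F_{n+s}F_{n-s}-F_n^2=(-1)^{n+s+1}F_s^2$. Since $s$ is odd, $(-1)^{n+s+1}=(-1)^n$, so
\[
F_n^2+(-1)^nF_s^2=F_{n-s}F_{n+s}.
\]
For $n\neq s$ both factors are nonzero, because $F_j\neq 0$ for $j\neq 0$ (using $F_{-j}=(-1)^{j+1}F_j$). Substituting $m=n-s$, the left-hand side of \eqref{e337598} becomes
\[
\sum_{m\geq 1-s,\ m\neq 0}\frac{1}{F_mF_{m+2s}}.
\]

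\textbf{Step 2: split and telescope.} For every $m$ in this range we have $m\neq 0$, $m+s=n\geq 1$ and $m+2s>0$, so Lemma~\ref{l17597} applies. With $g(m)=1/(F_mF_{m+s})$ it gives
\[
\frac{1}{F_mF_{m+2s}}=\frac{F_s}{F_{2s}}\bigl(g(m)-g(m+s)\bigr).
\]
Because $g(m)\to 0$ exponentially, the series converges absolutely, so I may regroup the $m$ by residue class modulo $s$.

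\begin{itemize}
\item For a class whose least element $r$ lies in $\{1-s,\dots,-1\}$, the terms $m=r,r+s,r+2s,\dots$ telescope to $g(r)$.
\item The class of $0$ runs over $m=s,2s,\dots$, since $m=0$ is excluded. It telescopes to $g(s)=1/(F_sF_{2s})$.
\end{itemize}

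Hence the sum equals
\[
\frac{F_s}{F_{2s}}\Bigl(\frac{1}{F_sF_{2s}}+\sum_{r=1-s}^{-1}g(r)\Bigr).
\]
This bookkeeping around the excluded index $m=0$, where $g(0)$ is undefined, is the main obstacle. The residue-class organization is designed to avoid ever touching $g(0)$.

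\textbf{Step 3: cancel the boundary terms.} The involution $r\mapsto r'=-s-r$ maps $\{1-s,\dots,-1\}$ to itself. It has no fixed point, since $2r=-s$ is impossible for odd $s$. Using the extension to negative indices,
\[
F_{r'}F_{r'+s}=F_{-s-r}F_{-r}=(-1)^{s+r+1}(-1)^{r+1}F_{s+r}F_r=-F_rF_{r+s},
\]
because $s+2r+2$ is odd. Therefore $g(r')=-g(r)$, and the finite sum over $r$ vanishes.

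The total is then $\frac{F_s}{F_{2s}}\cdot\frac{1}{F_sF_{2s}}=\frac{1}{F_{2s}^2}=\frac{1}{F_{4k+2}^2}$, which is \eqref{e337598}.
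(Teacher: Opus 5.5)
Your proof is correct and takes essentially the same route as the paper. Both arguments factor the denominators via Catalan's identity, shift to $m=n-s$, split each term with Lemma~\ref{l17597}, and cancel the boundary terms with the pairing $r\mapsto -s-r$ (the paper's $A_{-i}=-A_{-(2k+1-i)}$). The only difference is bookkeeping: you telescope along residue classes modulo $s$, whereas the paper subtracts the two absolutely convergent sums over $D$ and $D'=D+s$. Both methods leave the same terms, indexed by $D\setminus D'=\{-2k,\dots,-1\}\cup\{2k+1\}$.
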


\begin{proof}
Let $S_k$ denote the sum on the left-hand side of \eqref{e337598} and let $n\in\NN$.
Using \eqref{catalan} with $s=n$ and $t=2k+1$, we obtain 
\begin{equation}\label{e17598}
F_{n}^{2}+(-1)^{n}F_{2k+1}^{2}=F_{n+2k+1}F_{n-2k-1}.
\end{equation} Thus,
\[
S_k=\sum_{\substack{n\in\NN \\ n\neq 2k+1}}
\frac{1}{F_{n+2k+1}F_{n-2k-1}}=\sum_{\substack{m\ge -2k\\ m\neq 0}}
\frac{1}{F_m F_{m+4k+2}}.
\] Using Lemma~\ref{l17597} with $s=2k+1$, we obtain
\begin{equation}\label{eee17598}
S_k=\frac{F_{2k+1}}{F_{4k+2}}\sum_{\substack{m\ge -2k\\ m\neq 0}}
\left(
\frac{1}{F_mF_{m+2k+1}}
-
\frac{1}{F_{m+2k+1}F_{m+4k+2}}
\right).
\end{equation} Since $|F_n|\approx \phi^n$ with $\phi=\frac{1+\sqrt{5}}{2}>1$, each of the two sums \[\sum_{\substack{m\ge -2k\\ m\neq 0}}
\frac{1}{F_mF_{m+2k+1}}, \qquad
\sum_{\substack{m\ge -2k\\ m\neq 0}}
\frac{1}{F_{m+2k+1}F_{m+2(2k+1)}}
\] converges absolutely. For $m\in\ZZ\setminus\{0,-2k-1\}$ set
$A_m = \frac{1}{F_mF_{m+2k+1}}$ and let
\[
D = \{m\in\ZZ \;:\; m\ge -2k, m\neq 0\},
\qquad
D' = \{m+2k+1 \;:\; m\in D\}.
\] Notice that $D'=\NN\setminus\{2k+1\}$ and therefore
\[D\setminus D'=\{i\in\ZZ\;:\;1\leq -i\leq 2k\}\cup\{2k+1\}.\]
It follows that
\[
S_k=
\frac{F_{2k+1}}{F_{4k+2}}\left(\sum_{m\in D}A_m - \sum_{m\in D'}A_m\right)
=\frac{F_{2k+1}}{F_{4k+2}}\left(\sum_{i=1}^{2k} A_{-i} + A_{2k+1}\right).
\]
We claim that $\sum_{i=1}^{2k}A_{-i}=0$. Indeed, let $i\in [2k]$. It is straightforward to verify that $A_{-i}=-A_{-(2k+1-i)}$. Thus, the terms in the sum $\sum_{i=1}^{2k}A_{-i}$ cancel each other and the sum is $0$. It follows that
\[S_k = \frac{F_{2k+1}}{F_{4k+2}}A_{2k+1}=\frac{F_{2k+1}}{F_{4k+2}}\frac{1}{F_{2k+1}F_{4k+2}}=\frac{1}{F_{4k+2}^2},\] and the proof is complete.
\end{proof}

\subsection{ \texorpdfstring{\seqnum{A072894}, \seqnum{A073504}, and \seqnum{A098725}}{}}

Let $n\in\NN$. Define an integer sequence $(c^{(n)}_k)_{k\in\NN}$ by
$c^{(n)}_1=1, c^{(n)}_2=n$, and, for $k\in\NN$,
\[
c^{(n)}_{k+2}=
\begin{cases}
\dfrac{c^{(n)}_{k+1}+c^{(n)}_{k}}{2}, & \textnormal{if $c^{(n)}_{k+1}\equiv c^{(n)}_{k}\pmod 2$},\\
\dfrac{c^{(n)}_{k+1}+c^{(n)}_{k}+1}{2}, & \textnormal{if $c^{(n)}_{k+1}\not\equiv c^{(n)}_{k}\pmod 2$}.
\end{cases}
\]
Set $a_n=\lim_{k\to\infty} c^{(n)}_k$. The statements of Theorems \ref{Cloitre2} and \ref{Cloitre3} were conjectured by Cloitre in \seqnum{A072894}.

\begin{lemma}\label{l1894}
\begin{enumerate}
\item Let $x,y\in\ZZ$. Then
\begin{equation}\label{mod2}
\left\lceil \frac{x+y}{2}\right\rceil=
\begin{cases}
\frac{x+y}{2}, & \textnormal{if $x\equiv  y\pmod 2$},\\
\frac{x+y+1}{2}, & \textnormal{if $x\not\equiv  y\pmod 2$}.
\end{cases}
\end{equation}
\item  Let $x\in\ZZ$ and $r\in\{0,1,2\}$. Then
\begin{equation}\label{eq2}
\left\lceil \frac{x+r}{2}\right\rceil-\left\lceil \frac{x}{2}\right\rceil \in \{0,1\}.
\end{equation}
\end{enumerate}
\end{lemma}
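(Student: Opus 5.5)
For part (1), the plan is to split into the two parity cases for $x+y$. When $x\equiv y\pmod 2$, the sum $x+y$ is even, so $\frac{x+y}{2}$ is an integer and its ceiling is itself. When $x\not\equiv y\pmod 2$, the sum $x+y$ is odd, so $x+y=2q+1$ for some $q\in\ZZ$. Then $\frac{x+y}{2}=q+\frac12$, and its ceiling is $q+1=\frac{x+y+1}{2}$. This gives \eqref{mod2} directly.

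For part (2), the plan is to use the standard description $\lceil m/2\rceil=\frac{m+\epsilon(m)}{2}$, where $\epsilon(m)\in\{0,1\}$ is the parity of $m$; this is just the one-variable case of the argument above. Then
\[
\left\lceil \frac{x+r}{2}\right\rceil-\left\lceil \frac{x}{2}\right\rceil=\frac{r+\epsilon(x+r)-\epsilon(x)}{2}.
\]
Checking the three values of $r$:
\begin{itemize}
\item For $r=0$ the difference is $0$.
\item For $r=2$ the parity is unchanged, so the difference is $1$.
\item For $r=1$ the parities differ. If $x$ is even, the difference is $\frac{1+1-0}{2}=1$. If $x$ is odd, it is $\frac{1+0-1}{2}=0$.
\end{itemize}
In every case the value lies in $\{0,1\}$.

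As an alternative for part (2), one can note that $m\mapsto\lceil m/2\rceil$ is nondecreasing and $\lceil (m+2)/2\rceil=\lceil m/2\rceil+1$. Monotonicity then bounds the difference between $0$ and $1$ for all $r\in\{0,1,2\}$. Neither route has a real obstacle; the only care needed is the parity bookkeeping in the case $r=1$.
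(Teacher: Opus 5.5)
The paper states this lemma without proof, treating both parts as elementary facts about the ceiling function; it then uses part~(1) in \eqref{c1894} and part~(2) in the proof of Theorem~\ref{Cloitre2}. Your argument supplies the omitted verification, and it is correct and complete. Of your two routes for part~(2), the monotonicity one is cleaner: it sandwiches $\lceil (x+r)/2\rceil$ between $\lceil x/2\rceil$ and $\lceil (x+2)/2\rceil=\lceil x/2\rceil+1$. This avoids any case split and shows that only $0\le r\le 2$ matters, which is exactly how the lemma is applied with $r=\delta_k+\delta_{k+1}$.
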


\begin{lemma}\label{cc0}
For every $n\in\NN$ the limit $\lim_{k\to\infty} c^{(n)}_k$ exists.
\end{lemma}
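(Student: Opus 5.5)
By \eqref{mod2}, the recurrence can be written compactly as $c^{(n)}_{k+2}=\left\lceil \frac{c^{(n)}_{k+1}+c^{(n)}_k}{2}\right\rceil$. The plan is to show that the sequence is eventually constant. It suffices to find a quantity that is a nonnegative integer, never increases, and strictly decreases until the sequence stabilizes. The natural choice is the gap $d_k=|c^{(n)}_{k+1}-c^{(n)}_k|$. If $n=1$, the sequence is constantly $1$ and there is nothing to prove, so assume $n\ge 2$.

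\textbf{Step 1 (consecutive terms enclose the next one).} Write $x=c^{(n)}_k$ and $y=c^{(n)}_{k+1}$, and suppose $x\le y$ (the other case is symmetric). Then
\[
x\le \left\lceil \tfrac{x+y}{2}\right\rceil\le y .
\]
Hence $c^{(n)}_{k+2}$ lies between $c^{(n)}_k$ and $c^{(n)}_{k+1}$. By induction, every term satisfies $1\le c^{(n)}_k\le n$, and the closed intervals spanned by consecutive pairs are nested.

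\textbf{Step 2 (the gap shrinks).} From Step 1 we get $d_{k+1}=\left|y-\left\lceil\frac{x+y}{2}\right\rceil\right|=\left\lfloor \frac{y-x}{2}\right\rfloor$ when $x\le y$. In the case $x>y$ the same computation gives $d_{k+1}=\left\lceil\frac{x-y}{2}\right\rceil$. In either case $d_{k+1}\le\lceil d_k/2\rceil$. This is strictly less than $d_k$ whenever $d_k\ge 2$, and it equals $0$ or $1$ when $d_k=1$.

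\textbf{Step 3 (stabilization).} Because $d_k$ is a nonnegative integer and strictly decreases while $d_k\ge2$, we eventually reach $d_k\le1$.
\begin{itemize}
\item If $d_k=0$, the two consecutive terms are equal, and the sequence is constant from then on.
\item If $d_k=1$, the two terms have different parity, so the next term is $\frac{x+y+1}{2}=\max(x,y)$.
\end{itemize}
This is the delicate point, because the second case needs to be followed further. Suppose the pair is $(x,x+1)$. The next term is $x+1$, producing the equal pair $(x+1,x+1)$, and the sequence is constant afterwards. Suppose instead the pair is $(x+1,x)$. The next term is $x+1$, giving the pair $(x,x+1)$, which becomes constant one step later. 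Either way the sequence is eventually constant, so the limit exists.

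The main obstacle is the boundary case $d_k=1$ in Step 3: the bound from Step 2 does not force a strict decrease there, so the argument has to be finished by tracking the next one or two terms explicitly, as above.
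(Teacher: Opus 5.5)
Your proof is correct and follows essentially the same route as the paper: both take the gap $|c^{(n)}_{k+1}-c^{(n)}_k|$ as a monovariant, show it strictly decreases while it is at least $2$, and track the two orientations of a gap of $1$ explicitly until two consecutive terms coincide. Your enclosure observation in Step 1 and your sharper bound $d_{k+1}\le\lceil d_k/2\rceil$ (the paper uses $\Delta_{k+1}\le\Delta_k-1$) are harmless refinements, not a different argument.
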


\begin{proof}
For $k\in\NN$ set 
$c_k=c^{(n)}_k$ and 
$\Delta_k=|c_k-c_{k+1}|$. First, we observe that, by \eqref{mod2}, 
\begin{equation}\label{c1894}
c_{k+2}=\left\lceil \frac{c_{k+1}+c_{k}}{2}\right\rceil.
\end{equation}
Thus, 
\begin{equation}\label{ee1}
c_{k+2}-c_{k+1}
=\left\lceil \frac{c_{k+1}+c_k}{2}\right\rceil-c_{k+1}
=\left\lceil \frac{c_k-c_{k+1}}{2}\right\rceil.
\end{equation}
We now distinguish between three possibilities for $\Delta_k$.
\begin{enumerate}
\item $\Delta_k=0$. From \eqref{ee1} it follows that $\Delta_{k+1}=0$ as well.
\item $\Delta_k=1$. If $c_k-c_{k+1}=-1$, then by \eqref{ee1}, $\Delta_{k+1}=0$.
If $c_k-c_{k+1}=1$, then by \eqref{ee1},  $c_{k+2}-c_{k+1}=1$. Applying this case with $k+1$ yields $\Delta_{k+2}=0$.
\item $\Delta_k\ge 2$. If $c_k-c_{k+1}\leq -2$, then $\frac{c_k-c_{k+1}}{2}\leq -1$. By \eqref{ee1},
\[
\Delta_{k+1}=|c_{k+1}-c_{k+2}|=\left|\left\lceil \frac{c_k-c_{k+1}}{2}\right\rceil\right|
=\left\lfloor \frac{c_{k+1}-c_k}{2}\right\rfloor
\leq c_{k+1}-c_k-1
=\Delta_k-1.
\]
If $c_k-c_{k+1}\geq 2$, then $\frac{c_{k}-c_{k+1}}{2}\ge 1$. By \eqref{ee1},
\[
\Delta_{k+1}=|c_{k+1}-c_{k+2}|=\left|\left\lceil \frac{c_{k}-c_{k+1}}{2}\right\rceil\right|
\leq c_{k}-c_{k+1}-1
=\Delta_k-1.
\]
\end{enumerate}
By definition, $\Delta_1=|c_2-c_1|=|n-1|$. By the third case, if $\Delta_k\ge 2$ then $\Delta_{k+1}<\Delta_k$. Thus, for some $k\in\NN$, we have $\Delta_k\in\{0,1\}$. From the first case it follows that if $\Delta_k=0$, then $\Delta_\ell=0$ for every $\ell>k$. If $\Delta_k=1$, by the second case, $\Delta_{k+2}=0$. Thus, $\Delta_\ell=0$ for every $\ell>k+1$. We conclude that the sequence $(\Delta_k)_{k\in\NN}$ is eventually zero and therefore $(c_k)_{k\in\NN}$ is eventually constant. In particular, the limit $a_n=\lim_{k\to\infty} c_k$ exists.
\end{proof}

\begin{theorem}\label{Cloitre2}
Let $n\in\NN$. Then $a_{n+1}-a_n\in\{0,1\}$.
\end{theorem}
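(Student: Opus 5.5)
We will compare the two sequences $c^{(n)}_k$ and $c^{(n+1)}_k$ term by term. The claim to aim for is the following stronger statement: for every $k\in\NN$,
\[
c^{(n+1)}_k-c^{(n)}_k\in\{0,1\}.
\]
Taking $k$ large and using Lemma~\ref{cc0}, both sequences are eventually constant, with values $a_{n+1}$ and $a_n$. Hence the difference $a_{n+1}-a_n$ is eventually a term of the difference sequence and lies in $\{0,1\}$.

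We prove the term-by-term claim by strong induction on $k$, tracking a pair of consecutive differences. Write $x_k=c^{(n)}_k$, $y_k=c^{(n+1)}_k$ and $d_k=y_k-x_k$. The base cases are $d_1=0$ and $d_2=(n+1)-n=1$. For the inductive step we use the ceiling form \eqref{c1894} from the proof of Lemma~\ref{cc0}:
\[
y_{k+2}=\left\lceil \frac{y_{k+1}+y_k}{2}\right\rceil=\left\lceil \frac{x_{k+1}+x_k+d_{k+1}+d_k}{2}\right\rceil.
\]
If $d_k,d_{k+1}\in\{0,1\}$, then $r=d_k+d_{k+1}\in\{0,1,2\}$. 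Applying \eqref{eq2} with $x=x_k+x_{k+1}$ gives $d_{k+2}\in\{0,1\}$. Since each step only needs the two previous differences to lie in $\{0,1\}$, the induction closes directly.

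There is essentially no obstacle here. The one point requiring care is justifying the ceiling reformulation \eqref{c1894} of the recursion, which follows from \eqref{mod2}. The rest is checking the base cases and then passing to the limit, which is legitimate because both sequences are eventually constant by Lemma~\ref{cc0}, so no genuine limiting argument is needed.
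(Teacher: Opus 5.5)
Your proposal is correct and essentially the same as the paper's proof. You show by a two-term induction, using the ceiling form \eqref{c1894} and \eqref{eq2}, that $c^{(n+1)}_k-c^{(n)}_k\in\{0,1\}$ for every $k$, and then pass to the eventually constant values guaranteed by Lemma~\ref{cc0}.
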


\begin{proof}
For $k\in\NN$ set $\delta_k=c^{(n+1)}_k-c^{(n)}_k$. We prove by induction that
$\delta_k,\delta_{k+1}\in\{0,1\}$ for every $k\in\NN$. We have \[\delta_1=1-1=0,\qquad  \delta_2=n+1-n=1.\] 
Now assume that $\delta_k,\delta_{k+1}\in\{0,1\}$ for some $k\in\NN$. Thus, $\delta_k+\delta_{k+1}\in\{0,1,2\}$. By \eqref{c1894} and \eqref{eq2},
\begin{align*}
\delta_{k+2}&=c^{(n+1)}_{k+2}-c^{(n)}_{k+2}\\
&=\left\lceil \frac{c_{k+1}^{(n+1)}+c_{k}^{(n+1)}}{2}\right\rceil-\left\lceil \frac{c_{k+1}^{(n)}+c_{k}^{(n)}}{2}\right\rceil\\
&=\left\lceil \frac{c_{k+1}^{(n)}+c_{k}^{(n)}+\delta_{k}+\delta_{k+1}}{2}\right\rceil-\left\lceil \frac{c_{k+1}^{(n)}+c_{k}^{(n)}}{2}\right\rceil\in\{0,1\}.
\end{align*}
This concludes the proof of the induction. It follows that
\[a_{n+1}-a_n = \lim_{k\to\infty} c_k^{(n+1)}-\lim_{k\to\infty} c_k^{(n)}=\lim_{k\to\infty} (c_k^{(n+1)}-c_k^{(n)})=\lim_{k\to\infty} \delta_k.\] Since $\delta_k\in\{0,1\}$ for every $k\in\NN$, necessarily $\lim_{k\to\infty} \delta_k\in\{0,1\}$, and the proof is complete.
\end{proof}

\begin{theorem}\label{Cloitre3}
We have
\[
\lim_{n\to\infty}\frac{a_n}{n}=\frac{2}{3}.
\]
\end{theorem}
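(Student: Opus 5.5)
Our plan is to compare the integer recurrence with the real averaging recurrence $x_{k+2}=\frac{x_{k+1}+x_k}{2}$. Started from $x_1=1$, $x_2=n$, that recurrence converges to $\frac{x_1+2x_2}{3}=\frac{1+2n}{3}$, since $x_k+2x_{k+1}$ is invariant. For $c_k=c^{(n)}_k$ we set $I_k=c_k+2c_{k+1}$. By \eqref{c1894},
\[
I_{k+1}=c_{k+1}+2\left\lceil\frac{c_{k+1}+c_k}{2}\right\rceil = I_k+\epsilon_k,
\]
where $\epsilon_k=2\lceil\frac{c_k+c_{k+1}}{2}\rceil-(c_k+c_{k+1})\in\{0,1\}$. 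By \eqref{mod2}, $\epsilon_k=1$ exactly when $c_k\not\equiv c_{k+1}\pmod 2$. The sequence $I_k$ is nondecreasing and eventually equals $3a_n$, because by Lemma~\ref{cc0} the $c_k$ are eventually constant and equal to $a_n$. Therefore
\[
3a_n=1+2n+\#\{k:\ \epsilon_k=1\},
\]
and it suffices to show that the number $E(n)$ of steps with $\epsilon_k=1$ is $o(n)$.

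To bound $E(n)$ we follow $\Delta_k=|c_k-c_{k+1}|$ as in the proof of Lemma~\ref{cc0}. From \eqref{ee1}, $\Delta_{k+1}$ is roughly $\Delta_k/2$: precisely, $\Delta_{k+1}=\lfloor\Delta_k/2\rfloor$ or $\lceil\Delta_k/2\rceil$ according to the sign of $c_k-c_{k+1}$. Hence $\Delta_{k+1}\le\lceil \Delta_k/2\rceil$. Starting from $\Delta_1=n-1$, after $O(\log n)$ steps we reach $\Delta_k\le 2$, and the case analysis in Lemma~\ref{cc0} then gives $\Delta=0$ within a few further steps. (If $\Delta_k=2$, then $\Delta_{k+1}\le1$, after which at most two more steps are needed.) So the number of indices with $\Delta_k\neq 0$ is $O(\log n)$. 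Since $\epsilon_k=1$ forces $c_k\ne c_{k+1}$, we get $E(n)=O(\log n)$.

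Combining the two steps, $a_n=\frac{2n+1+O(\log n)}{3}$, so $a_n/n\to 2/3$. The step that needs the most care is the halving bound $\Delta_{k+1}\le\lceil\Delta_k/2\rceil$. This follows directly from \eqref{ee1}, since $|\lceil x/2\rceil|\le\lceil |x|/2\rceil$ for integers $x$. The rest is bookkeeping: $\lceil\cdot/2\rceil$ iterated $t$ times from $n-1$ is at most $\frac{n-1}{2^t}+1$, so the number of steps is at most $\log_2 n+O(1)$.
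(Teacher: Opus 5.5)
Your proof is correct, and it takes a genuinely different route from the paper.

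The paper does not analyze the recurrence directly. It imports from \seqnum{A027615} the exact formula \eqref{eq27651}, i.e.\ $3a_n=2n+1+s_{n-1}$ with $s_{n-1}$ the number of ones in the negabinary expansion of $n-1$. It then bounds $s_{n-1}$ by citing the fact that negabinary and binary lengths differ by at most three.

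You instead observe that $c_k+2c_{k+1}$ increases by $\epsilon_k\in\{0,1\}$ at each step, with $\epsilon_k=1$ exactly at parity mismatches. This gives the exact identity $3a_n=2n+1+E(n)$. You then bound $E(n)$ by the number of indices with $\Delta_k\neq 0$. That number is at most $\log_2 n+O(1)$, for three reasons:
\begin{itemize}
\item the halving estimate $\Delta_{k+1}\le\lceil\Delta_k/2\rceil$ from \eqref{ee1}, iterated using that $x\mapsto\lceil x/2\rceil$ is nondecreasing;
\item the end-game cases of Lemma~\ref{cc0};
\item the fact that $\Delta_k=0$ is absorbing, so the nonzero $\Delta_k$ form an initial segment.
\end{itemize}

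Your argument is self-contained and elementary. It uses only \eqref{mod2}, \eqref{c1894}, \eqref{ee1} and Lemma~\ref{cc0}, avoids both external citations, and still gives the same quantitative bound $3a_n-2n=O(\log n)$.

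What the paper's route buys is an exact description of the error term. Combined with \eqref{eq27651}, your identity shows $E(n)=s_{n-1}$. The paper needs this negabinary description afterwards: for the subsequence $n_t=4^t+1$ that refutes Cloitre's conjecture, and for the corollaries on \seqnum{A073504} and \seqnum{A098725}. Your upper bound on $E(n)$ alone would not deliver those finer results.
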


\begin{proof}
For $n\in\NN_0$ let $s_n$ denote the number of $1$'s in the base $-2$ expansion of $n$ (negabinary base). By a formula in \seqnum{A027615},
\begin{equation}\label{eq27651}
\frac{a_n}{n}=\frac{s_{n-1}+1}{3n}+\frac{2}{3}.
\end{equation}
By \cite[p.~209]{GT}, the number of digits in the base $-2$ representation of $n$ differs by at most three from the number of digits in its ordinary binary representation. Hence,
\[
s_n\leq \lfloor\log_2 n\rfloor+4=O(\log n).\qedhere
\]
\end{proof}

In \seqnum{A072894} Cloitre suggests that for $n>1000$,
\[\frac{1}{2} < \frac{3a_n-2n}{\log n} <\frac{3}{2}.\]
Furthermore, Cloitre asks whether 
\[\lim_{n\to\infty} \frac{3a_n-2n}{\log n}=1.\]
The statement of the following lemma refutes the conjecture and settles the question in the negative.

\begin{lemma}
For $t\in\NN$ let $n_t=4^t+1$. Then
\[
\lim_{t\to\infty}\frac{3a_{n_t}-2n_t}{\log n_t}=0.
\]
\end{lemma}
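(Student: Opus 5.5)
Using \eqref{eq27651}, we have $3a_n-2n=s_{n-1}+1$, where $s_m$ counts the $1$'s in the base $-2$ expansion of $m$. With $n_t=4^t+1$ we get $n_t-1=4^t$, so
\[
\frac{3a_{n_t}-2n_t}{\log n_t}=\frac{s_{4^t}+1}{\log(4^t+1)}.
\]
The plan is to show that $s_{4^t}$ is bounded, in fact equal to $1$, since then the limit is $0$.

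The key step is the negabinary expansion of $4^t$. Because $(-2)^{2t}=4^t$, the number $4^t$ is represented by the single digit $1$ in position $2t$ followed by zeros. Negabinary expansions with digits in $\{0,1\}$ are unique, and uniqueness is exactly what the definition of $s_n$ presupposes. Hence $s_{4^t}=1$.

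Substituting, the quotient equals $\frac{2}{\log(4^t+1)}$, which tends to $0$ as $t\to\infty$. This refutes the proposed bounds: for large $t$ we have $n_t>1000$ while the quotient drops below $\frac12$. It also answers Cloitre's question negatively, since a limit equal to $1$ would be incompatible with a subsequence tending to $0$.

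The only point needing care is the correct use of \eqref{eq27651}: the index shift $n-1$ and the identity $3a_n-2n=s_{n-1}+1$ obtained by multiplying \eqref{eq27651} by $3n$. That formula is quoted from the OEIS and assumed, as in the proof of Theorem~\ref{Cloitre3}. If one prefers to avoid relying on it, a fallback is to compute $a_{4^t+1}$ directly by iterating \eqref{c1894}. I would not attempt that unless needed.
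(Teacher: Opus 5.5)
Your proposal is correct and follows the paper's proof: you rewrite the quoted formula for $a_n/n$ as $3a_n-2n=s_{n-1}+1$, and note that $n_t-1=4^t=(-2)^{2t}$ has negabinary digit sum $s_{4^t}=1$. Hence the quotient equals $2/\log(4^t+1)\to 0$.
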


\begin{proof}
Let $t\in\NN$. Then $s_{n_t-1}=s((-2)^{2t})=1$. By \eqref{eq27651},
\[
3a_{n_t}-2n_{t}=s_{n_t-1}+1=2.\]
Thus, 
\[
\lim_{t\to\infty}\frac{3a_{n_t}-2n_t}{\log n_t}=\lim_{t\to\infty}\frac{2}{\log(4^t+1)}= 0.\qedhere
\]
\end{proof}

Let $n\in\NN$. Define an integer sequence $(u^{(n)}_k)_{k\in\NN}$ by
$u^{(n)}_1=1, u^{(n)}_2=n$, and, for $k\in\NN$,
\begin{equation}\label{largeenought}
u^{(n)}_{k+2}=\left\lfloor\frac{u^{(n)}_{k+1}}{2}\right\rfloor
+\left\lfloor\frac{u^{(n)}_{k}}{2}\right\rfloor.
\end{equation}
Set $U_n=\lim_{k\to\infty} u^{(n)}_k$. The $n$th element of the sequence \seqnum{A073504} is defined to be $U_n$. 

\begin{lemma}\label{lemUexists}
For every $n\in\NN$ the limit $\lim_{k\to\infty} u^{(n)}_k$ exists.
Thus, $U_n$ is well-defined. Furthermore, $U_n$ is even.
\end{lemma}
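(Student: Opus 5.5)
Write $u_k=u^{(n)}_k$. The plan is to show that the nonnegative integer sequence $(u_k)$ is eventually constant, and then read off the parity of the limit from the recurrence \eqref{largeenought}.

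First we check that every $u_k\ge 0$. This is immediate by induction, since $u_1=1$, $u_2=n\ge 1$, and floors of halves of nonnegative integers are nonnegative.

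Next we introduce the potential $M_k=\max(u_k,u_{k+1})$ and show it is nonincreasing. Since $\lfloor x/2\rfloor\le x/2$, we get
\[
u_{k+2}\le \tfrac{u_{k+1}+u_k}{2}\le M_k,
\]
so $M_{k+1}=\max(u_{k+1},u_{k+2})\le M_k$. A nonincreasing sequence of nonnegative integers is eventually constant, say $M_k=M$ for all $k\ge k_0$.

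The key step is to upgrade this to $u_k=M$ for all large $k$. This is the part I expect to need care. Suppose $k\ge k_0$ and one of $u_k,u_{k+1}$ is strictly less than $M$. Then $u_{k+2}\le (u_k+u_{k+1})/2<M$. Since $M_{k+1}=M$, we must have $u_{k+1}=M$. But then $M_{k+2}=\max(u_{k+2},u_{k+3})$, and
\[
u_{k+3}\le \frac{u_{k+2}+u_{k+1}}{2}<M
\]
because $u_{k+2}<M$. This gives $M_{k+2}<M$, a contradiction. Hence $u_k=u_{k+1}=M$ for all $k\ge k_0$, so the limit exists and $U_n=M$.

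Finally we establish parity. Plugging the constant value into \eqref{largeenought} gives $M=2\lfloor M/2\rfloor$, so $M$ is even. This covers the case $M=0$ as well; in fact $M=0$ is automatic, since for $M\ge1$ we have $2\lfloor M/2\rfloor\le M$ with equality only for $M$ even, and the strict inequalities above would otherwise force a decrease.
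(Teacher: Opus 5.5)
Your proof of the lemma itself is correct and essentially the paper's. You use the same potential $M_k=\max\{u_k,u_{k+1}\}$, the same monotonicity argument, and the same parity read-off $M=2\lfloor M/2\rfloor$. The key step differs only slightly. The paper assumes $u_{m+1}<M$, forces $u_m=u_{m+2}=M$, and reaches a contradiction via $\lfloor (M-1)/2\rfloor+\lfloor M/2\rfloor=M-1$. You instead go one index further and use only $\lfloor x/2\rfloor\le x/2$. Both are valid.

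Your final sentence, however, is false and should be deleted: $M=0$ is not automatic. Any even constant $2c$ is a fixed point of the recurrence, since $\lfloor 2c/2\rfloor+\lfloor 2c/2\rfloor=2c$, so nothing forces a decrease. Concretely, for $n=4$ the sequence is $1,4,2,3,2,2,2,\dots$, so $U_4=2$. Indeed, the paper later shows $U_n=\tfrac23(n-s_n)$, which is unbounded. Your justification (``equality only for $M$ even'') gives evenness of $M$, not $M=0$. The remark is not used anywhere in your proof, so removing it leaves a complete argument.
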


\begin{proof}
It follows immediately from the definition of the sequence $(u^{(n)}_k)_{k\in\NN}$ that $u_k^{(n)}\in\NN_0$, for every $k,n\in\NN$. Now let $n\in\NN$. For $k\in\NN$ set $u_k=u_k^{(n)}$ and define $M_k=\max\{u_k,u_{k+1}\}$.
We have
\[u_{k+2}
=
\left\lfloor\frac{u_{k+1}}{2}\right\rfloor
+
\left\lfloor\frac{u_k}{2}\right\rfloor
\leq
\frac{u_{k+1}+u_k}{2}
\leq M_k.
\]
Since also $u_{k+1}\leq M_k$, it follows that
\[
M_{k+1}
=
\max\{u_{k+1},u_{k+2}\}
\leq M_k.
\]
Thus, $(M_k)_{k\in\NN}$ is a nonincreasing sequence of nonnegative
integers and is therefore eventually constant. Let $M\in\NN_0$ and
$k\in\NN$ be such that $M_\ell=M$, for every $\ell\in\NN$ with $\ell\geq k$. We claim that $u_{\ell+1}=M$ for every $\ell\geq k$. Indeed, assuming otherwise, there is $m\in\NN$ with $m\geq k$ such that $u_{m+1}<M$.
Since $M_m=\max\{u_m,u_{m+1}\}=M$, necessarily $u_m=M$. Similarly, since
$M_{m+1}=\max\{u_{m+1},u_{m+2}\}=M$,
necessarily $u_{m+2}=M$. On the other hand, since $u_{m+1}\leq M-1$, we have
\[
u_{m+2}
=
\left\lfloor\frac{u_{m+1}}2\right\rfloor
+
\left\lfloor\frac{u_m}2\right\rfloor
\leq
\left\lfloor\frac{M-1}{2}\right\rfloor
+
\left\lfloor\frac{M}{2}\right\rfloor
=M-1,
\]
contradicting $u_{m+2}=M$. Thus,
$u_{\ell+1}=M$ for every $\ell\geq k$ and therefore $U_n=\lim_{k\to\infty}u_k=M$. Taking $k$ large enough in \eqref{largeenought}, we obtain
\[
U_n=2\left\lfloor\frac{U_n}2\right\rfloor.
\]
Thus, $U_n$ is even.
\end{proof}

\begin{lemma}\label{lem9894}
Let $k,n\in\NN$ with $n\geq 2$. Then
\begin{equation}\label{kbhdsf1}
c^{(n)}_{k+1}+\left\lfloor \frac{u_k^{(n-1)}}{2}\right\rfloor=n.
\end{equation}
\end{lemma}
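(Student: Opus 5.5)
We will prove \eqref{kbhdsf1} by strong induction on $k$, advancing two steps at a time. The plan is to rewrite both recurrences as recurrences in the single quantity $v_k=\lfloor u^{(n-1)}_k/2\rfloor$ and check that they coincide.

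Throughout, write $c_k=c^{(n)}_k$ and $u_k=u^{(n-1)}_k$, and set $v_k=\lfloor u_k/2\rfloor$. The claim is then $c_{k+1}=n-v_k$ for all $k\in\NN$.

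\emph{Base cases $k=1,2$.} For $k=1$ we have $c_2=n$ and $v_1=\lfloor 1/2\rfloor=0$, so the claim holds. For $k=2$, by \eqref{c1894},
\[
c_3=\left\lceil\frac{n+1}{2}\right\rceil,\qquad v_2=\left\lfloor\frac{n-1}{2}\right\rfloor .
\]
Splitting according to the parity of $n$, the sum is $\frac{n+1}{2}+\frac{n-1}{2}=n$ when $n$ is odd, and $\frac{n+2}{2}+\frac{n-2}{2}=n$ when $n$ is even.

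\emph{Inductive step.} Assume $c_{k+1}=n-v_k$ and $c_{k+2}=n-v_{k+1}$. By \eqref{c1894} and the identity $\lceil -x\rceil=-\lfloor x\rfloor$,
\[
c_{k+3}=\left\lceil\frac{c_{k+2}+c_{k+1}}{2}\right\rceil=\left\lceil n-\frac{v_{k+1}+v_k}{2}\right\rceil=n-\left\lfloor\frac{v_{k+1}+v_k}{2}\right\rfloor .
\]
On the other side, \eqref{largeenought} says precisely that $u_{k+2}=v_{k+1}+v_k$. Hence
\[
v_{k+2}=\left\lfloor\frac{v_{k+1}+v_k}{2}\right\rfloor ,
\]
and therefore $c_{k+3}=n-v_{k+2}$, which completes the induction.

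The only point needing care is the base case $k=2$, which requires the parity split; everything else is a direct substitution.
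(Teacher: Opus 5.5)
Your proof is correct and follows the paper's argument essentially verbatim: the same base cases $k=1,2$, and the same two-step induction using \eqref{c1894}, the recurrence \eqref{largeenought} (which gives $u_{k+2}=v_{k+1}+v_k$), and the identity $\lceil n-x\rceil=n-\lfloor x\rfloor$. Your explicit parity split for $k=2$ simply writes out a check that the paper leaves to the reader.
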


\begin{proof}
We proceed by induction on $k$. Using \eqref{c1894}, we have 
\begin{align*}
c^{(n)}_{2}+\left\lfloor\frac{u^{(n-1)}_1}{2}\right\rfloor
&=n+\left\lfloor\frac{1}{2}\right\rfloor
=n,\\
c^{(n)}_3+\left\lfloor\frac{u^{(n-1)}_2}{2}\right\rfloor
&=\left\lceil\frac{n+1}{2}\right\rceil+\left\lfloor\frac{n-1}{2}\right\rfloor=n.
\end{align*} Thus, the assertion holds for $k=1,2$. Assume now that the assertion holds for $k-1$ and for $k$, for some $k\in\NN$ with $k\geq 2$. Using \eqref{c1894} and the induction hypothesis, we have
\begin{align*}
c^{(n)}_{k+2}
&=\left\lceil \frac{c^{(n)}_{k+1}+c^{(n)}_k}{2}\right\rceil \\
&=\left\lceil \frac{n-\left\lfloor \frac{u_{k}^{(n-1)}}{2}\right\rfloor +n-\left\lfloor \frac{u_{k-1}^{(n-1)}}{2}\right\rfloor }{2}\right\rceil \\
&=\left\lceil n-\frac{u^{(n-1)}_{k+1}}{2}\right\rceil \\
&=n-\left\lfloor\frac{u^{(n-1)}_{k+1}}{2}\right\rfloor,
\end{align*}
completing the proof.
\end{proof}

\begin{theorem}\label{tm1894}
Let $n\in\NN$ with $n\geq 2$. Then
$a_n=n-\frac{U_{n-1}}{2}$.
\end{theorem}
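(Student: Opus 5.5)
The plan is to pass to the limit $k\to\infty$ in the identity of Lemma~\ref{lem9894}. Fix $n\geq 2$. By Lemma~\ref{lem9894}, for every $k\in\NN$ we have
\[
c^{(n)}_{k+1}=n-\left\lfloor \frac{u_k^{(n-1)}}{2}\right\rfloor .
\]
The left-hand side tends to $a_n$ as $k\to\infty$. This limit exists by Lemma~\ref{cc0}; in fact the proof there shows that $(c^{(n)}_k)_{k\in\NN}$ is eventually constant.

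For the right-hand side, Lemma~\ref{lemUexists} shows more than mere convergence: the sequence $(u^{(n-1)}_k)_{k\in\NN}$ is eventually equal to $M=U_{n-1}$. Its proof establishes $u_{\ell+1}=M$ for all $\ell\ge k$. Hence for all sufficiently large $k$ we have $\lfloor u^{(n-1)}_k/2\rfloor=\lfloor U_{n-1}/2\rfloor$. The same lemma says that $U_{n-1}$ is even, so this equals $U_{n-1}/2$.

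Choose $k$ large enough that both sequences have stabilized. Then
\[
a_n=c^{(n)}_{k+1}=n-\frac{U_{n-1}}{2},
\]
which is the claim. The only point requiring care is the interchange of the limit with the floor function. This is why I use eventual constancy (integer-valued convergent sequences) rather than continuity, since $\lfloor\cdot\rfloor$ is not continuous. The evenness of $U_{n-1}$ is what removes the floor in the final expression. I expect no genuine obstacle here: the work was already done in Lemmas~\ref{cc0}, \ref{lemUexists} and \ref{lem9894}.
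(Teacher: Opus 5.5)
Your proposal is correct and matches the paper's proof: take $k$ large enough in Lemma~\ref{lem9894} that both $(c^{(n)}_k)$ and $(u^{(n-1)}_k)$ have stabilized, which gives $a_n=n-\lfloor U_{n-1}/2\rfloor$, and then remove the floor using the evenness of $U_{n-1}$ from Lemma~\ref{lemUexists}. Your remark that eventual constancy, not continuity of $\lfloor\cdot\rfloor$, justifies passing to the limit is the same point the paper relies on.
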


\begin{proof}
Since the sequences $(c_k^{(n)})_{k\in\NN}$ and $(u_k^{(n)})_{k\in\NN}$ are eventually constant, taking $k$ large enough in \eqref{kbhdsf1} yields $a_n=n-\left\lfloor\frac{U_{n-1}}{2}\right\rfloor$. Since $U_{n-1}$ is even, the assertion follows.
\end{proof}

The statement of the following corollary was conjectured by Calder\'{o}n in \seqnum{A073504}.

\begin{corollary}
Let $n\in\NN$. Then $U_n=\frac{2}{3}(n-s_n)$.
\end{corollary}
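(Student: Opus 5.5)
The plan is to combine Theorem~\ref{tm1894} with formula~\eqref{eq27651}, which was already used in the proof of Theorem~\ref{Cloitre3}. Fix $n\in\NN$ and apply Theorem~\ref{tm1894} with $n+1\geq 2$ in place of $n$. This gives
\[
a_{n+1}=n+1-\frac{U_n}{2},\qquad\text{i.e.,}\qquad U_n=2(n+1)-2a_{n+1}.
\]
Next, rewrite \eqref{eq27651} with $n+1$ in place of $n$ and clear denominators:
\[
3a_{n+1}=s_n+1+2(n+1).
\]
Substituting $a_{n+1}=\frac{s_n+2n+3}{3}$ into the expression for $U_n$ yields
\[
U_n=2n+2-\frac{2(s_n+2n+3)}{3}=\frac{6n+6-2s_n-4n-6}{3}=\frac{2}{3}(n-s_n),
\]
which is the assertion.

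The only real point of concern is that \eqref{eq27651} is quoted from \seqnum{A027615} rather than proved in the paper. If the proof is meant to be self-contained, I would establish $3a_{n+1}=2(n+1)+s_n+1$ directly. The natural route is to prove a recursion for $U_n$ (equivalently for $a_n$) in terms of $\lfloor n/2\rfloor$ or $\lfloor n/4\rfloor$ and compare it with the negabinary digit recursion
\[
s_n=(n\bmod 2)+s_{(n-(n\bmod 2))/(-2)}.
\]
One would try to show that the map $n\mapsto u^{(n)}$ interacts with halving in a way that mirrors division by $-2$. That comparison would be the main obstacle. Since the paper has already accepted \eqref{eq27651} in Theorem~\ref{Cloitre3}, I would rely on it, and the corollary then reduces to the two-line algebra above.
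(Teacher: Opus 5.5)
Your proof is correct and is the paper's own argument: apply Theorem~\ref{tm1894} at $n+1$, take $a_{n+1}=\frac{s_n+2n+3}{3}$ from \eqref{eq27651}, and solve for $U_n$. The paper likewise relies on the OEIS formula \eqref{eq27651} without proving it, so your caveat applies equally to the published proof.
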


\begin{proof}
By Theorem \ref{tm1894}, we have $a_{n+1}=(n+1)-\frac{U_n}{2}$. By \eqref{eq27651}, $a_{n+1}=\frac{s_n+2n}{3}+1$. Equating both expressions for $a_{n+1}$ and solving for $U_n$, the assertion follows.
\end{proof}

The statement of the following corollary was conjectured in \seqnum{A098725}.

\begin{corollary} 
Let $(b_n)_{n\in\NN_0}$ be defined by \[ b_{4n}=0,\qquad b_{2n+1}=1,\qquad b_{4n+2}=b_{n+1}, \qquad n\in\NN_0. \] 
Then, for every $n\in\NN$, 
$b_n=a_{n+1}-a_n$. 
\end{corollary}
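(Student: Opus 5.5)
The plan is to reduce everything to the negabinary digit-sum formula \eqref{eq27651} and then check that the resulting expression satisfies the defining recurrences of $(b_n)$.

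\textbf{Step 1 (closed form for the differences).} Clearing denominators in \eqref{eq27651} gives $3a_n=2n+1+s_{n-1}$ for every $n\in\NN$. Subtracting consecutive instances yields
\[
a_{n+1}-a_n=\frac{s_n-s_{n-1}+2}{3},\qquad n\in\NN.
\]
Hence it suffices to show that $d_n:=\frac{s_n-s_{n-1}+2}{3}$ satisfies $d_{4m}=0$, $d_{2m+1}=1$ and $d_{4m+2}=d_{m+1}$ for all admissible $m$. Once these hold, strong induction on $n$ gives $b_n=d_n$ for all $n\in\NN$. In the case $n=4m+2$ we have $m+1<n$, so that case is covered by the induction hypothesis.

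\textbf{Step 2 (negabinary digit manipulations).} Each case uses one elementary fact: if $x=\sum_i e_i(-2)^i$ is the negabinary expansion of $x\in\NN_0$, then prepending low-order digits gives a valid expansion. For example, $(-2)^2x$ has digits $0,0,e_0,e_1,\ldots$. By uniqueness of negabinary expansions, the digit count is then additive.
\begin{itemize}
\item[(i)] $n=2m+1$ odd. We have $n=1+(-2)(-m)$ and $n-1=0+(-2)(-m)$, where $-m$ is the integer represented by the higher digits of $n-1$. This works because $n-1$ is even, so its lowest negabinary digit is $0$. The expansions of $n$ and $n-1$ therefore differ only in the last digit, so $s_n-s_{n-1}=1$ and $d_n=1$.
\item[(ii)] $n=4m$ with $m\ge1$. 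Here $4m=0+0\cdot(-2)+m\cdot(-2)^2$, so $s_{4m}=s_m$. Also $4m-1=1+1\cdot(-2)+m\cdot(-2)^2$, so $s_{4m-1}=2+s_m$. Thus $s_n-s_{n-1}=-2$ and $d_n=0$.
\item[(iii)] $n=4m+2$ with $m\ge0$. Here $4m+2=0+1\cdot(-2)+(m+1)(-2)^2$, so $s_n=1+s_{m+1}$. Also $4m+1=1+0\cdot(-2)+m(-2)^2$, so $s_{n-1}=1+s_m$. Hence $d_{4m+2}=\frac{s_{m+1}-s_m+2}{3}=d_{m+1}$.
\end{itemize}

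\textbf{Step 3 (induction).} Combining Steps 1 and 2, $b_n$ and $a_{n+1}-a_n$ obey the same recurrence. We also have $b_1=1=d_1$ and $b_2=b_1=1=d_2$, so the two sequences agree for all $n\in\NN$.

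\textbf{Main obstacle.} The delicate point is that \eqref{eq27651} is imported from \seqnum{A027615} and must be valid for all $n\ge1$, including $n=1$, where $s_0=0$ gives $a_1=1$. I would verify this small case directly from the definition. The digit identities in Step 2 require care with signs; I would confirm each by expanding the sum explicitly and invoking uniqueness of the base $-2$ representation, rather than manipulating digits informally.
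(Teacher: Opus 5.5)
Your proposal is correct and takes essentially the same route as the paper. Both derive $a_{n+1}-a_n=\frac{s_n-s_{n-1}+2}{3}$ from \eqref{eq27651} and then check the defining recurrences of $(b_n)$ via the negabinary identities $s_{4m}=s_m$, $s_{4m+1}=s_m+1$, $s_{4m+2}=s_{m+1}+1$, and $s_{4m+3}=s_{m+1}+2$ (the last one shifted to $s_{4m-1}=s_m+2$). The only cosmetic differences are that you handle all odd $n$ at once, using that the lowest digit of $n-1$ is $0$, rather than splitting into $4m+1$ and $4m+3$, and that you run the strong induction on $n\in\NN$ without introducing $d_0$.
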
 

\begin{proof} 
Let $d_0=0$ and, for $n\in\NN$ set $d_n=a_{n+1}-a_n$. By \eqref{eq27651}, \begin{equation}\label{eqqdiffer} 
d_n=\frac{s_n-s_{n-1}+2}{3}. 
\end{equation} 
We claim that, for every $m\in\NN_0$, 
\begin{align*}
s_{4m}&=s_{m},\\ s_{4m+1}&=s_{m}+1,\\ s_{4m+2}&=s_{m+1}+1,\\ s_{4m+3}&=s_{m+1}+2. \end{align*} Indeed, multiplication by $4=(-2)^2$ shifts a negabinary expansion by two positions. Furthermore, 
\begin{align*}
4m+1&=4m+1\cdot(-2)^0,\\
4m+2&=4(m+1)+1\cdot(-2)^1,\\
4m+3&=4(m+1)+1\cdot (-2)^1+1\cdot (-2)^0.
\end{align*} It follows that for $m\in\NN$, equation \eqref{eqqdiffer} yields 
\[
d_{4m} = \frac{s_{m}-(s_{m}+2)+2}{3} = 0. 
\] Similarly, for $m\in\NN_0$, 
\begin{align*}
d_{4m+1}&=\frac{s_{m}+1-s_{m}+2}{3}=1,\\
d_{4m+2}&=\frac{s_{m+1}+1-(s_{m}+1)+2}{3}=d_{m+1},\\
d_{4m+3}&=\frac{s_{m+1}+2-(s_{m+1}+1)+2}{3}=1.
\end{align*} From the identities for $d_{4m+1}$ and $d_{4m+3}$ it follows that $d_{2m+1}=1$. Thus, the sequence $(d_n)_{n\in\NN_0}$ satisfies the same recurrence as the sequence $(b_n)_{n\in\NN_0}$. Since 
$d_0=0=b_0$ and $d_1=a_2-a_1=1=b_1$, the two sequences coincide.
\end{proof}

\subsection{ \texorpdfstring{\seqnum{A078111}}{}}

For $n\in\NN$ set $x_n=\frac{(n+2)^{n+2}}{n^n}$.
Greathouse IV conjectured in \seqnum{A078111} that \[x_n = e^2\left(n^2 + 2n + \frac{2}{3} + o(1)\right).\]
It follows from the following theorem that the conjecture is true.

\begin{theorem}\label{tt12}
Let $n\in\NN$. Then \begin{equation}\label{e1111} e^2\left((n+1)^2-\frac13-\frac{2}{45n^2}\right) < x_n < e^2\left((n+1)^2-\frac13\right). \end{equation} 
\end{theorem}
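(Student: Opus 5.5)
Throughout, I would put $m=n+1\ge 2$ and $t=1/m^2\in(0,\tfrac14]$. Since $n+2=m+1$ and $n=m-1$,
\[
x_n=\frac{(m+1)^{m+1}}{(m-1)^{m-1}}=(m^2-1)\left(\frac{m+1}{m-1}\right)^{m}.
\]
The series $\log\frac{1+u}{1-u}=2\sum_{k\ge0}\frac{u^{2k+1}}{2k+1}$ with $u=1/m$ gives
\[
m\log\frac{m+1}{m-1}=2+g,\qquad g=2\sum_{k\ge1}\frac{t^k}{2k+1}.
\]
Hence $x_n=e^2(m^2-1)e^{g}$, and \eqref{e1111} becomes a comparison of $e^g$ with $\frac{m^2-1/3}{m^2-1}=\frac{1-t/3}{1-t}$. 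Taking logarithms,
\[
\log\frac{1-t/3}{1-t}=\sum_{k\ge1}\frac{1-3^{-k}}{k}\,t^k .
\]
Set $D=\log\frac{1-t/3}{1-t}-g=\sum_{k\ge1}c_kt^k$ with $c_k=\frac{1-3^{-k}}{k}-\frac{2}{2k+1}$. Then
\[
x_n=e^2\left(m^2-\tfrac13\right)e^{-D}.
\]

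\emph{Upper bound.} We have $c_1=0$. For $k\ge2$ the inequality $c_k>0$ is equivalent to $(2k+1)3^{-k}<1$, which holds; in particular $c_2=\frac{4}{9}-\frac{2}{5}=\frac{2}{45}$. So $D>0$, hence $e^{-D}<1$, which gives $x_n<e^2\left((n+1)^2-\frac13\right)$.

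\emph{Lower bound.} From $e^{-D}\ge 1-D$ it suffices to prove
\[
\left(m^2-\tfrac13\right)D<\frac{2}{45(m-1)^2}.
\]
For $k\ge3$ I would use the crude bound $c_k\le \frac1k-\frac{2}{2k+1}=\frac{1}{k(2k+1)}\le\frac1{21}$. This yields
\[
D\le \frac{2}{45}t^2+\frac{t^3}{21(1-t)}.
\]
Multiplying by $m^2-\frac13<m^2$, the left side is at most
\[
\frac{2}{45m^2}+\frac{1}{21m^2(m^2-1)}.
\]
The right side exceeds $\frac{2}{45m^2}$ by
\[
\frac{2}{45}\left(\frac1{(m-1)^2}-\frac1{m^2}\right)=\frac{2(2m-1)}{45m^2(m-1)^2}.
\]
So it remains to check
\[
\frac{1}{21(m^2-1)}<\frac{2(2m-1)}{45(m-1)^2},
\]
that is, $45(m-1)<42(2m-1)(m+1)$, which is clearly true for all $m\ge2$. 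The remaining strictness is automatic, because $e^{-D}>1-D$ when $D>0$.

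The main obstacle I expect is the lower bound: the constant $\frac{2}{45}$ in \eqref{e1111} is exactly $c_2$, so the leading term of $D$ is sharp. The tail estimate for $k\ge3$ must therefore be absorbed into the small gap between $\frac1{(m-1)^2}$ and $\frac1{m^2}$. If the crude bound $c_k\le\frac1{21}$ turned out to be too lossy for small $m$, I would instead keep the exact value $c_3$ and bound only $k\ge4$, or verify the cases $n=1,2$ numerically.
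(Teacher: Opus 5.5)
Your argument is correct, and it takes a genuinely different route from the paper's.

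\textbf{The paper's route.} It substitutes $t=2/n$ and reduces the two bounds to real-variable inequalities. For all $t>0$ it proves $(1+t)^{2/t+2}<e^2\bigl(1+t+\frac{t^2}{6}\bigr)$. For $0<t\le 2$ it proves $(1+t)^{2/t+2}>e^2\bigl(1+t+\frac{t^2}{6}-\frac{t^4}{360}\bigr)$. Each inequality is established with an auxiliary function whose derivative has a numerator ($H$, resp.\ $K$). That numerator vanishes to order $3$, resp.\ $5$, at $0$ and has a positive top derivative. This is mechanical to check, but it requires lengthy derivative computations and does not show where the constants come from.

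\textbf{Your route.} You recentre at $m=n+1$ and use the odd series for $\log\frac{1+u}{1-u}$. This lets you write $x_n=e^2\bigl(m^2-\frac13\bigr)e^{-D}$, where $D=\sum_{k\ge 2}c_k m^{-2k}$ has explicit nonnegative coefficients. The upper bound is then simply $D>0$. The lower bound follows from $e^{-D}\ge 1-D$ and a geometric tail estimate. Both checks you need hold for all $m\ge 2$: the equivalence $c_k>0\iff (2k+1)3^{-k}<1$, and the inequality $45(m-1)<42(2m-1)(m+1)$. So the fallback you mention for small $m$ is not needed.

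\textbf{What your route buys.} It avoids calculus entirely and explains the constant $\frac{2}{45}$ as the first nonzero coefficient $c_2$. It also proves more than the statement. Your estimates give
$x_n>e^2\bigl((n+1)^2-\frac13-\frac{2}{45(n+1)^2}-\frac{1}{21(n+1)^2n(n+2)}\bigr)$, and you showed this correction is smaller than $\frac{2}{45n^2}$. This indicates that the natural expansion variable is $(n+1)^{-2}$ rather than $1/n$. Further terms of the asymptotic expansion would come at no extra cost. The paper's approach, in exchange, yields inequalities valid for a continuous parameter $t$, though your series argument also works for real $m>1$.
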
 

\begin{proof}
For $t>0$, define
\[
F(t)=
\ln\left(1+t+\frac{t^2}{6}\right)
-\left(\frac{2}{t}+2\right)\ln(1+t)+2.
\]
We claim that $F(t)>0$ for $t>0$. Indeed,
\[
F'(t)
=
\frac{2H(t)}
{t^2(t^2+6t+6)},
\]
where
\[
H(t)
=
(t^2+6t+6)\ln(1+t)-3t^2-6t.
\] We have 
\begin{align*} 
H'(t) &=(2t+6)\ln(1+t)-5t-\frac{t}{1+t},\\ 
H''(t) &=2\ln(1+t)-3+\frac{4}{1+t}-\frac{1}{(1+t)^2},\\
H'''(t) &=\frac{2t^2}{(1+t)^3}. 
\end{align*}
Thus,
$H(0)=H'(0)=H''(0)=0$ and 
$H'''(t)>0$ for $t>0$. It follows successively that $H(t)>0$ for every $t>0$ and therefore $F'(t)>0$. Since $F$ extends continuously to $t=0$ with $F(0)=0$, we conclude that $F(t)>0$ for every $t>0$. Therefore,
\[
\ln\left(1+t+\frac{t^2}{6}\right)
>
\left(\frac{2}{t}+2\right)\ln(1+t)-2.
\]
Exponentiating gives
\[
(1+t)^{2/t+2}<e^2\left(1+t+\frac{t^2}{6}\right).
\]
Substituting $t=\frac{2}{n}$ and multiplying by $n^2$, we obtain
\[
x_n<e^2\left(n^2+2n+\frac23\right)=e^2\left((n+1)^2-\frac13\right).
\]

We now prove the lower bound. For $0<t\le2$, define
\[
J(t)=
\left(\frac{2}{t}+2\right)\ln(1+t)-2
-\ln\left(1+t+\frac{t^2}{6}-\frac{t^4}{360}\right).
\]
Notice that 
\[1+t+\frac{t^{2}}{6}-\frac{t^{4}}{360}\geq1+t+\left(\frac{1}{6}-\frac{4}{360}\right)t^{2}=1+t+\frac{7t^{2}}{45}>0.\] Thus, the logarithm in $J(t)$ is well defined. A direct
calculation gives
\[
J'(t)
=
\frac{2K(t)}
{t^2(360+360t+60t^2-t^4)},
\]
where
\[ K(t) = t^5+180t^2+360t + (t^4-60t^2-360t-360)\ln(1+t). \]
We have
\begin{align*}
K'(t)
&=
\frac{5t^5+6t^4+300t^2+360t+4(t^4+t^3-30t^2-120t-90)\ln(1+t)
}{(1+t)},\\
K''(t)
&=
\frac{20t^5+47t^4+28t^3+180t^2+120t+12(t^4+2t^3-9t^2-20t-10)\ln(1+t)}{(1+t)^2},\\
K'''(t)
&=
\frac{2t(30t^4+103t^3+120t^2-12t+12(1+t)^3\ln(1+t))
}{(1+t)^3},\\
K^{(4)}(t)
&=
\frac{
2(60t^5+265t^4+448t^3+408t^2-12t+12(1+t)^4\ln(1+t))
}{(1+t)^4},\\
K^{(5)}(t)
&=
\frac{24t(5t^4+26t^3+55t^2+50t+75)}{(1+t)^5}.
\end{align*} Thus,
$K^{(j)}(0)=0$ for $j=0,1,2,3,4$, and $K^{(5)}(t)>0$ for every $t>0$. It follows successively that 
$K(t)>0$, for every $t>0$. Furthermore, for $0<t\leq 2$ we have
\[
360+360t+60t^{2}-t^{4}\geq360+360t+56t^{2}>0.
\]
It follows that $J'(t)>0$ for $0<t\leq 2$. Since $J$ extends continuously
to $t=0$ with $J(0)=0$, it follows that $J(t)>0$
for every $0<t\le2$. Consequently,
\[
\left(\frac{2}{t}+2\right)\ln(1+t)-2>\ln\left(1+t+\frac{t^2}{6}-\frac{t^4}{360}\right).
\]
Exponentiating, substituting $t=\frac{2}{n}$, and multiplying by $e^2n^2$, we obtain
\[
x_n>e^2\left(n^2+2n+\frac23-\frac{2}{45n^2}\right)
=e^2\left((n+1)^2-\frac13-\frac{2}{45n^2}\right).\qedhere
\]
\end{proof}

\begin{remark}
A stronger conjecture recorded in \seqnum{A078111} asserts that for $n$ large enough,
\begin{equation}\label{eqqqn2}
\left\lfloor \frac{(n+2)^{n+2}}{n^n}\right\rfloor
=
\left\lfloor
e^2\left((n+1)^2-\frac13\right)
\right\rfloor.
\end{equation}
Theorem \ref{tt12} does not establish this stronger statement. Indeed, if
\[
y_n=e^2\left((n+1)^2-\frac13\right),
\]
then \eqref{e1111} gives
\[
0<y_n-x_n<\frac{2e^2}{45n^2}.
\]
Thus, the equality in \eqref{eqqqn2} would follow from the additional estimate
\begin{equation}\label{yn2}
\{y_n\}>\frac{2e^2}{45n^2},
\end{equation}
where $\{y\}$ is the fractional part of $y$. We have verified that \eqref{yn2} holds for every $3\leq n\leq 10^8$ and that \eqref{eqqqn2} holds for every $1\leq n\leq 10^8$.
\end{remark}

\subsection{\texorpdfstring{\seqnum{A102309}}{}}

The statement of Theorem \ref{Steph01} below was conjectured by Stephan in \seqnum{A102309}. We shall use the following well-known result (e.g., \cite[Theorem 6.6]{B}). 
\begin{lemma}\label{muu1}
Let $n\in\NN$. Then 
\[\sum_{d\mid n} \mu(d) = 
\mathbf{1}_{n=1}.\]
\end{lemma}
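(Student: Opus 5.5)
We prove the statement of Lemma~\ref{muu1}, the classical fact that the M\"obius function sums to the indicator of $n=1$ over divisors.

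The case $n=1$ is immediate. The only divisor is $d=1$, and $\mu(1)=1$.

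For $n\ge 2$, write the prime factorization $n=p_1^{e_1}\cdots p_r^{e_r}$ with $r\ge 1$ distinct primes $p_i$ and $e_i\ge 1$. By definition, $\mu(d)=0$ whenever $d$ is divisible by the square of a prime. Hence only the squarefree divisors of $n$ contribute to the sum. These divisors are exactly the products $\prod_{i\in I}p_i$ for subsets $I\subseteq[r]$, and different subsets give different divisors. For such a divisor, $\mu\bigl(\prod_{i\in I}p_i\bigr)=(-1)^{\#I}$. Grouping the subsets by their size $k=\#I$, we obtain
\[
\sum_{d\mid n}\mu(d)=\sum_{I\subseteq[r]}(-1)^{\#I}=\sum_{k=0}^{r}\binom{r}{k}(-1)^k=(1-1)^r=0,
\]
where the third equality is the binomial theorem and the last uses $r\ge 1$. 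This gives the assertion.

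An alternative route is through multiplicativity. The function $f(n)=\sum_{d\mid n}\mu(d)$ is multiplicative, since $\mu$ is multiplicative and divisor sums preserve multiplicativity. It therefore suffices to evaluate $f$ on prime powers: for $e\ge1$,
\[
f(p^e)=\mu(1)+\mu(p)=1-1=0.
\]
The first argument is more self-contained, so we follow it. No step is a real obstacle. The only point needing care is that the correspondence between subsets $I$ and squarefree divisors is a bijection, which follows from uniqueness of prime factorization.
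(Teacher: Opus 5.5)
Your proof is correct. The paper itself gives no proof of this lemma: it quotes it as a well-known fact with a textbook citation.

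The standard textbook argument behind that citation is essentially your ``alternative route''. One shows that $n\mapsto\sum_{d\mid n}\mu(d)$ is multiplicative, using the multiplicativity of $\mu$ together with the general fact that divisor sums of multiplicative functions are multiplicative. One then evaluates it on prime powers as $\mu(1)+\mu(p)=0$.

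Your main argument is different. You discard the non-squarefree divisors, identify the squarefree ones with subsets of the set of distinct prime factors, and finish with $(1-1)^r=0$. This needs only the definition of $\mu$ and unique factorization, so it avoids the multiplicativity machinery. The price is the explicit subset--divisor bijection, which you justify correctly. The case $I=\emptyset$ correctly accounts for $d=1$ with $\mu(1)=(-1)^0$.
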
 

\begin{theorem}\label{Steph01}
For $n\in\NN$ let
\begin{align*}
a_n& = \sum_{d\mid n} \mu(d)\binom{\frac{n}{d}}{2},\\ 
b_n& = \#\{(i,j,k) \in [n]^3\;:\; i\leq j < k \textnormal{ and } \gcd(i,j,k) = 1\}.
\end{align*}
Then, for $n\geq 2$ we have $a_{n}=b_n-b_{n-1}$.
\end{theorem}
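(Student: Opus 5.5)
The plan is to identify $b_n-b_{n-1}$ as a count of triples with largest entry exactly $n$, and then evaluate that count by M\"obius inversion.

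\textbf{Reduction.} Since $[n-1]^3\subseteq[n]^3$, the difference $b_n-b_{n-1}$ counts the triples $(i,j,k)$ with $i\le j<k$, $\gcd(i,j,k)=1$ and $k=n$. Indeed, $k$ is the largest entry, so a triple lies in $[n]^3$ but not in $[n-1]^3$ exactly when $k=n$. Hence
\[
b_n-b_{n-1}=\#\{(i,j)\;:\;1\le i\le j\le n-1,\ \gcd(i,j,n)=1\}.
\]

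\textbf{M\"obius inversion.} By Lemma~\ref{muu1},
\[
\mathbf{1}_{\gcd(i,j,n)=1}=\sum_{d\mid \gcd(i,j,n)}\mu(d).
\]
I will sum this over all pairs $1\le i\le j\le n-1$ and interchange the order of summation. This gives
\[
b_n-b_{n-1}=\sum_{d\mid n}\mu(d)\,N_d,
\]
where $N_d$ is the number of pairs $i\le j$ in $[n-1]$ with $d\mid i$ and $d\mid j$.

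\textbf{Computing $N_d$.} Write $i=di'$ and $j=dj'$ with $1\le i'\le j'\le \frac{n}{d}-1$. This step uses $d\mid n$: the multiples of $d$ lying in $[n-1]$ are exactly $d,2d,\ldots,(m-1)d$, where $m=n/d$. The number of pairs $i'\le j'$ in $[m-1]$ is the number of multisets of size $2$ from an $(m-1)$-element set, namely $\binom{m}{2}$. Substituting $N_d=\binom{n/d}{2}$ gives exactly $a_n$. The same computation handles $d=n$, where $N_n=0=\binom{1}{2}$.

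\textbf{Main obstacle.} The only delicate point is the index bookkeeping: the condition is $j<k$ rather than $j\le k$, and $i\le j$ rather than $i<j$. Together these make the pair count $\binom{m}{2}$ rather than $\binom{m+1}{2}$ or $\binom{m-1}{2}$. The requirement $n\ge2$ only ensures that $b_{n-1}$ is defined, with the convention $[1]^3$ for $n=2$. For this case the formula can be checked directly: $a_2=1-0=1$, and the single triple $(1,1,2)$ gives $b_2-b_1=1-0=1$.
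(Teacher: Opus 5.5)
Your proposal is correct and follows essentially the same route as the paper's proof. Both restrict to triples with $k=n$, expand $\mathbf{1}_{\gcd(i,j,n)=1}$ via Lemma~\ref{muu1}, interchange the sums over $d\mid n$, and count the pairs $r\le s\le \frac{n}{d}-1$ as $\binom{n/d}{2}$.
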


\begin{proof}
Using Lemma \ref{muu1}, 
\begin{align*}
b_{n}-b_{n-1}
&=
\#\{(i,j,k)\in[n]^3\;:k=n,\ i\leq j<n, \textnormal{ and } \gcd(i,j,n)=1\}\\
&=\sum_{1\leq i\leq j<n}\mathbf{1}_{\gcd(i,j,n)=1}\\
&=\sum_{1\leq i\leq j<n}\sum_{d\mid \gcd(i,j,n)}\mu(d)\\
&=\sum_{d\mid n}\mu(d)
\#\{(i,j)\in[n]^2\;:\; i\leq j<n\textnormal{ and } d\mid \gcd(i,j)\}.
\end{align*} 
Let $d\mid n$ and write
$i=dr,j=ds$, for some $r,s\in\NN$. Then
\[
i\leq j<n \textnormal{ and } d\mid \gcd(i,j) \iff 
r\leq s<\frac{n}{d}\iff 
r\leq s\leq \frac{n}{d}-1.\] Thus,
\begin{align*}
b_{n}-b_{n-1}&=\sum_{d\mid n}\mu(d)
\#\left\{(r,s)\in\left[\frac{n}{d}-1\right]^2\;:\; r\leq s\right\}\\
&=\sum_{d\mid n}\mu(d)
\binom{\left(\frac{n}{d}-1\right)+1}{2}\\
&=a_{n}.\qedhere 
\end{align*} 
\end{proof}

\subsection{\texorpdfstring{\seqnum{A111386}}{}}

Let $x,y \in\NN$. We let $[x\circ y]$ denote the base-$10$ concatenation of $x$ and $y$, i.e., 
\[
[x\circ y]=x\cdot 10^{\digitlen{y}}+y.
\] Let $(a_n)_{n\in\NN}$ be the sequence defined as follows: Set $a_1=1, a_2=3$, and, for $n\in\NN$ with $n \ge 3$, let $a_n$ be the smallest odd positive integer not among
$\{a_1,\dots,a_{n-1}\}$ such that $a_{n-1} \mid [a_{n-2}\circ a_n]$.  The statement of the following theorem was conjectured by Brockhaus in \seqnum{A111386}.

\begin{theorem}
Let $k\in\NN$. Then
$a_{2k-1}=5^{k-1}$ and $a_{2k}=3\cdot 5^{k-1}$.
\end{theorem}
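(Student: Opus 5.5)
We prove both formulas together by induction on $k$, showing that if $a_{2k-1}=5^{k-1}$ and $a_{2k}=3\cdot 5^{k-1}$, then $a_{2k+1}=5^k$ and $a_{2k+2}=3\cdot 5^k$. The base case $k=1$ is the definition $a_1=1$, $a_2=3$. Throughout the inductive step we use the following description of the used terms: by the induction hypothesis, $\{a_1,\dots,a_{2k}\}=\{5^j,\,3\cdot 5^j : 0\le j\le k-1\}$. We also use that $\digitlen{x}\ge 1$ for every $x\in\NN$, so $10^{\digitlen{x}}$ is divisible by both $2$ and $5$.

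\emph{Odd step.} Here $a_{2k+1}$ is the smallest odd unused $x$ with $3\cdot 5^{k-1}\mid [5^{k-1}\circ x]=5^{k-1}\cdot 10^{\digitlen{x}}+x$.
\begin{enumerate}
\item The condition forces $5^{k-1}\mid x$, so we may write $x=5^{k-1}y$ with $y$ odd.
\item The condition then becomes $3\mid 5^{k-1}(10^{\digitlen{x}}+y)$. Since $\gcd(3,5)=1$ and $10\equiv 1\pmod 3$, this is equivalent to $y\equiv 2\pmod 3$.
\item The odd candidates $x<5^k$ divisible by $5^{k-1}$ are $y=1$ and $y=3$, that is, $x=5^{k-1}$ and $x=3\cdot5^{k-1}$. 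Both are already used, and neither has $y\equiv2\pmod3$ anyway.
\item The value $y=5$ satisfies $5\equiv 2\pmod 3$, and $x=5^k$ is unused.
\end{enumerate}
Hence $a_{2k+1}=5^k$. For $k=1$ the divisibility by $5^0$ is vacuous, and the same argument gives $a_3=5$.

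\emph{Even step.} Here $a_{2k+2}$ is the smallest odd unused $x$ with $5^k\mid 3\cdot 5^{k-1}\cdot 10^{\digitlen{x}}+x$.
\begin{enumerate}
\item Since $\digitlen{x}\ge1$, the first summand is divisible by $5^k$. So the condition is exactly $5^k\mid x$.
\item The multiples of $5^k$ in increasing order are $5^k$, $2\cdot 5^k$, $3\cdot5^k$.
\item The first is now used, since $a_{2k+1}=5^k$, and the second is even.
\item The third, $3\cdot5^k$, is odd, unused, and satisfies the condition.
\end{enumerate}
Hence $a_{2k+2}=3\cdot 5^k$, which completes the induction.

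No step is a genuine obstacle. The only points needing care are the bookkeeping of which numbers are already used, and the observation $\digitlen{x}\ge1$, which guarantees the extra factor of $5$ in the even step.
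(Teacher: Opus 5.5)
Your proof is correct and matches the paper's argument step for step. You induct on $k$, use the divisibility condition to force $5^{k-1}\mid a_{2k+1}$ (respectively $5^k\mid a_{2k+2}$, via $\digitlen{x}\ge 1$), discard the smaller odd multiples as already used, and check that $5^k$ (respectively $3\cdot 5^k$) works. One small point: your description of $\{a_1,\dots,a_{2k}\}$ uses all earlier cases, not just $k$, so the induction should be stated as strong (cumulative) induction, which is how the paper phrases it.
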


\begin{proof}
We prove the assertion by strong induction on $k\in\NN$. We have $a_1=1=5^{0}$ and $a_2=3=3\cdot 5^{0}$. Thus, the assertion holds for $k=1$. Assume that the assertion holds for every $j\in[k]$ for some $k\in\NN$. By the induction hypothesis, $a_{2k-1}=5^{k-1}$ and $a_{2k}=3\cdot 5^{k-1}$. Thus, $a_{2k+1}$ is the smallest odd positive integer not among $\{a_1,\dots,a_{2k}\}$ such that
\[3\cdot 5^{k-1} \mid (5^{k-1}\cdot 10^{\digitlen{a_{2k+1}}}+a_{2k+1}).\] In particular, $5^{k-1}\mid a_{2k+1}$. Thus, $a_{2k+1}=m\cdot 5^{k-1}$ for some odd $m\in\NN$. Since $a_{2k-1}=1\cdot 5^{k-1}$ and $a_{2k}=3\cdot 5^{k-1}$, necessarily $a_{2k+1}\ge 5\cdot 5^{k-1}=5^{k}$. We show that $a_{2k+1}=5^k$. First, $5^k>3\cdot 5^{k-1}= \max\{a_1,\ldots,a_{2k}\}$.
Second, \[
[5^{k-1}\circ 5^{k}]
=5^{k-1}\cdot 10^{\digitlen{5^{k}}}+5^{k}
=5^{k-1}(10^{\digitlen{5^{k}}}+5).
\] Thus, $5^{k-1}\mid [5^{k-1}\circ 5^{k}]$. Moreover, $ 10^{\digitlen{5^k}}+5\equiv 1+2\equiv 0\pmod 3$. Consequently, 
\[3\cdot5^{k-1}\mid 5^{k-1}\left(10^{\digitlen{5^k}}+5\right) =[5^{k-1}\circ5^k].
\] Hence, all the defining conditions are fulfilled, and therefore $a_{2k+1}=5^k$.

We now consider $a_{2k+2}$. By the induction hypothesis and the previous part, $a_{2k}=3\cdot5^{k-1}$ and $a_{2k+1}=5^k$. Thus, $a_{2k+2}$ is the smallest odd positive integer not among $\{a_1,\dots,a_{2k+1}\}$ such that 
\[
5^{k}\mid (3\cdot 5^{k-1}\cdot 10^{\digitlen{a_{2k+2}}}+a_{2k+2}).
\]
Since $\digitlen{a_{2k+2}}\ge 1$, we have $5^k \mid 3\cdot 5^{k-1}\cdot 10^{\digitlen{a_{2k+2}}}$. Thus,
$5^k\mid a_{2k+2}$ and therefore $a_{2k+2}=m\cdot 5^{k}$ for some odd $m\in\NN$.
Since $a_{2k+1}=1\cdot 5^{k}$, necessarily $a_{2k+2}\ge 3\cdot 5^{k}$. We show that $a_{2k+2}=3\cdot5^k$. First, $3\cdot 5^k> 5^{k}= \max\{a_1,\ldots,a_{2k+1}\}$. Second
\[
[3\cdot5^{k-1}\circ3\cdot5^{k}]=3\cdot5^{k-1}\cdot10^{\digitlen{3\cdot5^{k}}}+3\cdot5^{k}=3\cdot5^{k-1}(10^{\digitlen{3\cdot5^{k}}}+5).\]
Since $\digitlen{3\cdot 5^k}\geq 1$, we have $5\mid (10^{\digitlen{3\cdot 5^{k}}}+5)$. Thus, $5^k\mid [3\cdot 5^{k-1}\circ 3\cdot 5^{k}]$. Hence, all conditions are fulfilled and therefore $a_{2k+2}=3\cdot 5^{k}$. This proves the assertion for $k+1$ and the induction is complete.
\end{proof}

\subsection{\texorpdfstring{\seqnum{A116098} and \seqnum{A116129}}{}}

Sequence \seqnum{A116098} consists of the integers $k\geq 10$ for
which $[k\circ(k-9)]$ can be expressed as the product of two positive
integers differing by $6$. As observed by Israel, this is equivalent
to requiring that $k(10^{\digitlen{k-9}}+1)$ be a square. Indeed, if the two factors are $m$ and $m+6$, then
\[
[k\circ(k-9)]
=k\cdot 10^{\digitlen{k-9}}+k-9
=m(m+6).
\]
Thus,
\[
k(10^{\digitlen{k-9}}+1)=m(m+6)+9=(m+3)^2.
\]
The converse follows by reversing this argument.

Similarly, sequence \seqnum{A116129} consists of the integers $k\geq 5$
for which $[k\circ(k-4)]$ can be expressed as the product of two
positive integers differing by $4$. Israel observed that this is equivalent to requiring that
$k(10^{\digitlen{k-4}}+1)$ be a square. The proof is similar to the one given above. The following theorem was conjectured by Israel in \seqnum{A116098} and \seqnum{A116129}.

\begin{theorem}
The sequences \seqnum{A116098} and \seqnum{A116129} coincide.
\end{theorem}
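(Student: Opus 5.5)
Using Israel's reformulations above, $k\geq 10$ lies in \seqnum{A116098} iff $k(10^{\digitlen{k-9}}+1)$ is a square, and $k\geq 5$ lies in \seqnum{A116129} iff $k(10^{\digitlen{k-4}}+1)$ is a square. Since the multiplier $10^{d}+1$ is the same in both conditions, the plan is to show two things. First, for every $k$ in either sequence, $\digitlen{k-9}=\digitlen{k-4}$. Second, no $k$ with $5\le k\le 9$ lies in \seqnum{A116129}. Then the two square conditions agree on their common range, and the ranges also agree.

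The key step is a size estimate. Suppose $k(10^d+1)=s^2$ with $d=\digitlen{k-9}$ (respectively $d=\digitlen{k-4}$). Let $q$ be the squarefree part of $10^d+1$. Then $q\mid k$, so $k\ge q$.

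\emph{Case $d$ odd.} Then $11\mid 10^d+1$. One expects $q$ to be large; one would like $q\ge (10^d+1)/(\text{something small})$, but in general squarefree parts can be small. So we argue more carefully. We have $k<10^d+9$, and $k(10^d+1)$ is a square, so $k=q t^2$. The digit-length transition is the only place where $\digitlen{k-9}\neq\digitlen{k-4}$, namely $k-9<10^{d'}\le k-4$ for some $d'$, i.e.\ $k\in\{10^{d'}+4,\dots,10^{d'}+8\}$. So it suffices to show that for $k=10^{d'}+j$ with $4\le j\le 8$, neither $k(10^{d'}+1)$ nor $k(10^{d'+1}+1)$ is a square.

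\emph{Excluding the transition values.} For the product $(10^{d'}+j)(10^{d'}+1)$, the gcd of the factors divides $j-1\in\{3,\dots,7\}$. So each factor is a square times a divisor of $j-1$ (up to the common part), and both factors are within $j-1$ of each other. Two numbers $x<y$ with $y-x\le 7$ and $xy$ a square, say $x=g a^2$, $y=g b^2$ with $g\mid 7!$-type small, force $g(b^2-a^2)\le 7$ with $b>a$. Then $g(2a+1)\le 7$, so $a\le 3$, which is impossible for $x\ge 10$. For the product $(10^{d'}+j)(10^{d'+1}+1)$, I would use congruences instead: reduce mod small moduli (mod $9$, $11$, $4$, $3$, $7$) with $10^{d'}\equiv$ a known residue depending on the parity of $d'$ and on $d'\bmod 6$, and show a non-residue always arises. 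This is the step I expect to be the main obstacle, since it needs case work over $j$ and $d'\bmod 6$. If congruences fail in some class, fall back on the gcd argument: $\gcd(10^{d'}+j,10^{d'+1}+1)$ divides $10j-1$. So $10^{d'+1}+1=g b^2$ and $10^{d'}+j=g a^2$ with $g\mid 10j-1$. Since $10(10^{d'}+j)-(10^{d'+1}+1)=10j-1$, this gives the Pell-type equation $g(10a^2-b^2)=10j-1$, and then congruences on the finitely many small $g$ should finish it.

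Finally, check $5\le k\le 9$ directly. Here $\digitlen{k-4}=1$, so the condition is that $11k$ is a square, which fails for these $k$. Combining the three steps, the two membership conditions coincide for all $k\ge 10$, and \seqnum{A116129} has no smaller terms. Hence the sequences coincide.
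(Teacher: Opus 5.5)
Your reduction is sound and is the paper's two inclusions in another form. For $k\ge 10$ the lengths $\digitlen{k-9}$ and $\digitlen{k-4}$ differ only at $k=10^{d'}+j$ with $4\le j\le 8$, and there you must show that neither $k(10^{d'}+1)$ nor $k(10^{d'+1}+1)$ is a square. Your treatment of the first product is correct and is a tidy alternative to the paper's squeezing between consecutive squares: if $xy$ is a square with $0<y-x\le 7$, then $x=ga^2$, $y=gb^2$ with $b>a$ gives $g(2a+1)\le 7$, hence $x\le 9$.

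The gap is the second product. It carries the real content of the theorem, and you leave it at ``should finish it''. Both tools you name for it fail as stated.
\begin{enumerate}
\item Take $j=4$ and $d'\equiv 2\pmod 6$. Then $1001=7\cdot 11\cdot 13$ divides $10^{d'+1}+1$, and $(10^{d'}+4)(10^{d'+1}+1)$ is $\equiv 0$ modulo $4,7,11$ and $\equiv 1$ modulo $3$ and $9$. So no non-residue appears among your moduli. Already for $k=104$ one has $104\cdot 1001=2^3\cdot 7\cdot 11\cdot 13^2$.
\item The fallback eliminates to the Pell-type equation $g(10a^2-b^2)=39$, which discards exactly the information you need. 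For $g=1$ the equation $10a^2-b^2=39$ has the integer solution $(a,b)=(2,1)$. For $g=39$ the equation $10a^2-b^2=1$ has $(1,3)$. So no congruence on these equations can exclude these two cases.
\end{enumerate}

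The missing step is to apply congruences to the factor equations themselves. Since $10^{d'+1}+1\equiv 2\pmod 3$, the equation $10^{d'+1}+1=gb^2$ forces $g\equiv 2\pmod 3$. But every divisor of $39$ is $\equiv 0$ or $1\pmod 3$, so this disposes of $j=4$ for all $d'\ge 1$ in one line. The paper does the analogous thing. It first uses $x^2\equiv t\pmod{10^{d-1}}$ (squares modulo $10$, $4$, $100$) to exclude $t=5,6,7,8$. For $t=4$ it computes $\gcd(A+4,10A+1)=\gcd(A+4,39)\in\{1,13\}$, then rules out $A+4=u^2$ modulo $3$ and $A+4=13u^2$ modulo $5$.

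You also need to actually verify $j=5,\dots,8$ rather than assert it. For example:
\begin{itemize}
\item modulo $9$ for $j=5$;
\item modulo $4$ for $j=6,7$ when $d'\ge 2$;
\item modulo $5$ for $j=8$;
\item primality of $101$ for $d'=1$.
\end{itemize}
Your opening discussion of squarefree parts for odd $d$ is never used and can be dropped.
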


\begin{proof}
``$\seqnum{A116098}\subseteq \seqnum{A116129}$". Let $k\in \seqnum{A116098}$ and set $d=\digitlen{k-9}$. Then $k(10^d+1)$ is a square. To see that $k\in \seqnum{A116129}$, it suffices to show that $\digitlen{k-4}=d$. Clearly, $\digitlen{k-4}\in\{d,d+1\}$. Assume that $\digitlen{k-4}=d+1$. Then $k=10^d+t$ with $t\in\{4,5,6,7,8\}$. Thus,
\begin{equation}\label{el98}
k(10^d+1)=10^{2d}+(t+1)10^d+t.
\end{equation} But for each value of $t$, the number in \eqref{el98} lies strictly between consecutive squares. Indeed,
\begin{itemize}
\item if $t=4$, then $(10^d+2)^2 < 10^{2d}+5\cdot 10^d+4 < (10^d+3)^2$;
\item if $t=5$, then $(10^d+2)^2 < 10^{2d}+6\cdot 10^d+5 < (10^d+3)^2$;
\item if $t=6$, then $(10^d+3)^2 < 10^{2d}+7\cdot 10^d+6 < (10^d+4)^2$;
\item if $t=7$, then $(10^d+3)^2 < 10^{2d}+8\cdot 10^d+7 < (10^d+4)^2$;
\item if $t=8$, then $(10^d+4)^2 < 10^{2d}+9\cdot 10^d+8 < (10^d+5)^2$.
\end{itemize} Thus, $k(10^d+1)$ cannot be a square, a contradiction. Hence $\digitlen{k-4}=d$ and therefore
$k\in\seqnum{A116129}$.

``$\seqnum{A116129}\subseteq \seqnum{A116098}$". Assume that $k\in \seqnum{A116129}$ and set $d=\digitlen{k-4}$. Then $k(10^d+1)=x^2$ is a square. To see that $k\in\seqnum{A116098}$, it suffices to show that $\digitlen{k-9}=d$. We distinguish between three cases.
\begin{enumerate}
\item $d=1$. Then $5\leq k\leq 13$ and 
$k(10^1+1)=11k$ is a square. We have $55\leq 11k\leq 143$ and the only square in this interval, which is divisible by $11$, is $121=11^2$, forcing $k=11$. Then $\digitlen{k-9}=\digitlen{2}=1=d$. 
\item $d=2$. Then $14\leq k\leq 103$ and
$k(10^2+1)=101k$ is a square. We have $1414\leq 101k\leq 10403$ and the only square in this interval, which is divisible by $101$, is $10201=101^2$, forcing $k=101$. Then $\digitlen{k-9}=\digitlen{92}=2=d$.
\item $d\ge 3$. Clearly, $\digitlen{k-9}\in\{d,d-1\}$. Assume that $\digitlen{k-9}=d-1$. Then $k=10^{d-1}+t$ with $t\in\{4,5,6,7,8\}$. Let $x\in\NN$ such that $k(10^d+1)=x^2$ and set $A=10^{d-1}$. Notice that $100\mid A$. We have
\[
x^2 \equiv k(10^d+1) \equiv t \pmod{A}.
\] In particular, \[x^2 \equiv t\pmod{10}, \qquad x^2 \equiv t\pmod{4}, \qquad x^2 \equiv t\pmod{100}.\] Since the set of quadratic residues mod $10$ is $\{0,1,4,5,6,9\}$, necessarily $t\neq 7,8$. Since the set of quadratic residues mod $4$ is $\{0,1\}$, necessarily
$t\neq 6$. Since $5$ is not a quadratic residue mod $100$, necessarily $t\neq 5$. It follows that $t=4$ and therefore $k=A+4$. Thus, 
\begin{equation}\label{ka98}
x^2=k(10^d+1)=(A+4)(10A+1).
\end{equation}
Set $g=\gcd(A+4,10A+1)$. Then \[
g=\gcd(A+4,10(A+4)-39)=\gcd(A+4,39).
\]
Since $A\equiv 1\pmod{3}$, we have  $3\nmid (A+4)$ and therefore
$g\in\{1,13\}$. If $g=1$, then \eqref{ka98} implies that $A+4$ is a square. But $A+4\equiv 2\pmod{3}$ and $2$ is not a quadratic residue mod $3$. On the other hand, if $g=13$, write $A+4=13u_1$ and $10A+1=13v_1$ for some $u_1,v_1\in\NN$ with $\gcd(u_1,v_1)=1$. By \eqref{ka98}, $u_1v_1=(\frac{x}{13})^2$ is a square and therefore $u_1$ is a square as well. Now, $13u_1\equiv A+4\equiv  4\pmod{5}$ and therefore $u_1\equiv  3\pmod{5}$. But $3$ is not a quadratic residue mod $5$. This contradiction shows that $\digitlen{k-9}\neq d-1$. Thus, $\digitlen{k-9}=d$ and we are done. \qedhere
\end{enumerate}
\end{proof}

\subsection{\texorpdfstring{\seqnum{A135418}}{}}

For $n\in\NN$ let $a_{n}$ denote the number of ways to partition the set $[16]$ into two subsets such that sums of the $n$-th powers in the two subsets are equal. The statement of the following theorem was conjectured by Seidov in \seqnum{A135418}.

\begin{theorem}
Let $n\in\NN$ with $n\geq 5$. Then $a_{n}=0$.
\end{theorem}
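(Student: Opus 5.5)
The plan splits into a dominance argument for large $n$ and a finite verification for $5\le n\le 10$. Throughout, put $S_n=\sum_{i=1}^{16} i^n$. A partition counted by $a_n$ exists if and only if there is a subset $B\subseteq[16]$ with $16\in B$ and $\sum_{i\in B} i^n = S_n/2$. Fixing $16\in B$ removes the swap symmetry.

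\emph{Large $n$.} If $16^n>\sum_{i=1}^{15} i^n$, then the block containing $16$ has strictly larger power sum than its complement, so $a_n=0$. This inequality is equivalent to
\[
\sum_{i=1}^{15}\left(\frac{i}{16}\right)^n<1 .
\]
The left-hand side is decreasing in $n$, so it suffices to check one value.
\begin{itemize}
\item For $n=11$ the terms are approximately $0.491,\,0.230,\,0.102,\,0.042,\,0.016,\,0.006$, and the remaining terms add to less than $0.005$.
\item Hence the sum is about $0.89<1$.
\end{itemize}
I would make this rigorous with explicit rational upper bounds for the first few terms. For the tail I would use $\sum_{i\le 9}(i/16)^{11}\le 9\,(9/16)^{11}$. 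This settles every $n\ge 11$.

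\emph{The cases $5\le n\le 10$.} For $n=10$ the sum above is slightly larger than $1$, so pure dominance fails for these six values. Here I would prove $a_n=0$ by exhaustively checking, for each such $n$, the $2^{15}$ subsets $B\ni 16$. This is a routine finite computation. To make it checkable by hand, I would first try a branch-and-bound on the largest elements. Place $16\in B$, then decide the memberships of $15,14,\dots$ in turn. At each node compare the current difference $D$ with the total $R$ of the remaining powers, and prune whenever $|D|>R$. Also prune whenever $D\not\equiv R\pmod m$ for a small modulus $m$ such as $3$, $5$ or $16$. Such a congruence is necessary because the remaining elements must be split so that they cancel $D$. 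With Fermat's little theorem, residues of $i^n$ modulo $3$ and $5$ depend only on $n$ modulo $2$ and modulo $4$. Similarly, residues modulo $16$ stabilise for $n\ge 4$. So the pruning rules depend on $n$ only through a few residue classes.

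\emph{Main obstacle.} The hard step is the middle range $5\le n\le 10$, especially $n=5$ and $n=6$. There the powers are not dominated by the top element, and Prouhet--Thue--Morse-type near-solutions (exact up to $n=3$) make simple congruences insufficient. If branch-and-bound stays too large, I will simply rely on a direct computer enumeration of the $6\cdot 2^{15}$ cases and state it as a verified finite check.
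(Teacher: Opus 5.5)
Your overall route is the paper's. For $n\ge 11$ the paper likewise fixes $16$ in one block and uses $\sum_{k=1}^{15}k^{11}=15662165784000<16^{11}$ together with the monotonicity of $(k/16)^n$. For $5\le n\le 10$ it runs a top-down branch-and-bound by hand. Your numerical estimates for $n=11$ (total $\approx 0.89$) and your remark that $n=10$ just fails dominance are correct.

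One ingredient of your plan is wrong, however: the congruence pruning. Let $D$ be the current signed difference, $r_1,\dots,r_s$ the remaining powers, and $R=\sum r_i$. A branch is completable iff $D+\sum_i\varepsilon_i r_i=0$ for some $\varepsilon_i\in\{\pm1\}$, i.e.\ iff $D+R=2Q$ for some subset sum $Q$ of the $r_i$. The signs disappear only modulo $2$. So this forces $D\equiv R\pmod 2$, and more generally $D\equiv R\pmod{2g}$ with $g=\gcd(r_1,\dots,r_s)$, but nothing modulo $3$, $5$ or $16$. For example, suppose the remaining powers are $1$ and $2^n$ and $D=2^n-1$. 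The branch closes ($1$ into $B$, $2^n$ into the complement), yet $D-R=-2$ is divisible by none of $3,5,16$. So a search pruned by your rule is unsound as a certificate that $a_n=0$. A smaller slip: for odd $i$, the residue of $i^n$ modulo $16$ is $4$-periodic in $n$, not eventually constant.

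Drop this rule; you need neither it nor a computer. The size test you already have is exactly what the paper uses. Let $R_n=\tfrac12\bigl(\sum_{k\le 15}k^n-16^n\bigr)$ be the amount the rest of $B$ must supply. Exclude $k$ when $k^n$ exceeds the current remainder, and force $k$ when $\sum_{j<k}j^n$ falls short of it. This closes each case in a few lines:
\begin{itemize}
\item For $n=10$: $R_{10}=3510520212<10^{10}$ excludes $10,\dots,15$; $\sum_{k\le 8}k^{10}<R_{10}$ forces $9$; then $6^{10}>R_{10}-9^{10}=23735811>\sum_{k\le 5}k^{10}$ kills the branch.
\item For $n=9$: $12^9>R_9>\sum_{k\le 11}k^9$ ends it at once.
\item For $n=5$, which you expected to be hardest: only the branches $14\in B$; $13\in B$, $14\notin B$ (with one sub-branch on $12$); and $13,14\notin B$ are needed.
\end{itemize}
So the finite part can and should be written out explicitly rather than left as an unspecified enumeration.
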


\begin{proof}
Set $Z_n=\frac12\sum_{k=1}^{16} k^n$. A solution counted by $a_{n}$ corresponds to a subset $A\subseteq [16]$ such that 
$\sum_{a\in A} a^n=Z_n$. Replacing $A$ by its complement if necessary, we may assume that $16\in A$. Set \[R_n=\sum_{a\in A\setminus\{16\}}a^n=Z_n-16^n=\frac{1}{2}\left(\sum_{k=1}^{15}k^n-16^n\right).\] Now,
\[
\sum_{k=1}^{15}k^{11}
=15662165784000
<17592186044416
=16^{11}.
\]
For every $k\in[15]$, the sequence
$n\mapsto \left(\frac{k}{16}\right)^n$ is decreasing. Hence, for every $n\geq11$,
\[
\frac{\sum_{k=1}^{15}k^n}{16^n}
=\sum_{k=1}^{15}\left(\frac{k}{16}\right)^n
\leq
\sum_{k=1}^{15}\left(\frac{k}{16}\right)^{11}
<1.
\]
Thus,
\[
R_n=\frac12\left(\sum_{k=1}^{15}k^n-16^n\right)<0,
\]
contradicting the fact that $R_n$ is a sum of nonnegative integers. It remains to consider the cases $n=5,6,7,8,9,10$.

\begin{enumerate}

\item $n=10$. We have
$R_{10}=3510520212<10^{10}$. Hence $10,11,12,13,14,15\notin A$. Since
$\sum_{k=1}^{8}k^{10}=1427557524<R_{10}$,
necessarily $9\in A$. Now, $R_{10}-9^{10}=23735811$. Since $6^{10}>23735811$, necessarily $6,7,8\notin A$. However,
$\sum_{k=1}^{5}k^{10}=10874275<23735811$.
Thus, $n=10$ is impossible.

\item $n=9$. We have $R_9=5040730912<12^9$. Thus, $12,13,14,15\notin A$. On the other hand,
$\sum_{k=1}^{11}k^9=3932252676<R_9$. Thus, $n=9$ is impossible.

\item $n=8$. We have $R_8=685757508<13^8$. Thus, $13,14,15\notin A$. Since $\sum_{k=1}^{11}k^8=382090214<R_8$, necessarily $12\in A$. Now, $R_8-12^8=255775812$. Since $\sum_{k=1}^{10}k^8=167731333<R_8-12^8$, necessarily $11\in A$. We have $R_8-12^8-11^8=41416931$. Since $9^8>41416931$, necessarily $9,10 \notin A$. On the other hand, $\sum_{k=1}^{8}k^8=24684612<41416931$. Thus, $n=8$ is impossible. 

\item $n=7$. We have $R_7=71992672<14^7$. Thus, $14,15\notin A$. Assume that $13\in A$. We have $R_7-13^7=9244155$. Since
$10^7>9244155$, necessarily $10,11,12\notin A$. On the other hand, $\sum_{k=1}^{9}k^7=8080425<9244155$, showing that $13\in A$ is impossible. Assume now that $13\notin A$. Since $\sum_{k=1}^{11}k^7=37567596<R_7$, necessarily $12\in A$. We have $R_7 - 12^7=36160864$. Since $\sum_{k=1}^{10}k^7=18080425<36160864$, necessarily $11\in A$. Then $R_7 - 12^7-11^7=16673693$. Since $\sum_{k=1}^{9}k^7=8080425<16673693$, necessarily $10\in A$. Then $R_7 - 12^7-11^7-10^7=6673693$. Since $\sum_{k=1}^{8}k^7=3297456<6673693$, necessarily $9\in A$. Then $R_7 - 12^7-11^7-10^7-9^7=1890724$. But $8^7>1890724$ and therefore $8\notin A$. On the other hand, $\sum_{k=1}^{7}k^7=1200304<1890724$, showing that $13\notin A$ is impossible, which, in turn, concludes the proof that $n=7$ is impossible.

\item $n=6$. We have $R_6=6852852<14^6$. Thus, $14,15\notin A$. Since $\sum_{k=1}^{12}k^6=6735950<R_6$, necessarily $13\in A$. We have $R_6-13^6=2026043$. But
$12^6>2026043$ and therefore $12\notin A$. Since $\sum_{k=1}^{10}k^6=1978405<2026043$, necessarily $11\in A$. We have $R_6-13^6-11^6=254482$. But $8^6>254482$ and therefore $8,9,10\notin A$. On the other hand
$\sum_{k=1}^{7}k^6=184820<254482$, concluding the proof that $n=6$ is impossible. 

\item $n=5$. We have $R_5=625312<15^5$. Thus, $15\notin A$. We now distinguish between two cases.
\begin{enumerate}
\item $14\in A$. We have
$R_5-14^5=87488$. But $10^5>87488$ and therefore $10,11,12,13\notin A$. Since $\sum_{k=1}^{8}k^5=61776<87488$, necessarily $9\in A$. We have $R_5-14^5-9^5=28439$. But $8^5>28439$ and therefore $8\notin A$. Since $\sum_{k=1}^{6}k^5=12201<28439$, necessarily $7\in A$. We have $R_5-14^5-9^5-7^5=11632$. Since $\sum_{k=1}^{5}k^5=4425<11632$, necessarily $6\in A$. Then $R_5-14^5-9^5-7^5-6^5=3856$. Since $\sum_{k=1}^{4}k^5=1300<3856$, necessarily $5\in A$. We have $R_5-14^5-9^5-7^5-6^5-5^5=731$. But $4^5>731$ and therefore $4\notin A$. On the other hand, $\sum_{k=1}^{3}k^5=276<731$, concluding the proof that $14\in A$ is impossible.

\item $14\notin A$. Again, we distinguish between two cases.
\begin{enumerate}
\item $13\in A$. We have $R_5-13^5=254019$. Assume that $12\in A$. We have $R_5-13^5-12^5=5187$. But $6^5>5187$ and therefore $6,7,8,9,10,11\notin A$. On the other hand $\sum_{k=1}^{5}k^5=4425<5187$, showing that $12\notin A$. Since $\sum_{k=1}^{10}k^5=220825<254019$, necessarily $11\in A$. We have $R_5-13^5-11^5=92968$. But $10^5>92968$ and therefore $10\notin A$. Since $\sum_{k=1}^{8}k^5=61776<92968$, necessarily $9\in A$. We have $R_5-13^5-11^5-9^5=33919$. Since
$\sum_{k=1}^{7}k^5=29008<33919$, necessarily $8\in A$. We have $R_5-13^5-11^5-9^5-8^5=1151$. But $5^5>1151$ and therefore $5,6,7\notin A$. Since $\sum_{k=1}^{3}k^5 = 276<1151$, necessarily $4\in A$. We have $R_5-13^5-11^5-9^5-8^5-4^5=127$. But $3^5=243>127$ and therefore $3\notin A$. On the other hand, $\sum_{k=1}^{2}k^5=33<127$, showing that $13\in A$ is impossible.

\item $13\notin A$. Since $\sum_{k=1}^{11}k^5=381876<625312$, necessarily $12\in A$. We have $R_5-12^5=376480$. Since $\sum_{k=1}^{10}k^5=220825<376480$, necessarily $11\in A$. We have $R_5-12^5-11^5=215429$. Since
$\sum_{k=1}^{9}k^5=120825<215429$, necessarily $10\in A$. We have $R_5-12^5-11^5-10^5=115429$. Since $\sum_{k=1}^{8}k^5=61776<115429$, necessarily $9\in A$. We have $R_5-12^5-11^5-10^5-9^5=56380$. Since $\sum_{k=1}^{7}k^5=29008<56380$, necessarily $8\in A$. We have $R_5-12^5-11^5-10^5-9^5-8^5=23612$. Since $\sum_{k=1}^{6}k^5=12201<23612$, necessarily $7\in A$. We have $R_5-12^5-11^5-10^5-9^5-8^5-7^5=6805$. But $6^5=7776>6805$ and therefore $6\notin A$. Since $\sum_{k=1}^{4}k^5=1300<6805$, necessarily $5\in A$. We have $R_5-12^5-11^5-10^5-9^5-8^5-7^5-5^5=3680$. Since $\sum_{k=1}^{3}k^5=276<3680$, necessarily $4\in A$. We have $R_5-12^5-11^5-10^5-9^5-8^5-7^5-5^5-4^5=2656$. But $\sum_{k=1}^{3}k^5=276<2656$, showing that also $13\notin A$ is impossible. This concludes the proof that $14\notin A$ is impossible, which, in turn, concludes the proof that $n=5$ is impossible. \qedhere
\end{enumerate}
\end{enumerate}
\end{enumerate}
\end{proof}

\subsection{\texorpdfstring{\seqnum{A222945}}{}}

For $n\in\NN$ let
\[
S_n=\{i+j+k\;:\; i,j,k\in\ZZ, |i|, |j|, |k|, |ijk|\leq n\}.
\]
Mathar conjectured in \seqnum{A222945} that $\#S_n = 4n+1$. The statement of the following theorem confirms the conjecture.

\begin{theorem}
Let $n\in\NN$ with $n\geq 2$. Then
$S_n= \{t\in\ZZ\;:\; |t|\leq 2n\}$.
\end{theorem}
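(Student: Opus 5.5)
We prove the two inclusions separately. The containment $S_n\subseteq\{t\in\ZZ : |t|\le 2n\}$ is the substantive half. The reverse containment only needs explicit triples.

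\emph{Upper bound.} Let $i,j,k\in\ZZ$ with $|i|,|j|,|k|,|ijk|\le n$ and set $t=i+j+k$. By the symmetry $(i,j,k)\mapsto(-i,-j,-k)$ it suffices to show $|t|\le 2n$.
\begin{itemize}
\item If one of the three entries is $0$, then $|t|$ is at most the sum of the other two absolute values, so $|t|\le 2n$.
\item Otherwise all entries are nonzero. Order them so that $1\le |i|\le |j|\le |k|$. Then $|j|\,|k|\le |ijk|\le n$.
\item If $|j|=1$, then $|i|=1$ as well, and $|t|\le 2+|k|\le n+2\le 2n$, using $n\ge 2$.
\item If $|j|\ge 2$, then $|k|\le n/|j|\le n/2$, so $|t|\le 3|k|\le 3n/2\le 2n$.
\end{itemize}
This short case split is the whole argument for the upper bound. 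The only place needing care is the hypothesis $n\ge 2$ in the case $|i|=|j|=1$; I will check $n=2$ there explicitly. The bound is in fact slightly sharper: $|t|\le \max(2n,n+2)$.

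\emph{Lower bound.} Given $t$ with $|t|\le 2n$, we again reduce to $t\ge 0$ by negation.
\begin{itemize}
\item If $0\le t\le 2n$, write $t=i+j$ with $0\le i,j\le n$, for instance $i=\min(t,n)$ and $j=t-i$, and take $k=0$.
\item Then $|i|,|j|\le n$, $|k|=0$ and $|ijk|=0\le n$, so the triple is admissible and $t\in S_n$.
\end{itemize}

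Combining the two inclusions gives the equality $S_n=\{t\in\ZZ : |t|\le 2n\}$. In particular $\#S_n=4n+1$, which confirms Mathar's conjecture. I expect no real obstacle in any step.
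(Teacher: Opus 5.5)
Your proof is correct. The reverse inclusion and the case where some entry vanishes match the paper's argument; the paper writes $t=\varepsilon n+\varepsilon m+0$ with $m=|t|-n$ when $|t|>n$, which is your $i=\min(t,n)$, $j=t-i$, $k=0$ up to sign.

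The genuine difference is in the all-nonzero case. The paper uses no ordering or case split; it relies on the identity
\[
|ijk|-|i|-|j|-|k|+2=(|i|-1)(|j|-1)(|k|-1)+(|i|-1)(|j|-1)+(|j|-1)(|k|-1)+(|k|-1)(|i|-1)\ge 0,
\]
which gives $|i+j+k|\le |ijk|+2\le n+2\le 2n$ in one line. You instead order $|i|\le|j|\le|k|$ and split on whether $|j|=1$ (bound $n+2$) or $|j|\ge 2$ (bound $|k|\le n/2$, hence $3n/2$).

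Your route needs no identity to be discovered. It also makes visible that $n\ge 2$ is used only when two entries are $\pm1$. The paper's route yields the uniform inequality $|i|+|j|+|k|\le |ijk|+2$ for all nonzero integer triples. In your subcase $|j|\ge 2$ this is actually stronger than your $3n/2$ once $n>4$.

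Three small remarks:
\begin{itemize}
\item The appeal to the symmetry $(i,j,k)\mapsto(-i,-j,-k)$ in the upper bound is superfluous, since you bound $|t|$ directly.
\item Calling $|t|\le\max(2n,n+2)$ ``slightly sharper'' is inaccurate, because $\max(2n,n+2)=2n$ for $n\ge 2$. It is rather the form of the bound that survives at $n=1$, where indeed $3=1+1+1\in S_1$.
\item No separate check of $n=2$ is needed, since $n+2\le 2n$ is equivalent to $n\ge 2$.
\end{itemize}
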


\begin{proof}
``$\{t\in\ZZ\;:\; |t|\leq 2n\}\subseteq S_n"$. Let $t\in\ZZ$ with $|t|\leq 2n$. If $|t|\leq n$, then
$t=t+0+0$ and $|t|, |0|, |t\cdot 0\cdot 0| \leq n$. Thus, $t\in S_n$. Now assume that $|t|>n$. Write
$t=\varepsilon(n+m)$ with
$\varepsilon\in\{-1,1\}$ and $m=|t|-n$. Then $t=\varepsilon n+\varepsilon m+0$ and $|\varepsilon n|, |\varepsilon m|, |0|, |\varepsilon n\cdot \varepsilon m\cdot 0|\leq n$. Thus, $t\in S_n$.

``$S_n\subseteq \{t\in\ZZ\;:\; |t|\leq 2n\}"$. Let $s\in S_n$ and let $i,j,k\in\ZZ$ with $s=i+j+k$ and $|i|,|j|,|k|,|ijk|\leq n$. Assume that at least one of $i,j,k$ is zero. Without loss, $i=0$. Then $|s|=|0+j+k|\leq  |j|+|k|\leq 2n$. Now assume that $i,j,k\ne 0$. We claim that $|i|+|j|+|k|\leq |ijk|+2$. Indeed, 
\begin{align*}
&|ijk|-|i|-|j|-|k|+2
=\\
&(|i|-1)(|j|-1)(|k|-1)+(|i|-1)(|j|-1)+(|j|-1)(|k|-1)+(|k|-1)(|i|-1).
\end{align*}
Since $|i|,|j|,|k|\ge 1$, the right-hand side is nonnegative. Moreover, since $n\ge 2$, we have $n+2\leq 2n$. It follows that
\[
|s|=|i+j+k|\leq |i|+|j|+|k|\leq |ijk|+2\leq n+2\leq 2n.\qedhere
\] 
\end{proof}

\subsection{\texorpdfstring{\seqnum{A242932}}{}}

Sequence \seqnum{A242932} consists of the numbers $n\in\NN$ for which there exist $k\in\NN$ and a prime number $p$ such that
\begin{equation}\label{orr1}
\frac{kn}{k+n}=p.
\end{equation} Orr conjectured in \seqnum{A242932} that the sequence contains no squares of even integers, other than $4$. In the following theorem we provide a characterization of the sequence. The subsequent corollary confirms the conjecture.

\begin{theorem}
Let $n\in\NN$. Then $n\in \seqnum{A242932}$ if and only if there exists a prime number $p$ such that
$n\in\{p+1,2p,p(p+1)\}$.
\end{theorem}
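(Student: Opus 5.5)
The plan is to rewrite \eqref{orr1} as a factorization. Since $kn=p(k+n)$, we have $kn-pk-pn=0$, hence
\[
(k-p)(n-p)=p^2 .
\]
Conversely, any factorization of $p^2$ with both factors positive yields a solution. So the task reduces to determining which $n$ arise as $n=p+d$ with $d$ a divisor of $p^2$, subject to $k=p+p^2/d\in\NN$.

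For the forward direction, assume $n\in\seqnum{A242932}$ with witnesses $k$ and $p$. First I would rule out a negative factorization. If $k-p$ and $n-p$ were both negative, then $k,n<p$, and so $kn<p\cdot\max(k,n)<p(k+n)$, which contradicts $kn=p(k+n)$. Alternatively, both factors would have absolute value less than $p$, so their product would be less than $p^2$. Note also that neither factor can be zero, since their product is $p^2$. Hence $n-p$ is a positive divisor of $p^2$, i.e., $n-p\in\{1,p,p^2\}$. This gives $n\in\{p+1,2p,p+p^2\}=\{p+1,2p,p(p+1)\}$.

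For the converse, I would exhibit $k$ explicitly in each case:
\begin{itemize}
\item for $n=p+1$ take $k=p(p+1)$;
\item for $n=2p$ take $k=2p$;
\item for $n=p(p+1)$ take $k=p+1$.
\end{itemize}
In each case one checks that $kn/(k+n)=p$. This check is routine once written as $(k-p)(n-p)=p^2$.

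The only mildly delicate step is excluding the case where both factors are negative. Everything else is bookkeeping. The corollary then follows by examining when $p+1$, $2p$, or $p(p+1)$ can equal $(2m)^2$:
\begin{itemize}
\item $p(p+1)$ lies strictly between $p^2$ and $(p+1)^2$, so it is never a square;
\item $2p$ is squarefree, so it is never a square;
\item $p+1=4m^2$ gives $p=(2m-1)(2m+1)$, which forces $m=1$ and hence $n=4$.
\end{itemize}
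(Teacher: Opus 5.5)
Your proof of the theorem is correct and is essentially the paper's argument: the same factorization $(n-p)(k-p)=p^2$, the same reduction to $n-p\in\{1,p,p^2\}$, and the same witnesses $k=p(p+1),\,2p,\,p+1$. The only difference is that the paper gets $n-p>0$ and $k-p>0$ directly from $p=n\cdot\frac{k}{k+n}<n$ and $p=k\cdot\frac{n}{k+n}<k$, whereas you exclude the both-negative case.

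One small slip in your aside on the corollary: $2p$ is not squarefree when $p=2$, since then $2p=4$. The correct reason that $2p\neq 4m^2$ for $m\geq 2$ is that it would force $p=2m^2$, which is even and larger than $2$.
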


\begin{proof}
Let $n\in\NN$ and assume that \eqref{orr1} holds for $n$ for some $k\in\NN$ and some prime number $p$.
Rewriting \eqref{orr1} we obtain
$kn-pk-pn=0$. Adding $p^2$ to both sides yields $(n-p)(k-p)=p^2$. Since $\frac{k}{k+n}<1$, it follows from \eqref{orr1} that $p<n$ and therefore $n-p>0$. Similarly, since $\frac{n}{k+n}<1$, we have $p<k$ and therefore $k-p>0$. Since $p$ is prime, $n-p\in\{1,p,p^2\}$. Equivalently, $n\in\{p+1,2p,p(p+1)\}$. 

Conversely, let $n\in\NN$ and assume that there exists a prime number $p$ such that $n\in\{p+1,2p,p(p+1)\}$. We distinguish between the three possibilities.
\begin{enumerate}
\item $n=p+1$. Set $k=p^2+p$. We have
\[
\frac{kn}{k+n}
=\frac{(p^2+p)(p+1)}{(p^2+p)+(p+1)}
=\frac{p(p+1)^2}{(p+1)^2}
=p.
\] 
\item $n=2p$. Set $k=2p$. We have
\[
\frac{kn}{k+n}
=\frac{(2p)(2p)}{2p+2p}
=\frac{4p^2}{4p}
=p.
\]
\item $n=p(p+1)$. Set $k=p+1$. We have
\[
\frac{kn}{k+n}
=\frac{(p+1)p(p+1)}{(p+1)+p(p+1)}
=\frac{p(p+1)^2}{(p+1)^2}
=p.
\]
\end{enumerate}
In each of the three cases, \eqref{orr1} holds for $n$.
\end{proof}

\begin{corollary}
Let $m\in\NN$ with $m\geq 2$. Then $4m^2\notin \seqnum{A242932}$.
\end{corollary}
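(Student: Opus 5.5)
By the preceding theorem, it suffices to show that $4m^2\notin\{p+1,2p,p(p+1)\}$ for every prime number $p$. The plan is to take a prime $p$, suppose $4m^2$ equals one of the three values, and rule out each case separately using divisibility and parity.

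\emph{Case $4m^2=2p$.} Then $p=2m^2$ is even, so $p=2$ and $m^2=1$. This contradicts $m\geq 2$.

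\emph{Case $4m^2=p+1$.} Then $p=4m^2-1=(2m-1)(2m+1)$. Since $m\geq 2$, both factors satisfy $2m-1\geq 3>1$, so $p$ is a nontrivial product and cannot be prime.

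\emph{Case $4m^2=p(p+1)$.} Here we compare with squares. Multiplying by $4$ and adding $1$ gives $16m^2+1=(2p+1)^2$. Rearranging,
\[
(2p+1-4m)(2p+1+4m)=1.
\]
Both factors are integers and the second is positive, so both must equal $1$. This forces $m=0$, a contradiction. An alternative route uses the fact that $p(p+1)$ lies strictly between $p^2$ and $(p+1)^2$, so it is never a perfect square, whereas $4m^2=(2m)^2$ is.

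None of the three steps should be a real obstacle. The only point needing care is the third case, which is settled either by the factorization above or by the strict inequalities $p^2<p(p+1)<(p+1)^2$.
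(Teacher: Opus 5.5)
Your proof is correct and takes essentially the same route as the paper: you use the characterization $n\in\{p+1,2p,p(p+1)\}$ and rule out each case, with $p=(2m-1)(2m+1)$ composite and $p=2m^2$ even. In the third case, your factorization $(2p+1-4m)(2p+1+4m)=1$ is only a cosmetic variant, and your alternative $p^2<p(p+1)<(p+1)^2$ is exactly the argument the paper uses.
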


\begin{proof}
By the previous theorem, if $4m^2\in\seqnum{A242932}$, then $4m^2\in\{p+1,2p,p(p+1)\}$ for some prime $p$. Assume that $4m^2=p+1$. Then
\[
p=4m^2-1=(2m-1)(2m+1),
\]
which is composite for $m\ge 2$, a contradiction. Assume now that $4m^2=2p$. Then $p=2m^2$, which is even and greater than $2$ and therefore not prime. A contradiction. Finally, since $p^2 <p(p+1)<(p+1)^2$, the number $p(p+1)$ can not be a perfect square, in particular, it can not be $4m^2$.
\end{proof}

\subsection{\texorpdfstring{\seqnum{A242933}}{}}

Sequence \seqnum{A242933} consists of all even numbers $n\in\NN$ such that for every $k\in\NN$ the number $\frac{nk}{n+k}$ is not prime. The statement of the following theorem was conjectured by Orr in \seqnum{A242933}.

\begin{theorem}
Let $n\in\NN$ with $n\ge 2$. Then, for every $k\in\NN$, the number
$\frac{4n^2k}{4n^2+k}$ is not prime.
\end{theorem}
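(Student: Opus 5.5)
The plan is to reduce the statement to the characterization of \seqnum{A242932} proved above, and then to the corollary that followed it. By definition, $N\in\seqnum{A242932}$ if and only if there are $k\in\NN$ and a prime $p$ with $\frac{kN}{k+N}=p$. The expression $\frac{4n^2k}{4n^2+k}$ is exactly $\frac{kN}{k+N}$ with $N=4n^2=(2n)^2$. So the claim is equivalent to $4n^2\notin\seqnum{A242932}$.

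Suppose, for contradiction, that some $k\in\NN$ makes $\frac{4n^2k}{4n^2+k}=p$ a prime. Then $4n^2\in\seqnum{A242932}$. Since $n\ge 2$, the corollary following the characterization theorem (applied with $m=n$) says $4m^2\notin\seqnum{A242932}$, which gives the contradiction.

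To keep the argument self-contained, I would also recall the three cases behind that corollary. From $(N-p)(k-p)=p^2$ we get $N\in\{p+1,2p,p(p+1)\}$.
\begin{itemize}
\item If $4n^2=p+1$, then $p=(2n-1)(2n+1)$, which is composite because $2n-1\ge 3$.
\item If $4n^2=2p$, then $p=2n^2\ge 8$ is even, so it is not prime.
\item If $4n^2=p(p+1)$, this is impossible, since $p(p+1)$ lies strictly between $p^2$ and $(p+1)^2$ and so is not a square.
\end{itemize}

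There is no real obstacle here. The only point that needs care is checking that the fraction in the statement matches the definition of \seqnum{A242932} with $N=4n^2$, including that non-integer values of the fraction are trivially not prime. The hypothesis $n\ge2$ is exactly what excludes $n=1$: there $4=3+1$ is of the form $p+1$, and indeed $k=12$ gives $\frac{48}{16}=3$.
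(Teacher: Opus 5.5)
Your proof is correct, but it takes a different route from the paper's. You observe that the statement is literally the assertion $4n^2\notin\seqnum{A242932}$, so it is already the corollary proved in the \seqnum{A242932} subsection. You then inherit that corollary's argument: the additive factorization $(N-p)(k-p)=p^2$ forces $N\in\{p+1,2p,p(p+1)\}$, and each option is ruled out for $N=(2n)^2$.

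The paper instead proves this theorem from scratch with a multiplicative decomposition. Writing $g=\gcd(k,4n^2)$, $k=gu$, $4n^2=gv$, coprimality gives $\gcd(u+v,uv)=1$, hence $(u+v)\mid g$, $g=t(u+v)$ and $p=tuv$. The three ways of placing $p$ among $t,u,v$ give exactly your three cases $4n^2=2p$, $4n^2=p+1$, $4n^2=p(p+1)$. The only other difference is in the last case: the paper argues via $p\mid n$ and $p+1=4pr^2$, while you use that $p(p+1)$ lies strictly between consecutive squares, which is shorter.

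Your reduction is more economical and makes explicit that Orr's two conjectures in \seqnum{A242932} and \seqnum{A242933} are the same fact. The paper's gcd argument is independent of the preceding subsection, at the cost of essentially duplicating the case analysis.

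If you keep the self-contained recap, state explicitly that $N-p>0$ and $k-p>0$ (because $p=\frac{kN}{k+N}$ is less than both $N$ and $k$). That positivity is what restricts $N-p$ to $\{1,p,p^2\}$.
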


\begin{proof}
Assume that for some $k\in\NN$, the number
$\frac{4n^2k}{4n^2+k}$ is a prime number $p$. Let $g=\gcd(k,4n^2)$ and write $k=gu, 4n^2=gv$ for some $u,v\in\NN$.
Then $p=\frac{guv}{u+v}$. Since $\gcd(u,v)=1$, we have
\[
\gcd(u+v,u)=\gcd(u+v,v)=1.
\]
Thus, $\gcd(u+v,uv)=1$ and therefore $(u+v)\mid g$. Hence, we may write $g=t(u+v)$ for some $t\in\NN$. It follows that $p=tuv$. Since $p$ is prime, necessarily, exactly one of the three numbers $t,u,v$ is equal to $p$, and the other two are equal to $1$. We distinguish between the three cases.
\begin{enumerate}
\item $t=p$ and $u=v=1$. Then
$4n^2=p\cdot 1\cdot (1+1)=2p$. Thus, $p=2n^2$, a composite since $n\ge 2$. 
\item $u=p$ and $t=v=1$. Then
$4n^2=1\cdot 1\cdot (p+1)=p+1$. Thus,
$p=4n^2-1=(2n-1)(2n+1)$, a composite since $n\ge 2$. 
\item $v=p$ and $t=u=1$. Then
$4n^2=1\cdot p\cdot (1+p)=p(p+1)$. If $p=2$, then $4n^2=6$, which is impossible. Thus, $p>2$ and therefore $p\mid n$. Write $n=pr$ for some $r\in\NN$. Then
$p(p+1)=4p^2r^2$ and therefore
$p+1=4pr^2$, which is impossible.
\end{enumerate}
In all cases we obtain a contradiction. Therefore, for every $k\in\NN$, the number
$\frac{4n^2k}{4n^2+k}$ is not prime.
\end{proof}

\subsection{\texorpdfstring{\seqnum{A242992}}{}}

For $n\in\NN_0$ let $a_{n}$ be the smallest $k\in\NN$ such that
\begin{equation}\label{e1992}
\frac{n}{2}<k<n
\qquad\text{and}\qquad
(2^{n-k}-1) \mid (2^k-2),
\end{equation}
provided that such $k$ exists. Otherwise, set $a_{n}=0$. For $n\in\NN$ with $n\geq 2$ let $b_{n}$ denote the largest proper divisor of $n$, i.e., $b_{n}=\frac{n}{p}$ where $p$ is the least prime factor of $n$. The statement of Theorem \ref{OUDRA} below was conjectured by Oudra in \seqnum{A242992}. We shall need the following standard result (e.g., \cite[Corollary 3.36]{ADM}).

\begin{lemma}\label{lemADM}
Let $a,b\in\NN$ with $a>b$ and $\gcd(a,b)=1$. Then, for every $m,n\in\NN$, we have
\[(a^m - b^m)\mid (a^n-b^n) \iff m\mid n.\]
\end{lemma}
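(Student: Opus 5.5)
We prove the two directions separately. For the forward direction we use the factorization of $x^q-y^q$. For the converse we use a division-with-remainder argument, together with Euclid's lemma (recalled in the subsection on \seqnum{A000680}) to remove a coprime factor.

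\emph{Step 1 ($m\mid n\Rightarrow$ divisibility).} Write $n=qm$ with $q\in\NN$ and set $x=a^m$, $y=b^m$. The identity $x^q-y^q=(x-y)(x^{q-1}+x^{q-2}y+\cdots+y^{q-1})$ then gives $(a^m-b^m)\mid(a^n-b^n)$ immediately.

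\emph{Step 2 (reduction in the converse).} Assume $(a^m-b^m)\mid(a^n-b^n)$. Write $n=qm+r$ with $q\in\NN_0$ and $0\le r<m$. We use the decomposition
\[
a^n-b^n=a^r\left(a^{qm}-b^{qm}\right)+b^{qm}\left(a^r-b^r\right).
\]
By Step 1 (trivially so when $q=0$), the first summand is divisible by $a^m-b^m$. Hence $(a^m-b^m)\mid b^{qm}(a^r-b^r)$.

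\emph{Step 3 (coprimality).} We show $\gcd(a^m-b^m,b^{qm})=1$. If a prime $p$ divides both numbers, then $p\mid b$, hence $p\mid a^m$, hence $p\mid a$. This contradicts $\gcd(a,b)=1$. Since $a>b\ge1$, we have $a^m-b^m\ge1$. Euclid's lemma now yields $(a^m-b^m)\mid(a^r-b^r)$.

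\emph{Step 4 (size argument).} We first check $0\le a^r-b^r<a^m-b^m$. The left inequality is clear. For the right one, note that
\[
a^m-a^r=a^r\left(a^{m-r}-1\right)>b^r\left(b^{m-r}-1\right)=b^m-b^r,
\]
because $a^r\ge b^r\ge1$ and $a^{m-r}-1>b^{m-r}-1\ge0$ (using $m>r$ and $a>b$). Rearranging gives $a^r-b^r<a^m-b^m$. A nonnegative integer smaller than a positive divisor of it must be $0$, so $a^r=b^r$. Since $a>b$, this forces $r=0$, and therefore $m\mid n$.

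The only step needing any care is the strict inequality in Step 4, which is the single point where $a>b$ is genuinely used. The rest of the argument is routine.
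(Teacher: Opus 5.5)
The paper does not prove this lemma. It quotes it as a standard fact \cite[Corollary 3.36]{ADM}, so your argument is a self-contained substitute, and it is correct. The decomposition in Step~2 is an identity, and the cancellation of $b^{qm}$ in Step~3 is a legitimate use of Euclid's lemma. The chain $a^r(a^{m-r}-1)\ge b^r(a^{m-r}-1)>b^r(b^{m-r}-1)$ then gives the strict inequality in Step~4.

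A common textbook route proves the stronger identity $\gcd(a^m-b^m,a^n-b^n)=a^{\gcd(m,n)}-b^{\gcd(m,n)}$ instead, by iterating your Step~2--3 reduction along the Euclidean algorithm on the exponents. With $d=\gcd(m,n)$, it then concludes from $a^{d}-b^{d}=a^m-b^m$ that $d=m$, by strict monotonicity of $k\mapsto a^k-b^k$. That route gives the exact gcd for arbitrary $m,n$. Your version needs only a single division step, replaces the iteration by the size comparison of Step~4, and uses nothing beyond Euclid's lemma, which the paper already recalls.

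Your proof also isolates the role of $\gcd(a,b)=1$: it serves only to cancel $b^{qm}$. It cannot be dropped, since with $a=6$, $b=3$ one has $6^2-3^2=27\mid 189=6^3-3^3$ although $2\nmid 3$. In the paper's only application, $a=2$ and $b=1$, so Step~3 is vacuous there.

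One small correction to your closing remark: $a>b$ is not used only in Step~4. It is also used in Step~3, to know $a^m-b^m\neq 0$, and at the very end, to get $r=0$ from $a^r=b^r$.
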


\begin{theorem}\label{OUDRA}
Let $n\in\NN$ with $n\geq 3$. Then $a_{n}=n-b_{n-1}$.
\end{theorem}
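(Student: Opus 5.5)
We reformulate the divisibility condition in \eqref{e1992} so that Lemma~\ref{lemADM} applies. Set $m=n-k$; then $n/2<k<n$ is equivalent to $1\leq m<n/2$, and minimizing $k$ amounts to maximizing $m$. Since $2^k-2=2(2^{k-1}-1)$ and $2^m-1$ is odd, Euclid's lemma gives
\[
(2^{m}-1)\mid(2^k-2)\iff(2^m-1)\mid(2^{k-1}-1).
\]
Applying Lemma~\ref{lemADM} with $a=2$, $b=1$ requires $k-1\geq 1$. This holds because $k>n/2\geq 3/2$ forces $k\geq2$. Lemma~\ref{lemADM} then shows the condition is equivalent to $m\mid k-1$. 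Since $k-1=(n-1)-m$, this is in turn equivalent to $m\mid n-1$. Hence $a_n=n-m^*$, where $m^*$ is the largest divisor $m$ of $n-1$ with $1\le m<n/2$. Note that $m=1$ always qualifies because $n\geq3$, so $a_n\neq0$.

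It remains to show $m^*=b_{n-1}$. First suppose $n-1\geq2$, which holds since $n\geq 3$. Divisors of $N=n-1$ satisfying $m<n/2=(N+1)/2$ are exactly the divisors with $m\le N/2$, since $m$ is an integer and $m<(N+1)/2$ means $2m\le N$. These are precisely the proper divisors of $N$, because any proper divisor satisfies $m\le N/2$ and $N$ itself violates $N\le N/2$. The largest proper divisor is $N/p=b_{N}$ with $p$ the least prime factor of $N$. Therefore $a_n=n-b_{n-1}$.

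The only delicate points are boundary checks. We must ensure $k-1\in\NN$ before invoking Lemma~\ref{lemADM}, and handle $m=1$, where $2^1-1=1$ divides everything, consistently with the lemma's statement (fine, since $1\mid$ anything). We must also verify the strict inequality translation $m<(N+1)/2\iff m\le N/2$. When $n-1$ is prime, $b_{n-1}=1$ and the formula gives $a_n=n-1$, consistent with $m^*=1$.
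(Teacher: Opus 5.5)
Your proof is correct and follows the paper's route: Euclid's lemma and Lemma~\ref{lemADM} reduce the problem to finding the largest divisor $t$ of $n-1$ with $1\le t<\frac n2$. The only difference is the last step. The paper splits into three cases ($n$ odd; $n-1$ prime; $n-1$ odd composite with least prime factor $p\ge 3$). You instead note that $t<\frac{n}{2}\iff 2t\le n-1$ for integers, so the admissible $t$ are exactly the proper divisors of $n-1$, which handles all cases at once.
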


\begin{proof}
Let $k\in\NN$ such that $\frac{n}{2}<k<n$. Clearly, this condition is equivalent to $1\leq n-k<\frac{n}{2}$. Since $n\geq 3$, we have $k\geq 2$. Furthermore, since $2^{n-k}-1$ is odd, we have
\begin{align*}
(2^{n-k}-1) \mid (2^k-2)&\iff (2^{n-k}-1) \mid (2^{k-1}-1)\\
&\iff (n-k) \mid (k-1)\\
&\iff (n-k) \mid (n-1),
\end{align*} where in the second equivalence we used Lemma \ref{lemADM}. It follows that $k$ satisfies \eqref{e1992} if and only if
\[
1 \leq n-k<\frac{n}{2} \qquad\text{and}\qquad (n-k) \mid (n-1).
\]
Setting $t=n-k$, we see that the smallest admissible value of $k$ corresponds to the largest admissible value of $t$. Hence,
\begin{equation}
a_{n}=n-\max\left\{1\leq t<\frac n2\;:\; t\mid (n-1) \right\}.
\end{equation}
Thus, it suffices to prove that
\begin{equation}\label{e3992}
\max\left\{1\leq t<\frac n2\;:\; t\mid (n-1) \right\}=b_{n-1}.
\end{equation}
Since $n\geq 3$, we have $\frac{n}{2}<n-1$ and therefore the condition $t<\frac n2$ excludes $t=n-1$. First, assume that $n$ is odd. Then $n-1$ is even and therefore $b_{n-1}=\frac{n-1}{2}<\frac n2$. Thus, \eqref{e3992} holds. Now assume that $n$ is even. Then $n-1$ is odd. Suppose that $n-1$ is prime. Then $b_{n-1}=1$ and no $1\leq t < \frac{n}{2}<n-1$ divides $n-1$, except $1$. Thus, \eqref{e3992} holds. Finally, assume that $n-1$ is composite and let $p$ be its least prime factor. Necessarily, $p\ge 3$, and therefore $1\leq b_{n-1}=\frac{n-1}{p}\leq \frac{n-1}{3}< \frac{n}{2}$. Thus, the left-hand side of \eqref{e3992} is $\geq b_{n-1}$. On the other hand, $b_{n-1}$ is the largest proper divisor of $n-1$ and the maximum on the left-hand side of \eqref{e3992} is taken over divisors of $n-1$, which are less than $\frac{n}{2}<n-1$. Thus, the left-hand side of \eqref{e3992} is $\leq b_{n-1}$. Thus, \eqref{e3992} holds.
\end{proof}

\subsection{\texorpdfstring{\seqnum{A328564}, \seqnum{A328565}, and \seqnum{A328566}}{}}

For $n\in\NN_0$ define the three sets
\begin{align*}
A_n&=\{(n-k)\land k\;:\; k=0,1,\dots,n\},\\
X_n&=\{(n-k)\oplus k\;:\; k=0,1,\dots,n\},\\
O_n&=\{(n-k)\lor k\;:\; k=0,1,\dots,n\}.
\end{align*}
Let $(s_n)_{n\in\NN_0}$ denote Stern's diatomic sequence (cf.\ \seqnum{A002487}), defined as follows: $s_0=0, s_1=1$ and, for every $m\in\NN_0$, 
\[s_{2m}=s_m,\qquad s_{2m+1}=s_m+s_{m+1}.
\]
The statements of Theorems \ref{t1564} and \ref{sig23} were conjectured by Sigrist in \seqnum{A328564}, \seqnum{A328565}, and \seqnum{A328566}. 

\begin{lemma}\label{l1564}
Let $m\in\NN_0$. Then
\begin{align}
A_{2m+1}&=\{2t\;:\; t\in A_m\},\label{e1564}\\
X_{2m+1}&=\{2t+1\;:\; t\in X_m\},\label{e2564}\\
O_{2m+1}&=\{2t+1\;:\; t\in O_m\}.\label{e3}
\end{align}
Furthermore, if $m\ge 1$ then
\begin{align}
A_{2m}&=\{2t\;:\; t\in A_m\} \dot\cup \{2t+1\;:\; t\in A_{m-1}\},\label{e4564}\\
X_{2m}&=\{2t\;:\; t\in X_m\} \dot\cup \{2t\;:\; t\in X_{m-1}\},\label{e5564}\\
O_{2m}&=\{2t\;:\; t\in O_m\}\dot\cup \{2t+1\;:\; t\in O_{m-1}\}.\label{e6}
\end{align}
\end{lemma}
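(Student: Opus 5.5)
The plan is to split $k$ by parity and track the lowest binary digit. For a pair $(n-k,k)$, the lowest bits of $n-k$ and $k$ are determined by $n$ and $k$ modulo $2$. Removing these bits leaves a pair summing to $\lfloor n/2\rfloor$ or $\lfloor n/2\rfloor-1$. The key observation is the identity: if $u=2u'+\alpha$ and $v=2v'+\beta$ with $\alpha,\beta\in\{0,1\}$, then
\[
u\land v=2(u'\land v')+(\alpha\land\beta),\qquad u\oplus v=2(u'\oplus v')+(\alpha\oplus\beta),\qquad u\lor v=2(u'\lor v')+(\alpha\lor\beta).
\]
Hence each set is an image of the corresponding set for a smaller index. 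It remains to identify which smaller index arises and which low bit is appended.

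\textbf{Odd case $n=2m+1$.} Exactly one of $k$ and $n-k$ is odd. Suppose first that $k=2j+1$ with $0\le j\le m$. Then $n-k=2(m-j)$, and the pair $(m-j,j)$ runs over all pairs summing to $m$. Suppose instead that $k=2j$. Then $n-k=2(m-j)+1$, and this gives the same pairs with the roles swapped. Since $\land,\oplus,\lor$ are commutative, this changes nothing. The low bits are $(0,1)$ or $(1,0)$, so $\alpha\land\beta=0$ and $\alpha\oplus\beta=\alpha\lor\beta=1$. This gives \eqref{e1564}, \eqref{e2564} and \eqref{e3} immediately.

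\textbf{Even case $n=2m$, $m\ge1$.} Here $k$ and $n-k$ have the same parity.
\begin{itemize}
\item If $k=2j$, then $n-k=2(m-j)$ with $0\le j\le m$. The low bits are $(0,0)$, so every operation appends $0$ to the results over pairs summing to $m$. This gives $\{2t: t\in A_m\}$, and similarly for $X_m$ and $O_m$.
\item If $k=2j+1$, then $n-k=2(m-1-j)+1$ with $0\le j\le m-1$. Since $m\ge1$, this range is nonempty and the pairs run over all pairs summing to $m-1$. The low bits are $(1,1)$. So $\land$ and $\lor$ append $1$, giving $\{2t+1:t\in A_{m-1}\}$ and $\{2t+1:t\in O_{m-1}\}$, while $\oplus$ appends $0$, giving $\{2t:t\in X_{m-1}\}$.
\end{itemize}
Taking the union over both parities of $k$ yields the set equalities in \eqref{e4564}, \eqref{e5564} and \eqref{e6}.

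\textbf{Disjointness.} For $A_{2m}$ and $O_{2m}$ the two pieces are disjoint by parity: one consists of even numbers, the other of odd numbers. For $X_{2m}$ both pieces are even, so I must show $X_m\cap X_{m-1}=\emptyset$. This is the only step that is not mechanical, and it is the main obstacle.

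The plan for it is to use $u\oplus v\equiv u+v\pmod 2$, because XOR and addition agree in the lowest bit. Then every element of $X_m$ has the parity of $m$, and every element of $X_{m-1}$ has the parity of $m-1$. So the two sets are disjoint, which gives the dotted union in \eqref{e5564}. I would also double-check the edge case $m=1$ (sets $X_1=\{1\}$ and $X_0=\{0\}$) and confirm that the index ranges for $j$ are exactly right so that no pair is missed or duplicated.
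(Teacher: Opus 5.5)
Your proposal is correct and follows essentially the same route as the paper: the bitwise identity $(2a+\varepsilon)\star(2b+\delta)=2(a\star b)+(\varepsilon\star\delta)$, a split of $k$ by parity with the same ranges for $j$, and disjointness in the $X_{2m}$ case via the fact that every element of $X_r$ has the parity of $r$. The paper also uses commutativity implicitly in the odd case, and it separates the two pieces of $A_{2m}$ and $O_{2m}$ by parity exactly as you do.
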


\begin{proof}
The key observation is that, for every
$\star\in\{\land,\lor,\oplus\}$, $a,b\in\NN_0$, and
$\varepsilon,\delta\in\{0,1\}$,
\[
(2a+\varepsilon)\star(2b+\delta)
=
2(a\star b)+(\varepsilon\star\delta).
\]
Let $j\in\{0,1,\ldots,m\}$. Then
\begin{align*}
(2m+1-2j)\star(2j)
&=(2m+1-(2j+1))\star(2j+1)\\
&=
\begin{cases}
2((m-j)\star j),
&\textnormal{if $\star=\land$},\\
2((m-j)\star j)+1,
&\textnormal{if $\star\in\{\lor,\oplus\}$}.
\end{cases}
\end{align*}
Since every $k\in\{0,1,\ldots,2m+1\}$ can be written uniquely in the form $2j$ or $2j+1$, this proves \eqref{e1564}, \eqref{e2564}, and \eqref{e3}. Assume now that $m\geq1$. For every $j\in\{0,1,\ldots,m\}$ and each $\star\in\{\land,\lor,\oplus\}$ we have $(2m-2j)\star(2j)=2((m-j)\star j)$. For every $j\in\{0,1,\ldots,m-1\}$ we have
\[
(2m-(2j+1))\star(2j+1)
=
\begin{cases}
2((m-1-j)\star j),
&\textnormal{if $\star=\oplus$},\\
2((m-1-j)\star j)+1,
&\textnormal{if $\star\in\{\land,\lor\}$}.
\end{cases}
\]
These identities give the unions in \eqref{e4564}, \eqref{e5564}, and \eqref{e6}. It remains to prove that the unions are disjoint. The disjointness in \eqref{e4564} and \eqref{e6} is clear, since one set consists of even integers and the other of odd integers. For the disjointness in \eqref{e5564}, notice that for every $r\in\NN_0$ and $y\in X_r$, we have $y\equiv r \pmod 2$. Indeed, let $k\in\{0,1,\ldots,r\}$ such that $y=(r-k)\oplus k$. Since the least significant bit of the XOR of two integers is the XOR of their least significant bits, and XOR of two bits agrees with their sum modulo $2$, we have \[y\equiv (r-k)\oplus k \equiv (r-k)+k \equiv r \pmod 2. \] Thus, every element of $X_r$ has the same parity as $r$. Hence, for every $m\in\NN$, we have $X_m\cap X_{m-1}=\emptyset$, since $m$ and $m-1$ have opposite parities.
\end{proof}

\begin{theorem}\label{t1564}
Let $n\in\NN_0$. Then $\# A_n= \# X_n=\# O_n =s_{n+1}$.
\end{theorem}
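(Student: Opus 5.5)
We prove the statement by strong induction on $n$, using Lemma \ref{l1564} to obtain recurrences for the three cardinalities. These recurrences coincide with the defining recurrence of Stern's sequence after the index shift $n\mapsto n+1$. Write $\alpha_n=\#A_n$, $\xi_n=\#X_n$, $\omega_n=\#O_n$.

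First, the maps $t\mapsto 2t$ and $t\mapsto 2t+1$ are injective. Hence \eqref{e1564}, \eqref{e2564}, and \eqref{e3} give, for every $m\in\NN_0$,
\[
\alpha_{2m+1}=\alpha_m,\qquad \xi_{2m+1}=\xi_m,\qquad \omega_{2m+1}=\omega_m .
\]
Since the unions in \eqref{e4564}, \eqref{e5564}, and \eqref{e6} are disjoint, these identities give, for every $m\ge 1$,
\[
\alpha_{2m}=\alpha_m+\alpha_{m-1},\qquad \xi_{2m}=\xi_m+\xi_{m-1},\qquad \omega_{2m}=\omega_m+\omega_{m-1}.
\]
In \eqref{e5564} both parts consist of even numbers, so disjointness is not automatic there. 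However, Lemma \ref{l1564} already establishes it via the parity of elements of $X_r$, and since $t\mapsto 2t$ is injective, $\#\{2t: t\in X_m\}\dot\cup\{2t:t\in X_{m-1}\}=\xi_m+\xi_{m-1}$.

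Set $\sigma_n=s_{n+1}$. The recurrence of Stern's sequence gives $\sigma_{2m+1}=s_{2m+2}=s_{m+1}=\sigma_m$ for every $m\in\NN_0$. It also gives $\sigma_{2m}=s_{2m+1}=s_m+s_{m+1}=\sigma_{m-1}+\sigma_m$ for every $m\ge1$. Thus each of $\alpha,\xi,\omega$ satisfies exactly the same recurrences as $\sigma$.

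For the base case, $A_0=X_0=O_0=\{0\}$, so all three cardinalities equal $1=s_1=\sigma_0$. Now let $n\ge1$ and assume the claim for all smaller indices. If $n=2m+1$, then $m<n$. If $n=2m$ with $m\ge1$, then $m,m-1<n$. In both cases the recurrences reduce the claim at $n$ to smaller indices, and the induction closes.

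All the real content, in particular the disjointness in the XOR case, is already contained in Lemma \ref{l1564}. The only step needing care is aligning the index shift with Stern's recurrence, namely verifying that $s_{2m+1}=s_m+s_{m+1}$ matches $\sigma_{2m}=\sigma_{m}+\sigma_{m-1}$. Beyond this bookkeeping I expect no obstacle.
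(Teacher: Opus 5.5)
Your proposal is correct and follows essentially the same route as the paper: it reads off $\#A_{2m+1}=\#A_m$ and $\#A_{2m}=\#A_m+\#A_{m-1}$ (and the analogues for $X_n$, $O_n$) from Lemma~\ref{l1564}, checks that $s_{n+1}$ satisfies the same recurrences, and matches initial values. The only cosmetic difference is that you need only the base case $n=0$, since $n=1=2\cdot 0+1$ is already covered by the odd recurrence, whereas the paper lists both $n=0$ and $n=1$ as initial values.
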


\begin{proof}
By \eqref{e1564}, for every $m\in\NN_0$ we have
$\# A_{2m+1}=\# A_m$ and, if $m\geq 1$, by \eqref{e4564}, $\# A_{2m}=\# A_m+\# A_{m-1}$. Furthermore, $\# A_0=\# A_1=1$. Now, for $m\in\NN_0$ set $d_m=s_{m+1}$. Then $d_0=d_1=1$.
Furthermore, $d_{2m+1}=s_{2m+2}=s_{m+1}=d_m$, and, if $m\ge 1$,
$d_{2m}=s_{2m+1}=s_m+s_{m+1}=d_{m-1}+d_m$. Thus, the sequences $(\# A_n)_{n\in\NN_0}$ and $(d_n)_{n\in\NN_0}$ satisfy the same recurrences and have the same initial values. Hence, $\# A_n=d_n=s_{n+1}$ for every $n\in\NN_0$. By \eqref{e2564} and \eqref{e5564}, the sequence $(\# X_n)_{n\in\NN_0}$ satisfies the same recurrences, and $\# X_0=\# X_1=1$. Similarly, by \eqref{e3} and \eqref{e6}, the sequence $(\# O_n)_{n\in\NN_0}$ satisfies the same recurrences, and $\# O_0=\# O_1=1$. Therefore, $\# X_n=\# O_n=s_{n+1}$, for every $n\in\NN_0$.
\end{proof}

\begin{theorem}\label{sig23}
Let $n\in\NN_0$ and let 
\[
a_n=\sum_{t\in A_n} t,\qquad
x_n=\sum_{t\in X_n} t,\qquad 
o_n=\sum_{t\in O_n} t.
\]
Then $a_n+x_n=o_n$.
\end{theorem}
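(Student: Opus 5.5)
We prove $a_n+x_n=o_n$ by strong induction on $n$, using the recursive descriptions of $A_n,X_n,O_n$ in Lemma \ref{l1564} together with the cardinalities from Theorem \ref{t1564}. The idea is that the sum over a set of the form $\{2t+\varepsilon: t\in S\}$ equals $2\sum_{t\in S}t+\varepsilon\,\#S$. The disjointness of the unions in \eqref{e4564}--\eqref{e6} lets us add the sums of the two pieces. Also, since $\#A_r=\#X_r=\#O_r$ for every $r$ by Theorem \ref{t1564}, the counting terms will match up.

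For the base cases, $A_0=X_0=O_0=\{0\}$, so $a_0=x_0=o_0=0$ and the identity holds for $n=0$.

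\emph{Odd case.} Let $n=2m+1$. By \eqref{e1564}, \eqref{e2564} and \eqref{e3},
\[
a_{2m+1}=2a_m,\qquad x_{2m+1}=2x_m+\#X_m,\qquad o_{2m+1}=2o_m+\#O_m .
\]
Since $\#X_m=\#O_m$, the induction hypothesis $a_m+x_m=o_m$ gives $a_{2m+1}+x_{2m+1}=o_{2m+1}$. This also covers $n=1$, taking $m=0$.

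\emph{Even case.} Let $n=2m$ with $m\ge1$. By \eqref{e4564}--\eqref{e6} and disjointness,
\begin{align*}
a_{2m}&=2a_m+2a_{m-1}+\#A_{m-1},\\
x_{2m}&=2x_m+2x_{m-1},\\
o_{2m}&=2o_m+2o_{m-1}+\#O_{m-1}.
\end{align*}
Adding the first two lines and applying the induction hypothesis at $m$ and at $m-1$, together with $\#A_{m-1}=\#O_{m-1}$, yields the claim.

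There is no real obstacle. The only point needing care is that the sums over the unions in the even case split as stated; the disjointness proved in Lemma \ref{l1564} handles this. Note also that the argument does not require the closed form $s_{n+1}$, only the equality of the three cardinalities.
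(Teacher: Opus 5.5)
Your proof is correct and matches the paper's argument: the paper derives the same six recurrences for $a_n,x_n,o_n$ from Lemma~\ref{l1564}, using only the equality of cardinalities from Theorem~\ref{t1564}. It then shows that $D_n=o_n-a_n-x_n$ satisfies $D_{2m+1}=2D_m$ and $D_{2m}=2(D_m+D_{m-1})$ with $D_0=D_1=0$, which is your strong induction phrased through the difference sequence.
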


\begin{proof}
Set $c_n=\# A_n $. By Theorem \ref{t1564}, we have $\# X_n =\# O_n =c_n$. Let $m\in\NN_0$. By \eqref{e1564}, \eqref{e2564}, and
\eqref{e3},
\begin{align}
a_{2m+1}&=\sum_{t\in A_m} 2t = 2a_{m},\label{xx1}\\
x_{2m+1}&=\sum_{t\in X_m} (2t+1)=2x_{m}+\# X_m =2x_{m}+c_{m},\label{xx2}\\
o_{2m+1}&=\sum_{t\in O_m} (2t+1)=2o_{m}+\# O_m =2o_{m}+c_{m}.\label{xx3}
\end{align}
By \eqref{e4564}, \eqref{e5564}, and \eqref{e6}, if $m\ge 1$, then
\begin{align}
a_{2m}&=\sum_{t\in A_m} 2t+ \sum_{t\in A_{m-1}} (2t+1)=2a_{m}+2a_{m-1}+c_{m-1},\label{xc1}\\
x_{2m}&=\sum_{t\in X_m}2t+ \sum_{t\in X_{m-1}}2t=2x_{m}+2x_{m-1},\label{xc2}\\
o_{2m}&=\sum_{t\in O_m}2t+ \sum_{t\in O_{m-1}}(2t+1)=2o_{m}+2o_{m-1}+c_{m-1}.\label{xc3}
\end{align} For $n\in\NN_0$ set $D_{n}=o_n-a_n-x_n$. We prove that $D_n=0$, for every $n\in\NN_0$. By \eqref{xx1}, \eqref{xx2}, and \eqref{xx3}, for every $m\in\NN_0$, 
\[
D_{2m+1}=o_{2m+1}-a_{2m+1}-x_{2m+1}=2o_{m}+c_{m} -2a_{m}-2x_{m}-c_{m}=2D_{m}.
\]
Furthermore, by \eqref{xc1}, \eqref{xc2}, and \eqref{xc3}, for every $m\ge 1$ we have
\begin{align*}
D_{2m}&=o_{2m}-a_{2m}-x_{2m}\\
&=2o_{m}+2o_{m-1}+c_{m-1}-2a_{m}-2a_{m-1}-c_{m-1}-2x_{m}-2x_{m-1}\\
&=2(D_{m}+D_{m-1}).
\end{align*}
We have $A_0=A_1=O_0=X_0=\{0\}$ and $X_1 = O_1 =\{1\}$. Thus,
$D_{0}=D_{1}=0$. It follows that $D_{n}=0$ for every $n\in\NN_0$.
\end{proof}

\subsection{\texorpdfstring{\seqnum{A176542}}{}}

For $j\in \NN_0$ and $n\in\NN$, let 
\begin{align*}
S_n(j)&=\sum_{k=j}^{j+n-1}T_k,\\
N(n)&=\#\{j\in\NN_0\;:\;S_n(j) \textnormal{ is a square}\}.
\end{align*} Sequence \seqnum{A176542} consists of the numbers $n\in\NN$ such that $0<N(n)<\infty$.
The statements of Theorem \ref{th42} and of Corollary \ref{c142} were conjectured by Barker in \seqnum{A176542}. We shall need the following result.

\begin{lemma}\label{ll142}
Let $C\in\ZZ$ and let $D\in\NN$ such that $8\mid D$ and $D$ is not a square. Assume that the equation
\begin{equation}\label{eb142}
X^2-DY^2=C
\end{equation}
has a solution $(X_0,Y_0)\in\NN^2$ with $8\mid X_0$. Then \eqref{eb142} has infinitely many solutions $(X_k,Y_k)\in\NN^2$ such that for every $k\in\NN_0$, $8\mid X_k$ and $Y_k\equiv Y_0\pmod 2$. Furthermore, $\lim_{k\to\infty}Y_k=\infty$.
\end{lemma}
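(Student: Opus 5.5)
The plan is to generate new solutions from $(X_0,Y_0)$ by composing with a suitable solution of the Pell equation $U^2-DV^2=1$. Since $D$ is a positive nonsquare, the Pell equation has a fundamental solution $(u_1,v_1)\in\NN^2$, and all powers $u_m+v_m\sqrt D=(u_1+v_1\sqrt D)^m$ are solutions with $u_m,v_m\in\NN$ strictly increasing in $m$. The congruence conditions will be preserved by choosing a unit with $u\equiv 1$ and $v\equiv 0 \pmod 2$ (and possibly more).

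\textbf{Step 1: the right unit.} Consider $u^2=1+Dv^2$ for any Pell solution. Since $8\mid D$, we get $u^2\equiv 1\pmod 8$, so $u$ is odd. I would then pass to the square $u'+v'\sqrt D=(u+v\sqrt D)^2$, so that $u'=u^2+Dv^2$ and $v'=2uv$. Hence $v'$ is even and $u'$ is odd. Set $\varepsilon=u'+v'\sqrt D$, with $u'$ odd, $v'$ even, $u',v'\geq 1$.

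\textbf{Step 2: recursion.} Define $X_k+Y_k\sqrt D=(X_0+Y_0\sqrt D)\varepsilon^k$, that is,
\[X_{k+1}=u'X_k+Dv'Y_k,\qquad Y_{k+1}=v'X_k+u'Y_k.\]
Multiplying norms gives $X_k^2-DY_k^2=C\cdot 1=C$. All coefficients are positive, so by induction $X_k,Y_k\in\NN$. Moreover $Y_{k+1}\geq u'Y_k+v'X_k>Y_k$, since $v'X_k\ge1$. Thus the $Y_k$ are strictly increasing, hence tend to infinity, and the solutions are pairwise distinct. This gives infinitely many solutions.

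\textbf{Step 3: congruences by induction.} Suppose $8\mid X_k$. Then $8\mid u'X_k$, and $8\mid Dv'Y_k$ because $8\mid D$, so $8\mid X_{k+1}$. For the parity, $Y_{k+1}=v'X_k+u'Y_k\equiv 0+Y_k\pmod 2$, using that $v'$ is even and $u'$ is odd. So $Y_{k+1}\equiv Y_k\equiv Y_0\pmod 2$.

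The only step needing care is Step 1, namely ensuring the multiplier has $u'$ odd and $v'$ even. The squaring trick handles this cleanly. Even the parity of $u'$ is automatic here, since $8\mid D$ forces $u$ to be odd and hence $u'$ odd. Existence of a nontrivial Pell solution is the classical fact, which I will cite.
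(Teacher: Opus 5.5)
Your proof is correct and follows the paper's argument step for step: you square a Pell solution to get a unit $u'+v'\sqrt D$ with $u'$ odd and $v'$ even, use the same recursion $X_{k+1}=u'X_k+Dv'Y_k$, $Y_{k+1}=v'X_k+u'Y_k$, and prove $8\mid X_k$ and $Y_k\equiv Y_0\pmod 2$ by the same induction. The only difference is that you get $Y_k\to\infty$ from $Y_{k+1}>Y_k$ (positive coefficients) instead of the paper's explicit formula in $\alpha^k$ and $\beta^k$, and this also makes explicit that $X_k,Y_k\in\NN$, which the paper leaves implicit.
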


\begin{proof}
It is well-known (e.g., \cite[Section~4.2]{Barbeau}) that the equation
\begin{equation}\label{p1}
u^2-Dv^2=1
\end{equation}
has a solution $(u_1,v_1)\in\NN^2$. Set $u=u_1^2+Dv_1^2$ and $v= 2u_1v_1$. Then $(u,v)$ solves \eqref{p1}, $u$ is odd, and $v$ is even. Let $\alpha=u+v\sqrt D$ and $\beta=u-v\sqrt D$. Then $\alpha>1$ and, since $\alpha\beta=u^2-Dv^2=1$, we have $\beta=\alpha^{-1}$. Thus, $0<\beta<1$. For $k\in\NN_0$ let $(X_k,Y_k)\in\NN^2$ be defined by
\begin{equation}\label{ef142}
X_k+Y_k\sqrt D=(X_0+Y_0\sqrt D)\alpha^k.
\end{equation}
Then 
\begin{equation}\label{fl242}
X_k-Y_k\sqrt D=(X_0-Y_0\sqrt D)\beta^k.
\end{equation}
Subtracting \eqref{fl242} from \eqref{ef142} and solving for $Y_k$ yields
\[
Y_k=\frac{1}{2\sqrt D}((X_0+Y_0\sqrt D)\alpha^k-(X_0-Y_0\sqrt D)\beta^k).
\]
Since $X_0,Y_0>0$, we have $X_0+Y_0\sqrt D>0$. Since $\alpha^k\underset{k\to\infty}{\longrightarrow}\infty$ and $\beta^k\underset{k\to\infty}{\longrightarrow} 0$, we have $Y_k\underset{k\to\infty}{\longrightarrow}\infty$. Multiplying \eqref{ef142} with \eqref{fl242} gives
\[
X_k^2-DY_k^2=(X_0^2-DY_0^2)(u^2-Dv^2)^k=C\cdot 1^k=C.
\] Thus, $(X_k,Y_k)$ solves \eqref{eb142}.
We have
\[
X_{k+1}+Y_{k+1}\sqrt D=(X_k+Y_k\sqrt D)(u+v\sqrt D).
\]
Thus, 
$X_{k+1}=uX_k+DvY_k$ and $Y_{k+1}=vX_k+uY_k$. Thus, inductively, since $v$ is even and $u$ is odd, $Y_{k+1}\equiv Y_k \equiv Y_0 \pmod 2$. Moreover, since $8\mid D$ and, inductively, $8\mid X_k$, we have $8\mid X_{k+1}$. Since $Y_k\to\infty$, the solutions $(X_k,Y_k)$ are infinitely many
and distinct.
\end{proof}

\begin{theorem}\label{th42}
Let $n\in\NN$. Then $0<N(n)<\infty$ if and only if $n=2m^2$ for some $m\in\NN$ with $m\geq 4$ and $3\nmid m$.
\end{theorem}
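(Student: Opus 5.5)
The plan is to compute $S_n(j)$ in closed form and turn ``$S_n(j)$ is a square'' into a quadratic Diophantine equation in two unknowns. Summing $T_{j+i}$ for $i=0,\dots,n-1$ and completing the square in $j$ should give
\[
S_n(j)=\frac{n}{24}\left(3(2j+n)^2+n^2-4\right).
\]
Put $x=2j+n$, so that $x\ge n$ and $x\equiv n\pmod 2$. Then $S_n(j)=y^2$ with $y\in\NN_0$ is equivalent to
\[
24y^2=3nx^2+n(n^2-4).
\]
After rescaling (for instance $X=12y$, possibly also $Y=2x$, chosen so that the hypothesis $8\mid D$ of Lemma~\ref{ll142} is met), this becomes a Pell-type equation $X^2-DY^2=C$ with $D$ a fixed multiple of $n$. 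This $D$ is a perfect square exactly when $2n$ is a square, i.e.\ when $n=2m^2$. So the proof splits into two cases.

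\emph{Case $n\neq 2m^2$.} Here $D$ is not a square. Assume $N(n)>0$ and take a value $j_0$ with $S_n(j_0)$ a square. This gives a solution $(X_0,Y_0)$ with $8\mid X_0$; the cases $n=1,2$ with small $y$ are handled by hand. Lemma~\ref{ll142} then yields infinitely many solutions $(X_k,Y_k)$ with $8\mid X_k$, with $Y_k\equiv Y_0\pmod 2$, and with $Y_k\to\infty$. The congruence conditions are exactly what is needed to recover an integer $x_k\equiv n\pmod 2$, and hence an integer $j_k=(x_k-n)/2\ge 0$, with $S_n(j_k)$ a square. Since $Y_k\to\infty$, infinitely many distinct $j_k$ arise, so $N(n)=\infty$. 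Hence $0<N(n)<\infty$ forces $n=2m^2$.

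\emph{Case $n=2m^2$.} The equation becomes $3(4y^2-m^2x^2)=4m^2(m^4-1)$, that is,
\[
(2y-mx)(2y+mx)=\tfrac{4}{3}m^2(m^4-1).
\]
The right-hand side is a positive integer for all $m\ge 2$, and the first factor on the left is then a positive divisor of it. Thus there are only finitely many pairs $(x,y)$, so $N(n)<\infty$ automatically. It remains to decide when $N(n)>0$. A solution corresponds to a factorization $de=\frac43m^2(m^4-1)$ with
\[
e-d=2mx,\qquad x\ge 2m^2,\qquad x \text{ even}.
\]
For $m\ge4$ with $3\nmid m$, I will exhibit an explicit factor pair. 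A first candidate is $d=2m$ and $e=\frac{2m(m^4-1)}{3}$, which gives $x=(m^4-4)/3$. This works when $m$ is even. For odd $m$, I will try a variant such as $d=4m$ or $d=m$ with a matching $e$, adjusting so that $x$ is even and $x\ge 2m^2$; the latter inequality is exactly where $m\ge4$ is used.

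For the converse, the values $m=1,2,3$ are checked directly. For $3\mid m$, I will use congruences modulo $3$ and modulo powers of $2$ on the factor pair, comparing $3$-adic valuations of $d$, $e$ and $2mx$, to show that no admissible pair exists. This last step, together with the construction for odd $m$, is where I expect the main obstacle: the $3\nmid m$ condition must come out of a delicate valuation and parity argument rather than the bare divisibility by $3$, which holds for every $m$.
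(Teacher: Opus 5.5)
Your outline follows the paper's proof step for step: the closed form, Lemma~\ref{ll142} when $8n$ is not a square, the factorization $(2y-mx)(2y+mx)=\tfrac43m^2(m^4-1)$ when $n=2m^2$, explicit solutions, and a $3$-adic exclusion. The genuine gap is that the step producing the condition $3\nmid m$ is only announced, and it is the crux of the ``only if'' direction. Your plan for it (congruences modulo powers of $2$, a ``delicate'' argument) misses the one observation that makes it work. Let $r=v_3(m)\ge 1$. Since $3\nmid m^4-1$, the right-hand side has $v_3\bigl(\tfrac43m^2(m^4-1)\bigr)=2r-1$, which is odd. On the other hand, $e-d=2mx$ forces $e\equiv d\pmod{3^r}$. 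If $v_3(d)<r$, then $v_3(e)=v_3(d)$ and $v_3(de)$ is even. If $v_3(d)\ge r$, then $v_3(de)\ge 2r$. Either way no factor pair exists, so $N(2m^2)=0$ whenever $3\mid m$. This already covers $m=3$ and uses no $2$-adic information. Add $N(2)=\infty$ (since $S_2(j)=(j+1)^2$) and $N(8)=0$ (the only factor pair $d<e$ of $80$ with $8\mid e-d$ is $(4,20)$, giving $x=4<8$), and the ``only if'' direction is complete.

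Two smaller points. First, for odd $m$ your candidate $d=4m$, $e=\tfrac13m(m^4-1)$ is the right one, and it gives the paper's solution. It yields $x=\tfrac{m^4-13}{6}$, which is even because $m^4\equiv1\pmod 8$ forces $v_2(m^4-13)=2$. It satisfies $x\ge 2m^2$ if and only if $m^2\ge 13$, and gives $y=\tfrac{m(m^4+11)}{12}$. The alternative $d=m$ fails, since it gives $x=\tfrac{4m^4-7}{6}\notin\ZZ$. Second, the rescaling $X=12y$ leads to $D=18n$, which is not divisible by $8$ in general. Recovering $y_k$ would also require $12\mid X_k$, which the lemma does not give. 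Use $X=8y$, $Y=x$, $D=8n$, $C=\tfrac{8n(n^2-4)}{3}$, as the paper does. Then the lemma's conclusion $8\mid X_k$ is exactly the integrality of $y_k=X_k/8$, and $Y_k\equiv n\pmod 2$ with $Y_k\to\infty$ gives infinitely many admissible $j_k$.
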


\begin{proof}
For $j\in\NN_0$, we have
\begin{equation}
S_n(j)=\sum_{k=j}^{j+n-1}\frac{k(k+1)}{2}=\frac{n(3j^2+3jn+n^2-1)}{6}.\label{esn42}
\end{equation} Suppose that $S_n(j)=x^2$ for some $x\in\NN_0$ and set $y=2j+n$.
Then $y\geq n$ and $y\equiv n\pmod 2$. Moreover, using
\eqref{esn42}, we obtain
\begin{align}
8x^2-ny^2&=8S_n(j)-n(2j+n)^2\nonumber\\
&=\frac{4n}{3}(3j^2+3jn+n^2-1)-n(2j+n)^2\nonumber\\
&=\frac{n(n^2-4)}{3}.\label{ew142}
\end{align}
Conversely, suppose that $x,y\in\NN_0$ satisfy \eqref{ew142} with
$y\geq n$ and $y\equiv n\pmod 2$. Set $j=\frac{y-n}{2}$. Then $j\in\NN_0$ and reversing the calculation above gives $S_n(j)=x^2$. It follows that
\[
N(n)=
\#\left\{
(x,y)\in\NN_0^2:
\begin{array}{l}
y\geq n,\quad y\equiv n\pmod 2,\\[1mm]
\displaystyle
8x^2-ny^2=\frac{n(n^2-4)}{3}
\end{array}
\right\}.
\]

Assume that $0<N(n)<\infty$ and suppose that $8n$ is not a square. If $n=1$, then $S_1(1)=1$. If $n\geq2$, then $S_n(j)>0$ for every $j\in\NN_0$. Thus, in either case, there exist $j\in\NN_0$ and $x\in\NN$ such that $S_n(j)=x^2$. Then $x$ and $y=2j+n$ solve \eqref{ew142}. Set $X_0=8x, Y_0=y, C=\frac{8n(n^2-4)}{3}$, and $D=8n$. Then $(X_0,Y_0)$ solves the equation $X^2-DY^2=C$. By Lemma~\ref{ll142}, the equation has infinitely many solutions
$(X_k,Y_k)$ such that
\[
\lim_{k\to\infty}Y_k=\infty,\qquad
Y_k\equiv Y_0\equiv n\pmod 2,
\qquad
8\mid X_k.
\]
Hence, for all sufficiently large $k$, we may set $j_k=\frac{Y_k-n}{2}$ and $x_k=\frac{X_k}{8}$. Reversing the preceding calculation gives $S_n(j_k)=x_k^2$. Since $Y_k\to\infty$, the integers $j_k$ take infinitely many distinct values. Thus, $N(n)=\infty$, contradicting
$N(n)<\infty$. Thus, $8n$ is a square and therefore $n=2m^2$, for some $m\in\NN$. With this \eqref{ew142} may be written as
\begin{equation}\label{ma142}
(2x-my)(2x+my)=\frac{4m^2(m^4-1)}{3}.
\end{equation}
Assume that $m=1$, i.e., $n=2$. Then $S_2(j)=(j+1)^2$, for every $j\in\NN_0$. Thus, $N(2)=\infty$ and therefore $m=1$ is impossible. Now assume that $m=2$, i.e., $n=8$. Then \eqref{ma142} becomes
$(x-y)(x+y)=20$. Since $x-y$ and $x+y$ have the same parity, they must both be even. Write $x-y=2u$ and $x+y=2v$, for some $u,v\in\NN$. Then $uv=5$ allowing only $(u,v)=(1,5)$ or $(u,v)=(5,1)$. The first case yields $y=4$ and the second $y=-4$, both are impossible since we need $y\geq 8$. Consequently $N(8)=0$ and therefore $m=2$ is impossible. Now assume that $m\geq 3$. In particular, the right-hand side of \eqref{ma142} is nonzero and it does not depend on $x$ and $y$. Thus, equation \eqref{ma142} has only finitely many solutions $(x,y)\in\NN_0^2$. We conclude that, for $n=2m^2$ with $m\geq 2$, we have $N(n)<\infty$. 

We now prove that if $3\mid m$ then $N(n)=0$. To this end, let $r=v_3(m)$. By assumption, $r\geq 1$. Suppose that $(x,y)\in\NN_0^2$ is a solution of \eqref{ma142}. Set $A=2x-my$ and $B=2x+my$. Then \[AB=\frac{4m^2(m^4-1)}{3}, \qquad B-A=2my.\] Thus, $3^r\mid (B-A)$ and therefore $A\equiv B \pmod{3^r}$. Since $3\nmid (m^4-1)$, we have
\begin{equation}\label{oddnumber}
v_3(AB)=v_3\left(\frac{4m^2(m^4-1)}{3}\right)=2r-1.
\end{equation} Now, if $v_3(A)<r$, then $v_3(B)=v_3(A)$ and therefore, $v_3(AB)=2v_3(A)$, an even number, contradicting \eqref{oddnumber}. On the other hand, if $v_3(A)\geq r$ then $v_3(B)\geq r$. Thus, $v_3(AB)\ge 2r>2r-1$, contradicting \eqref{oddnumber}. This concludes the proof that if $3\mid m$ then $N(n)=0$ and the proof of the ``only if" part of the statement is complete.

Conversely, assume that $n=2m^2$ with $m\in\NN$ such that $m\geq 4$ and $3\nmid m$. Set 
\[(x,y) = 
\begin{cases}
\left(\frac{m(m^4+2)}{6},\frac{m^4-4}{3}\right), &\textnormal{if $m$ is even},\\
\left(\frac{m(m^4+11)}{12},\frac{m^4-13}{6}\right), &\textnormal{if $m$ is odd}.
\end{cases}\]
In either case, $x,y\in\NN$. Furthermore, $y$ is even and therefore $y\equiv n\pmod 2$. In addition, since $m\geq 4$, we have $y\geq n$. Finally, direct calculations show that $x$ and $y$ satisfy equation \eqref{ew142}. Therefore, by the previous arguments, there exists $j\in\NN_0$ such that $S_n(j)=x^2$. Hence, $N(n)>0$. Since we have already proved that $N(n)<\infty$ for $n=2m^2$ with $m\geq2$, it follows that $0<N(n)<\infty$.
\end{proof}

\begin{corollary}\label{c142}
For $n\in\NN$ let $a(n)$ be the $n$th smallest $m\in\NN$ for which $0<N(m)<\infty$. Then
\[
a(n)=
\begin{cases}
\frac{9n^2+24n+16}{2}, & \textnormal{if $n$ is even},\\
\frac{9n^2+30n+25}{2}, & \textnormal{if $n$ is odd}.
\end{cases}
\]
\end{corollary}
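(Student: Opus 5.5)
By Theorem \ref{th42}, the set of $m\in\NN$ with $0<N(m)<\infty$ is exactly $\{2k^2 \;:\; k\in\NN,\ k\geq 4,\ 3\nmid k\}$. The map $k\mapsto 2k^2$ is strictly increasing on $\NN$. Therefore the $n$th smallest element of this set is $2c_n^2$, where $(c_n)_{n\in\NN}$ lists the integers $k\geq 4$ with $3\nmid k$ in increasing order. The plan is to find a closed form for $c_n$ and then square it.

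The integers $k\geq 4$ not divisible by $3$ are $4,5,7,8,10,11,\ldots$. They come in consecutive pairs $(3r+1,3r+2)$ for $r=1,2,\ldots$. Hence $c_{2r-1}=3r+1$ and $c_{2r}=3r+2$ for every $r\in\NN$. I would prove this by a short induction on $r$. The base case is $c_1=4$, $c_2=5$. For the step, note that the only integer strictly between $3r+2$ and $3(r+1)+1$ is $3(r+1)$, which is excluded.

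The formula then follows by substitution in each parity class.
\begin{itemize}
\item If $n$ is odd, write $n=2r-1$, so $r=\frac{n+1}{2}$. Then $c_n=3r+1=\frac{3n+5}{2}$, and
\[
a(n)=2\cdot\frac{(3n+5)^2}{4}=\frac{9n^2+30n+25}{2}.
\]
\item If $n$ is even, write $n=2r$. Then $c_n=3r+2=\frac{3n+4}{2}$, and
\[
a(n)=\frac{(3n+4)^2}{2}=\frac{9n^2+24n+16}{2}.
\]
\end{itemize}
Both expressions match the statement of the corollary.

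No step here is a real obstacle, since all the substance is in Theorem \ref{th42}. The only points needing care are the indexing of $c_n$ and the observation that ordering by $m$ coincides with ordering by $k$, which holds because $k\mapsto 2k^2$ is increasing.
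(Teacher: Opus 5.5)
Your proposal is correct and follows the paper's route: reduce via Theorem~\ref{th42} to listing $\{k\geq 4 : 3\nmid k\}$ in increasing order as $c_{2r-1}=3r+1$, $c_{2r}=3r+2$, then evaluate $2c_n^2$ in each parity class. The paper justifies the enumeration by showing this explicit sequence is strictly increasing and hits every element, where you use induction, and your closed forms for both parities check out.
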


\begin{proof}
By Theorem \ref{th42}, 
\[
\{m\in\NN\;:\;0<N(m)<\infty\}=\{2m^2\;:\; m\in\NN, m\geq 4, 3\nmid m\}.
\] Thus, it suffices to enumerate in increasing order the set
\[
\mathcal{M}=\{m\in\NN\;:\; m\geq 4, 3\nmid m\}
\]
and then apply the map $x\mapsto 2x^2$ to its elements. To this end, define a sequence $(m_n)_{n\in\NN}$ by $m_{2n}=3n+2$ and $m_{2n-1}=3n+1$, for $n\in\NN$. Clearly, $m_n\in\mathcal{M}$, for every $n\in\NN$. Furthermore, $(m_n)_{n\in\NN}$ is strictly increasing. Indeed, for every $n\in\NN$, we have
\[
m_{2n-1}=3n+1 < 3n+2=m_{2n} < 3n+4=m_{2n+1}.
\]
Conversely, let $m\in\mathcal{M}$. If $m\equiv 1\pmod 3$ then $m=3q+1$ for some $q\in\NN$. Thus, $m=m_{2q-1}$. If $m\equiv 2\pmod 3$ then $m=3q+2$ for some $q\in\NN$. Thus, $m=m_{2q}$. This concludes the proof that $a(n)=2m_n^2$, for every $n\in\NN$. Finally, let $q\in\NN$ and assume that $n=2q$. Then
\[
a(n)=2(3q+2)^2=2\left(\frac{3n}{2}+2\right)^2=\frac{9n^2+24n+16}{2}.
\]
Similarly, if $n=2q-1$, then
\[
a(n)=2(3q+1)^2
=2\left(\frac{3(n+1)}{2}+1\right)^2=\frac{9n^2+30n+25}{2}.\qedhere
\]
\end{proof}

\subsection{\texorpdfstring{\seqnum{A181176}}{}}

For $n\in\NN_0$ let $a_n$ denote the $n$th element of \seqnum{A181176}, which is defined by
\[a_n=\min_{k\in\{0,1,\ldots,n\}} \left|\sum_{j=1}^k j-\sum_{j=k+1}^n j\right|.\] 
Clearly, 
\[
a_n=\min_{k\in\{0,1,\ldots,n\}} |T_k-(T_n-T_k)|=\min_{k\in\{0,1,\ldots,n\}} |T_n-2T_k|.
\]
In the comments to \seqnum{A181176} Wilson v observes that certain numbers seem not to occur as values in the sequence. In this section we prove that this is indeed the case and identify the arithmetic reason for this. We begin by providing an almost closed form formula for $a_n$.

\begin{theorem}
Let $n\in\NN_0$ and let 
\[
k_n=\left\lfloor\frac{-1+\sqrt{1+2n(n+1)}}{2}\right\rfloor. 
\] Then 
\[
a_n=\min\{T_n-2T_{k_n}, 2T_{k_n+1}-T_n\}.
\]
\end{theorem}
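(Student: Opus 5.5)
We use that $k\mapsto T_k$ is strictly increasing on $\NN_0$, so $k\mapsto T_n-2T_k$ is strictly decreasing. The proof has three steps. First, we show that $k_n$ is the largest $k\in\NN_0$ with $2T_k\le T_n$. Second, we check that $0\le k_n\le n-1$ for $n\ge 1$, so that $k_n+1\in\{0,1,\ldots,n\}$. Third, the minimum of $|T_n-2T_k|$ over $k\in\{0,\ldots,n\}$ is then attained at $k=k_n$ or at $k=k_n+1$.

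For the first step, note that $2T_k\le T_n$ is equivalent to
\[
k^2+k-\tfrac{n(n+1)}{2}\le 0 .
\]
For $k\ge 0$ this is equivalent to $k\le \frac{-1+\sqrt{1+2n(n+1)}}{2}$, since the nonnegative root of $k^2+k-\frac{n(n+1)}{2}$ is exactly this quantity. Since $k$ is an integer, the condition is equivalent to $k\le k_n$. Hence
\[
2T_{k_n}\le T_n<2T_{k_n+1}.
\]

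For the second step, the case $n=0$ is trivial: $k_0=0$ and $a_0=0=T_0-2T_0$, while $2T_1-T_0=2$, so the formula holds. For $n\ge 1$ we have $1+2n(n+1)<(2n+1)^2$, which gives $k_n\le n-1$. The bound $k_n\ge 0$ is clear.

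For the third step, monotonicity gives the following. For $k\le k_n$,
\[
|T_n-2T_k|=T_n-2T_k\ge T_n-2T_{k_n}.
\]
For $k\ge k_n+1$,
\[
|T_n-2T_k|=2T_k-T_n\ge 2T_{k_n+1}-T_n.
\]
Both candidate indices lie in $\{0,\ldots,n\}$, so the minimum equals $\min\{T_n-2T_{k_n},\,2T_{k_n+1}-T_n\}$.

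The only delicate point is the floor/square-root equivalence in the first step. It is resolved by observing that for real $r\ge 0$ and integer $k\ge0$ we have $k\le r\iff k\le\lfloor r\rfloor$, so no precision issue with $\sqrt{1+2n(n+1)}$ arises.
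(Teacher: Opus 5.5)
Your proof is correct and follows the paper's route: strict monotonicity of $k\mapsto T_n-2T_k$, identifying $k_n$ as the largest index with $2T_k\le T_n$ via the quadratic inequality, the bound $k_n<n$, and then concluding that the minimum is attained at $k_n$ or $k_n+1$. Your explicit check $1+2n(n+1)<(2n+1)^2$ and your separate treatment of $n=0$ (where $k_0+1$ falls outside the index range) are slightly more careful than the paper's write-up, but the argument is the same.
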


\begin{proof}
Let $n,k\in\NN_0$ with $k\leq n$ and set $S_n(k)=T_n-2T_k$. For $n=0$ the statement holds trivially. Thus, assume that $n\geq 1$. We claim that the map $k\mapsto S_n(k)$ is
strictly decreasing on $\{0,1,\ldots,n\}$. Indeed, for $k\in \{0,1,\ldots,n-1\}$ we have
\begin{align*}
S_n(k+1)-S_n(k)
&=T_n-2T_{k+1}-(T_n-2T_k)\\
&=-2(T_{k+1}-T_k)\\
&=-2(k+1)<0.
\end{align*} Let
\begin{align*}
k_n&=\max\{k\in\{0,1,\dots,n\}\;:\;T_n-2T_k\geq 0\}\\
&=\max\left\{k\in\{0,1,\dots,n\}\;:k^2+k-\frac{n(n+1)}{2}\leq 0\right\}.
\end{align*}
Thus,
\[k_n =\left\lfloor\frac{-1+\sqrt{1+2n(n+1)}}{2}\right\rfloor. \] Notice that since $T_n-2T_n<0$, necessarily $k_n <n$.
Since $k\mapsto S_n(k)$ is strictly decreasing and since $S_n(k_n)\geq 0$ and $S_n(k_n+1) <0$, the minimum value of $|T_n-2T_k|$ is attained either at $k_n$ or at $k_n+1$, and the proof is complete.
\end{proof}

\begin{lemma}\label{l176}
Let $d\in\NN_0$. Then there exist $n,k\in\NN_0$ with $|T_n-2T_k|=d$
if and only if there exist odd $x,y\in\NN$ with
\begin{equation}\label{e3376}
x^2-2y^2\in\{8d-1,-8d-1\}.
\end{equation}
\end{lemma}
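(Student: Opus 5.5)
The plan is to complete the square in both triangular numbers at once, using the identity $8T_m=(2m+1)^2-1$, valid for every $m\in\NN_0$. This identity turns the equation $|T_n-2T_k|=d$ into the norm equation \eqref{e3376}, with the substitution $x=2n+1$, $y=2k+1$. The map $(n,k)\mapsto(2n+1,2k+1)$ is a bijection from $\NN_0^2$ onto the pairs of odd positive integers, so both directions will follow from one computation.

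The key step is to multiply $T_n-2T_k$ by $8$ and expand:
\[
8(T_n-2T_k)=\left((2n+1)^2-1\right)-2\left((2k+1)^2-1\right)=x^2-2y^2+1 .
\]
Hence, for any $n,k\in\NN_0$ and the corresponding odd $x,y\in\NN$, we get
\[
T_n-2T_k=\varepsilon d \iff x^2-2y^2=8\varepsilon d-1,\qquad \varepsilon\in\{1,-1\}.
\]
Since $|T_n-2T_k|=d$ means exactly that $T_n-2T_k=d$ or $T_n-2T_k=-d$, this gives the equivalence in the lemma.

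For the forward direction, I start from $n,k\in\NN_0$ with $|T_n-2T_k|=d$, set $x=2n+1$ and $y=2k+1$, and read off \eqref{e3376} from the displayed identity. For the converse, I start from odd $x,y\in\NN$ satisfying \eqref{e3376}, set $n=\frac{x-1}{2}$ and $k=\frac{y-1}{2}$, which lie in $\NN_0$ because $x$ and $y$ are odd and positive, and reverse the same identity.

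When $d=0$ the two values $8d-1$ and $-8d-1$ coincide. This causes no trouble, because the set in \eqref{e3376} then simply has one element.

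I expect no real obstacle here; the whole argument is the single algebraic identity above. The only point needing care is to notice that the lemma places no restriction $k\leq n$ on the pair. So there is nothing to check about the ordering of $x$ and $y$ when going back from the Pell-type equation.
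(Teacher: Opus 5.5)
Your proposal is correct and is essentially the paper's proof. Both use the substitution $x=2n+1$, $y=2k+1$ with $8T_m=(2m+1)^2-1$ to get $8(T_n-2T_k)=x^2-2y^2+1$, and reverse it via $n=\frac{x-1}{2}$, $k=\frac{y-1}{2}$.
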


\begin{proof}
Assume that there exist $n,k\in\NN_0$ with $|T_n-2T_k|=d$. Let $x=2n+1$ and $y=2k+1$. Then $x,y\in\NN$ and are both odd. Furthermore,
\begin{equation}\label{e9176}
T_n=\frac{x^2-1}{8},\qquad 2T_k=\frac{y^2-1}{4}.
\end{equation} Thus,
\[
d=|T_n-2T_k|
=\left|\frac{x^2-1}{8}-\frac{y^2-1}{4}\right|
=\frac{|x^2-2y^2+1|}{8},
\] from which \eqref{e3376} follows.

Conversely, assume that $x,y\in\NN$ are odd and satisfy \eqref{e3376}. Set $n=\frac{x-1}{2}$ and $k=\frac{y-1}{2}$. Then $k,n\in\NN_0$ and \eqref{e9176} holds. Thus, 
\[T_n-2T_k=\frac{x^2-1}{8}-\frac{y^2-1}{4}
=\frac{x^2-2y^2+1}{8}\in\{d,-d\}.\] Thus, $|T_n-2T_k|=d$, and the proof is complete.
\end{proof}

\begin{corollary}
Let $d\in\NN_0$ and assume that there exist primes $p,q$, each
congruent to $3$ or $5$ modulo $8$, such that
$v_p(8d-1)$ and $v_q(8d+1)$ are both odd. Then $d$ does not appear as a value of \seqnum{A181176}.
\end{corollary}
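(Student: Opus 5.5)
If $d$ is a value of \seqnum{A181176}, then $d=a_n=|T_n-2T_k|$ for some $n,k\in\NN_0$. So it suffices to show that, under the hypothesis, no such $n,k$ exist. By Lemma~\ref{l176}, this reduces to showing that neither $x^2-2y^2=8d-1$ nor $x^2-2y^2=-(8d+1)$ has a solution in odd positive integers $x,y$. In fact I would show these equations have no integer solutions at all, using only the prime $p$ for the first equation and $q$ for the second.

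The key arithmetic fact is the following. Let $r$ be a prime with $r\equiv 3,5\pmod 8$, let $x,y\in\ZZ$, and let $N=x^2-2y^2\neq 0$. Then $v_r(N)$ is even.

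I would prove this by descent on the $r$-adic valuation. Suppose $r\mid x^2-2y^2$. If $r\nmid y$, then $(xy^{-1})^2\equiv 2\pmod r$, so $2$ is a quadratic residue mod $r$. This contradicts the second supplement to quadratic reciprocity, since $\left(\frac{2}{r}\right)=-1$ exactly when $r\equiv 3,5\pmod 8$. Note that $r$ is odd, so inverting $y$ mod $r$ makes sense. Hence $r\mid y$, and then $r\mid x^2$ gives $r\mid x$. Writing $x=rx'$ and $y=ry'$, we get $N=r^2(x'^2-2y'^2)$. Repeating this argument removes factors of $r^2$ until the remaining cofactor is not divisible by $r$. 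The process terminates because $N\neq 0$, so $v_r(N)$ is even.

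Now apply this fact. Since $8d-1$ and $-(8d+1)$ are odd, they are nonzero. A solution of $x^2-2y^2=8d-1$ would force $v_p(8d-1)$ to be even, contradicting the hypothesis. Likewise, a solution of $x^2-2y^2=-(8d+1)$ would force $v_q(-(8d+1))=v_q(8d+1)$ to be even, again a contradiction. Thus \eqref{e3376} has no solution, so by Lemma~\ref{l176} there are no $n,k$ with $|T_n-2T_k|=d$. In particular, $d\neq a_n$ for every $n$.

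The only nontrivial input is the quadratic character of $2$. I expect the main obstacle to be stating the descent cleanly. Quoting the second supplement to quadratic reciprocity from \cite{B} should suffice. The rest is bookkeeping.
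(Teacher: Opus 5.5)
Your proof is correct. It uses the same outer reduction as the paper (via Lemma~\ref{l176}) but a different key arithmetic input.

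The paper quotes, from a comment in \seqnum{A035251}, the full characterization of the positive integers of the form $x^2-2y^2$. It then uses the identity $(x+2y)^2-2(x+y)^2=-(x^2-2y^2)$ to carry that characterization over to the negative target $-8d-1$.

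You prove only the necessary half of that characterization, directly, by descent from $\left(\frac{2}{r}\right)=-1$ for $r\equiv 3,5\pmod 8$. Your descent works for arbitrary $x,y\in\ZZ$ and any nonzero $N$, so the sign issue never arises and no sign-flip identity is needed.

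Your route is self-contained: it rests on a standard textbook fact rather than on an external characterization quoted from an OEIS comment, and it uses exactly what the argument requires. The paper's citation is shorter and also records the converse. That converse plays no role here. It also could not by itself show that a given $d$ occurs in \seqnum{A181176}, since $a_n$ is a minimum over $k$.

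One small wording point: $y$ is invertible modulo $r$ because $r$ is prime and $r\nmid y$. The oddness of $r$ is what the Legendre symbol needs, not what the inversion needs.
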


\begin{proof}
By a comment to \seqnum{A035251}, a number $n\in\NN$ is representable in the form $x^2 - 2y^2$, for some $x,y\in\NN_0$ if and only if for every prime factor $r$ of $n$, which is congruent to $3$ or $5$ modulo $8$, the exponent $v_r(n)$ is even. Moreover,
\[
(x+2y)^2-2(x+y)^2=-(x^2-2y^2).
\]
Thus, an integer is representable in the form $x^2-2y^2$ if and only if its negative is representable in this form. Hence, the same criterion applies to negative integers as well. 

Now, by Lemma~\ref{l176}, for $d$ to be representable as $|T_n-2T_k|$ it is necessary that either $8d-1$ or $-8d-1$ be representable as $x^2-2y^2$. By the assumptions and the comment, neither $8d-1$ nor $8d+1$ is representable as
$x^2-2y^2$. Since $-8d-1=-(8d+1)$, this holds for $-8d-1$ as well. Thus, neither $8d-1$ nor $-8d-1$ is representable
as $x^2-2y^2$. Hence, $d$ is not representable as $|T_n-2T_k|$ and
therefore cannot occur as a value of \seqnum{A181176}.
\end{proof}

\begin{example}
In the following table we verify that each number $d$ conjectured in \seqnum{A181176} to be absent from the sequence is indeed not a value of the sequence. In the prime factorization of $8d-1$ and $8d+1$ a prime factor which is congruent to $3$ or $5$ modulo $8$ and has odd exponent is written in large font.
\[
\renewcommand{\arraystretch}{1.15}
\begin{tabular}{c|l|l}
$d$ & $8d-1$ & $8d+1$\\
\hline
$7$   & $55=\bigprime{5}\cdot 11$
      & $57=\bigprime{3}\cdot 19$\\
$18$  & $143=\bigprime{11}\cdot 13$
      & $145=\bigprime{5}\cdot 29$\\
$23$  & $183=\bigprime{3}\cdot 61$
      & $185=\bigprime{5}\cdot 37$\\
$31$  & $247=\bigprime{13}\cdot 19$
      & $249=\bigprime{3}\cdot 83$\\
$37$  & $295=\bigprime{5}\cdot 59$
      & $297=\bigprime{3}^{3}\cdot 11$\\
$38$  & $303=\bigprime{3}\cdot 101$
      & $305=\bigprime{5}\cdot 61$\\
$40$  & $319=\bigprime{11}\cdot 29$
      & $321=\bigprime{3}\cdot 107$\\
$47$  & $375=\bigprime{3}\cdot 5^3$
      & $377=\bigprime{13}\cdot 29$\\
$52$  & $415=\bigprime{5}\cdot 83$
      & $417=\bigprime{3}\cdot 139$\\
$59$  & $471=\bigprime{3}\cdot 157$
      & $473=\bigprime{11}\cdot 43$\\
$67$  & $535=\bigprime{5}\cdot 107$
      & $537=\bigprime{3}\cdot 179$\\
$68$  & $543=\bigprime{3}\cdot 181$
      & $545=\bigprime{5}\cdot 109$\\
$70$  & $559=\bigprime{13}\cdot 43$
      & $561=\bigprime{3}\cdot 11\cdot 17$\\
$73$  & $583=\bigprime{11}\cdot 53$
      & $585=3^2\cdot \bigprime{5}\cdot 13$\\
$83$  & $663=\bigprime{3}\cdot 13\cdot 17$
      & $665=\bigprime{5}\cdot 7\cdot 19$\\
$86$  & $687=\bigprime{3}\cdot 229$
      & $689=\bigprime{13}\cdot 53$\\
$88$  & $703=19\cdot \bigprime{37}$
      & $705=\bigprime{3}\cdot 5\cdot 47$\\
$92$  & $735=\bigprime{3}\cdot 5\cdot 7^2$
      & $737=\bigprime{11}\cdot 67$\\
$98$  & $783=\bigprime{3}^{3}\cdot 29$
      & $785=\bigprime{5}\cdot 157$\\
$102$ & $815=\bigprime{5}\cdot 163$
      & $817=\bigprime{19}\cdot 43$
\end{tabular}
\]
\end{example}

\section{Generating functions}

\subsection{\texorpdfstring{\seqnum{A000122}}{}}

Let $A(x)=\theta_3(0,x)=1+2\sum_{n=1}^\infty x^{n^2}$, where $\theta_3(z,x)$ is the Jacobi Theta function (e.g., \cite[(10.7.3)]{AAR}). Let $\sigma(n)$ denote the sum of divisors function, i.e., if $n\in\NN$ then \[\sigma(n)=\sum_{d\mid n}d.\] It is well-known that $\sigma$ is multiplicative, i.e., $\sigma(mn)=\sigma(m)\sigma(n)$ for every $m,n\in\NN$ with $\gcd(m,n)=1$ (e.g. \cite[Theorem 6.3]{B}). Furthermore, by \cite[Theorem 6.2(b)]{B}, if $n=p_1^{k_1}p_2^{k_2}\cdots p_r^{k_r}$ is the prime factorization of $n\in\NN$, then
\begin{equation}\label{factorrr}
\sigma(n)=
\frac{p_1^{k_1+1}-1}{p_1-1}
\cdot
\frac{p_2^{k_2+1}-1}{p_2-1}
\cdots
\frac{p_r^{k_r+1}-1}{p_r-1}.
\end{equation} 
The statement of the following theorem was conjectured by Bala in \seqnum{A000122}. 
\begin{theorem}
\begin{equation}\label{ep1}
A(x)
=\exp\left(2\sum_{n=0}^\infty \frac{x^{2n+1}}{(2n+1)(1+x^{2n+1})}\right).
\end{equation}
\end{theorem}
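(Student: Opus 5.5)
Taking logarithms, it suffices to show that $\log A(x)$ equals the exponent on the right-hand side of \eqref{ep1}, as formal power series with zero constant term. The tool is the Jacobi triple product, which gives the standard product
\[
\theta_3(0,x)=\prod_{m=1}^\infty (1-x^{2m})(1+x^{2m-1})^2 .
\]
We take this as known, e.g.\ from \cite{AAR}. Taking logarithms and writing $\log(1-u)=-\sum_{k\ge1}u^k/k$, we get
\[
\log A(x)=-\sum_{m\ge1}\sum_{k\ge1}\frac{x^{2mk}}{k}+2\sum_{m\ge1}\sum_{k\ge1}\frac{(-1)^{k+1}x^{(2m-1)k}}{k}.
\]

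Next we expand the proposed exponent in the same way. Since $\frac{1}{1+u}=\sum_{j\ge0}(-1)^j u^j$, we have
\[
2\sum_{n\ge0}\frac{x^{2n+1}}{(2n+1)(1+x^{2n+1})}=2\sum_{n\ge0}\sum_{j\ge1}\frac{(-1)^{j+1}x^{(2n+1)j}}{2n+1}.
\]
We then compare the coefficients of $x^N$ on both sides, for each $N\ge1$. Write $N=2^a M$ with $M$ odd.

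\emph{Right-hand side.} The coefficient is $2\sum_{d}(-1)^{N/d+1}/d$, summed over the odd divisors $d$ of $N$. Since $N/d$ is even exactly when $a\ge1$, this equals $2\epsilon\,\sigma(M)/M$. Here $\epsilon=1$ if $a=0$ and $\epsilon=-1$ if $a\ge1$, and we use $\sum_{d\mid M}1/d=\sigma(M)/M$.

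\emph{Left-hand side.} The first double sum contributes $-\sum_{k}1/k$ over those $k$ with $2k\mid N$. This term is present only when $a\ge1$, and then $k$ runs over the divisors of $N/2$, so the contribution is $-\sigma(N/2)/(N/2)$. The second double sum contributes $2\sum_{k}(-1)^{k+1}/k$ over those $k\mid N$ with $N/k$ odd, that is, $k=2^a e$ with $e\mid M$. Such $k$ is odd exactly when $a=0$, so this sum equals $2\epsilon\,\sigma(M)/(2^a M)$.

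It remains to check equality of the two coefficients.
\begin{itemize}
\item For $a=0$ both sides equal $2\sigma(M)/M$.
\item For $a\ge1$, using \eqref{factorrr} and multiplicativity, $\sigma(N/2)/(N/2)=\sigma(M)(2^a-1)/(2^{a-1}M)$. The left-hand side is then
\[
-\frac{\sigma(M)}{M}\left(\frac{2^a-1}{2^{a-1}}+\frac{2}{2^a}\right)=-\frac{\sigma(M)}{M}\cdot\frac{2^{a+1}-2+2}{2^a}=-\frac{2\sigma(M)}{M},
\]
which matches the right-hand side.
\end{itemize}
Exponentiating then gives \eqref{ep1}.

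The step I expect to be the main obstacle is the divisor bookkeeping in the $a\ge1$ case. One must get the signs $(-1)^{k+1}$ exactly right and handle the $2$-part of $\sigma$ correctly. The triple-product input itself is standard and is simply cited.
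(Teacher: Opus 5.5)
Your proof is correct and takes essentially the same approach as the paper: the triple-product factorization of $\theta_3(0,x)$, logarithmic expansion of both sides, and coefficient comparison through odd and even divisor sums, using multiplicativity of $\sigma$ and $\sigma(2^{a-1})=2^a-1$. The only difference is bookkeeping: you evaluate each coefficient directly as a multiple of $\sigma(M)/M$, whereas the paper clears the factor $1/m$ and verifies the equivalent divisor identity \eqref{z1A122} separately for odd and even $m$.
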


\begin{proof}
By the triple product identity (e.g., \cite[(10.4.1)]{AAR}),
\[
A(x)
=\prod_{n=1}^\infty (1-x^{2n})(1+x^{2n-1})^2.
\]
Taking logarithms on both sides yields
\begin{equation}\label{e1A122}
\log A(x)
=\sum_{n=1}^\infty\log(1-x^{2n})
+2\sum_{n=1}^\infty\log(1+x^{2n-1}).
\end{equation}
Using the standard power series
\[
\log(1-t)=-\sum_{k=1}^\infty\frac{t^k}{k},\qquad
\log(1+t)=\sum_{k=1}^\infty\frac{(-1)^{k+1}}{k}t^k,
\]
we obtain from \eqref{e1A122}
\[
\log A(x)
=-\sum_{n=1}^\infty \sum_{k=1}^\infty\frac{x^{2nk}}{k}
+2\sum_{n=1}^\infty\sum_{k=1}^\infty\frac{(-1)^{k+1}}{k}x^{(2n-1)k}.
\]
Write $\log A(x) = \sum_{m=1}^\infty (c^{(1)}_m+c^{(2)}_m) x^m$, where $c^{(1)}_m$ and $c^{(2)}_m$ are the coefficients of $x^m$ in the first and second double sum, respectively.
Let $m\in\NN$. If $m$ is odd, then $c^{(1)}_m=0$. Thus, assume that $m$ is even. Then
\[
c^{(1)}_m
=-\sum_{\substack{n,k\in\NN\\2nk=m}}\frac{1}{k}=-\sum_{\substack{n\in\NN\\n\mid \frac{m}{2}}}\frac{1}{\frac{m}{2n}}
   =-\frac{1}{m}\sum_{\substack{n\in\NN\\n\mid \frac{m}{2}}}2n
   =-\frac{1}{m}\sum_{\substack{d\mid m\\ d\textnormal{ even}}}d.
\] 
Now, let $m\in\NN$. We have\[
c^{(2)}_m
=2\sum_{\substack{n,k\in\NN\\ (2n-1)k=m}}\frac{(-1)^{k+1}}{k}=2\sum_{\substack{d\mid m\\ d \text{ odd}}}
   \frac{(-1)^{\frac{m}{d}+1}}{\frac{m}{d}}
=\frac{2}{m}\sum_{\substack{d\mid m\\ d \text{ odd}}}
   (-1)^{\frac{m}{d}+1}d.
\] It follows that 
\begin{equation}\label{eq:CNA122}
\log A(x)=-\sum_{m=1}^\infty
\frac{1}{m}\left(\sum_{\substack{d\mid m\\ d\textnormal{ even}}}d+2\sum_{\substack{d\mid m\\ d \text{ odd}}}
   (-1)^{\frac{m}{d}}d\right)x^m.
\end{equation}

Now, let $B(x)$ denote the function on the right-hand side of \eqref{ep1}. For $n\in\NN$ we have the expansion $\frac{1}{1+x^n}=\sum_{k=0}^\infty (-1)^k x^{nk}$. Thus,
\[
\log B(x)
=2\sum_{\substack{n\in\NN\\ n\text{ odd}}}
\frac{x^n}{n(1+x^n)}=2\sum_{\substack{n\in\NN\\ n\text{ odd}}}
\sum_{k=1}^\infty\frac{(-1)^{k-1}}{n}x^{nk} = -\sum_{m=1}^\infty \left(2\sum_{\substack{d\mid m\\ d\text{ odd}}}
  \frac{(-1)^{\frac{m}{d}}}{d}\right) x^m.
\] It remains to show that, for every $m\in\NN$, we have 
\[\frac{1}{m}\left(\sum_{\substack{d\mid m\\ d\textnormal{ even}}}d+2\sum_{\substack{d\mid m\\ d \text{ odd}}}
   (-1)^{\frac{m}{d}}d\right)=2\sum_{\substack{d\mid m\\ d\text{ odd}}}
  \frac{(-1)^{\frac{m}{d}}}{d},\] or, equivalently, that 
\begin{equation}\label{z1A122}
\sum_{\substack{d\mid m\\ d\textnormal{ even}}} d
=
2\sum_{\substack{d\mid m\\ d\textnormal{ odd}}}
(-1)^{\frac{m}{d}}\left(\frac{m}{d}-d\right).
\end{equation}
Let $L_m$ and $R_m$ denote the left-hand side and right-hand side of \eqref{z1A122}, respectively. First assume that $m$ is odd. Then $L_m=0$. On the other hand, all divisors of $m$ are odd. Hence, we need to show that 
\[
\sum_{d\mid m}
(-1)^{\frac{m}{d}}\left(\frac{m}{d}-d\right)=0.
\] Let $d$ be a divisor of $m$. Since $m$ is odd, $\frac{m}{d}$ is odd as well. Thus, $(-1)^{\frac{m}{d}}=-1$. Furthermore, the map $x\mapsto \frac{m}{x}$ is a bijection on the set of divisors of $m$. It follows that
\[
\sum_{d\mid m}
(-1)^{\frac{m}{d}}\left(\frac{m}{d}-d\right)
= \sum_{d\mid m} d-\sum_{d\mid m} \frac{m}{d}=0.
\] This concludes the proof that $R_m=L_m$, for odd $m$. Assume now that $m$ is even. Then, $m = 2^kn$, for some $k,n\in\NN$, with $n$ odd. Since $m$ is even, the even divisors of $m$ are precisely the numbers $2d$, where $d\mid \frac{m}{2}$. Hence,
\[
L_m=\sum_{d \mid \frac{m}{2}}2d=2\sigma\left(\frac{m}{2}\right)=2\sigma(2^{k-1}n). 
\] Since $\gcd(2^{k-1}, n)=1$, by the multiplicativity of $\sigma$, we have $\sigma(2^{k-1}n)=\sigma(2^{k-1})\sigma(n)$. By \eqref{factorrr}, $\sigma(2^{k-1})=2^k-1$. Thus, $L_m=2(2^{k}-1)\sigma(n)$. Now consider $R_m$. The odd divisors of $m$ are exactly the divisors of $n$. Furthermore, $\frac{m}{d}$ is even, for every odd divisor $d$ of $m$. Thus, $(-1)^{\frac{m}{d}}=1$. It follows that 
\begin{align*}
R_m&=2\sum_{d\mid n}
\left(\frac{m}{d}-d\right)\\
&=2\left(\frac{m}{n}\sum_{d\mid n}\frac{n}{d}-\sigma(n)\right)\\
&=2\left(\frac{m}{n}-1\right)\sigma(n)\\
&=2(2^k-1)\sigma(n).
\end{align*}
Thus, $R_m=L_m$ for even $m$ and the proof is complete.
\end{proof}

\subsection{\texorpdfstring{\seqnum{A026725}, \seqnum{A236830}, \seqnum{A108080}, and \seqnum{A026847}}{}}

Let $T(n,k)$ be the triangular array defined as follows. For $n\in\NN_0$ set $T(n,0)=T(n,n)=1$ and, for $n\in\NN$ and $k\in[n]$, let
\[
T(n+1,k)=
\begin{cases}
T(n,k-1)+T(n-1,k-1)+T(n,k),
& \text{if $k=\frac{n}{2}$,}\\
T(n,k-1)+T(n,k),
& \text{otherwise.}
\end{cases}
\] Let $C(x)$ denote the Catalan generating function. As is well-known, 
\begin{equation}\label{catalani}
C(x)=\frac{1-\sqrt{1-4x}}{2x}, \qquad C(x)=1+xC(x)^2.
\end{equation} The statement of Theorem \ref{A108080} was conjectured by Stephan in \seqnum{A108080}. We shall need the following result, which establishes a connection between the array $T(n,k)$ and the Riordan array $R(n,k)$ defined by
\begin{equation}\label{bb180}
R(n,k)=[x^n]\frac{(xC(x))^k}{1-xC(x)^3},\qquad n,k\in\NN_0.
\end{equation}

\begin{proposition}
Let $r,n\in\NN_0$ with $n\geq r$. Then
\[
T(2n,n+r)=R(n+r+1,2r+1).
\]
\end{proposition}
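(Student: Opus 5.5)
We would prove the Proposition by induction on $n$, showing that both sides satisfy the same two-step (row $2n\to 2n+2$) recurrence with the same boundary values. Set $U(n,r)=T(2n,n+r)$ for $0\le r\le n$, and $U(n,r)=0$ for $r>n$. The boundary values match: $U(n,n)=T(2n,2n)=1=R(2n+1,2n+1)$, since $[x^{m}](xC)^m/(1-xC^3)=1$. Also $R(2n+2,2n+3)=0$, which matches $U(n,n+1)=0$.

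\emph{Recurrence for $T$.} The exceptional rule only affects the entry $T(2m+1,m)$. So for $r\ge1$, applying the Pascal rule twice gives
\[
U(n+1,r)=U(n,r-1)+2U(n,r)+U(n,r+1).
\]
Here the only odd-row entries used are $T(2n+1,n+r-1)$, $T(2n+1,n+r)$ and $T(2n+1,n+r+1)$ with $n+r-1\ge n$; the case $n+r-1=n$ is the exceptional entry, but $T(2n+1,n+r)$ for $r\ge 1$ is computed by the ordinary rule. We will check this bookkeeping carefully. For $r=0$ we get
\[
U(n+1,0)=T(2n,n-1)+T(2n-1,n-1)+2U(n,0)+U(n,1),
\]
which involves the left-half quantity $T(2n,n-1)+T(2n-1,n-1)$.

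\emph{Recurrence for $R$.} Write $f=xC(x)$ and $g=1/(1-xC^3)$. By \eqref{catalani}, $f^2=f-x$. Let
\[
G_r(x)=\sum_{n\ge0}R(n+r+1,2r+1)x^n=x^{-r-1}gf^{2r+1}.
\]
The claimed identity $(G_r-G_r(0))/x=G_{r-1}+2G_r+G_{r+1}$ reduces, after multiplying by $x^{r+2}/(gf^{2r-1})$, to
\[
f^2=x^2+2xf^2+f^4.
\]
This follows at once from $f^4=(f-x)^2=f^2-2xf+x^2$ together with $f^2-f=-x$. We still have to check that the constant-term adjustment is consistent, i.e.\ that $x^{-r}f^{2r-1}$ is a genuine power series for $r\ge1$; it is, since $f=x+O(x^2)$. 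Hence $R(n+r+2,2r+1)$ satisfies the same recurrence for $r\ge1$, and induction on $n$ gives $U(n,r)=R(n+r+1,2r+1)$ for all $r\ge1$.

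\emph{Main obstacle: the column $r=0$.} For $r=0$ the $T$-recurrence involves the left half of the triangle, which the statement says nothing about. We would first try to obtain the column-$0$ generating function from the $R$ side, namely $G_0=gf/x=gC$. We would then seek a closed description of the left-half entries $T(2n,n-1)$ and $T(2n-1,n-1)$: below the diagonal only the Pascal rule applies, so these entries are determined by the values $T(2m+1,m)$ fed in at the center. This can be expressed as a lattice-path sum over first visits to the center, whose generating function involves $C(x)$ via the usual first-passage (reflection) decomposition. An alternative we would try in parallel is to use the total row sums $\sum_k T(n,k)$, which obey $S_{n+1}=2S_n+T(n-1,n/2-1)$ for even $n$. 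Subtracting the already established right-half values $r\ge1$ then isolates the combination needed in column $0$. We would verify either route on the small rows $1;\,1\,1;\,1\,2\,1;\,1\,4\,3\,1;\,1\,5\,7\,4\,1$ before committing to it.
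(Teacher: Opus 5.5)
Your right-half computations are correct. For $r\ge1$ the entry $T(2n+2,n+1+r)$ comes from two applications of the Pascal rule, because the exceptional entry $T(2n+1,n)$ feeds only $T(2n+2,n)$ and $T(2n+2,n+1)$. This gives $U(n+1,r)=U(n,r-1)+2U(n,r)+U(n,r+1)$ with $U(n,n+1)=0$. On the $R$-side the same recurrence reduces to $f^2=(x+f^2)^2$, which holds since $f=x+f^2$. But the conclusion you draw does not follow: the case $r=1$ of the recurrence involves $U(n,0)$. So the columns are coupled, and the induction on $n$ must be run for all $r\ge0$ at once. Nothing, not even the range $r\ge1$, is proved until column $0$ is settled, and that is exactly the part your proposal leaves as a plan.

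Column $0$ carries the real content of the statement. Its recurrence contains $T(2n,n-1)+T(2n-1,n-1)$, that is, data from the left half and the exceptional increments $a_m=T(2m-1,m-1)$. These increments are self-referential (the amount added at $(2m+1,m)$ is the earlier entry $T(2m-1,m-1)$ of the triangle), so they have to be solved for. Your row-sum fallback does not isolate $U(n,0)$: removing the right half from $\sum_kT(2n,k)$ still leaves the whole left half, and the row-sum recurrence contains $T(2m-1,m-1)$ again.

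The paper supplies precisely this missing input. It imports $A(x)=\sum_{m\ge1}T(2m-1,m-1)x^m=xC^3/(1-xC^3)$ from \seqnum{A026674} and encodes the entire triangle as $F(z,y)=(1+zA(yz^2))/(1-z(1+y))$. It then extracts all Laurent coefficients in $y$ of $E(x/y,y)$ simultaneously, so both halves come out together (for instance $\sum_nT(2n,n-1)x^n=xC^4/(1-xC^3)$) and no column coupling ever arises. Given these two series, your column-$0$ recurrence becomes $C^2+C^3=xC^4+2xC^3+2C$, which follows from $xC^2=C-1$. At that point, though, the same computation already yields every column directly.

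If you prefer not to cite the OEIS for $A$, note that $T(n,k)=\binom{n}{k}+\sum_{m\ge1,\,2m+1\le n}a_m\binom{n-2m-1}{k-m}$, since the injected amounts never disturb the boundary values. Evaluating this at $(2j+1,j)$ gives $A(x)=x(1-4x)^{-1/2}\bigl(C(x)+A(x)\bigr)$, whose solution is $xC^3/(1-xC^3)$.
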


\begin{proof}
By a formula in \seqnum{A026674}, the generating function $A(x)$ of $(T(2m-1,m-1))_{m\in\NN}$ is given by
\begin{equation}\label{ek180}
A(x)=\frac{xC(x)^3}{1-xC(x)^3}.
\end{equation} For $n\in\NN_0$ set
$P_n(y)=\sum_{k=0}^n T(n,k)y^k$. We claim that for every $n\in\NN$ we have
\begin{align}
P_n(y)&=(1+y)P_{n-1}(y)\nonumber\\
&+
\begin{cases}
T(2m-1,m-1) y^m, & \text{if } n=2m+1 \text{ for some } m\in\NN,\\
0, & \text{otherwise.}
\end{cases} \label{e180}
\end{align} Indeed,
\[
(1+y)P_{n-1}(y)
=\sum_{k=0}^{n-1}T(n-1,k)y^k+\sum_{k=0}^{n-1}T(n-1,k)y^{k+1}.
\]
Thus, for every $k\in\NN_0$ with $k\leq n$,
\[
[y^k](1+y)P_{n-1}(y)=
\begin{cases}
T(n-1,0), & \textnormal{if } k=0,\\
T(n-1,k)+T(n-1,k-1), & \textnormal{if } k\in[n-1],\\
T(n-1,n-1), & \textnormal{if } k=n.
\end{cases}
\]
Since $T(n,0)=T(n,n)=T(n-1,0)=T(n-1,n-1)=1$, the coefficients of $y^0$ and $y^n$ in $P_n(y)$ and in $(1+y)P_{n-1}(y)$ agree. Let $k\in[n-1]$ such that $k\neq \frac{n-1}{2}$. By definition of $T(n,k)$, 
\[
T(n,k)=T(n-1,k-1)+T(n-1,k).
\] Thus, $[y^k]P_n(y)=[y^k](1+y)P_{n-1}(y)$. Now let $k=\frac{n-1}{2}$. Then $n=2k+1$ and by definition of $T(n,k)$,
\[
T(n,k)=T(n-1,k-1)+T(n-1,k)+T(2k-1,k-1).
\] Thus, $[y^k]P_n(y)=[y^k]((1+y)P_{n-1}(y)+T(2k-1,k-1)y^k)$. This concludes the proof of \eqref{e180}.

Now let $F(z,y)=\sum_{n=0}^\infty P_n(y)z^n$ be the generating function of $(P_n(y))_{n\in\NN_0}$. From \eqref{e180} it follows that
\[
F(z,y)=1+z(1+y)F(z,y)+zA(yz^2).
\]
Thus,
\begin{equation}\label{ez180}
F(z,y)=\frac{1+zA(yz^2)}{1-z(1+y)}.
\end{equation} Set
\[
E(t,y)=\sum_{n=0}^\infty P_{2n}(y)t^n.
\]
Using \eqref{ez180}, we have 
\begin{align*}
E(z^2,y)&=\sum_{n=0}^\infty P_{2n}(y)z^{2n}\\
&=\frac{F(z,y)+F(-z,y)}{2}\\
&=\frac12\left(\frac{1+zA(yz^2)}{1-z(1+y)}+\frac{1-zA(yz^2)}{1+z(1+y)}\right)\\
&=\frac{1+z^2(1+y)A(yz^2)}{1-z^2(1+y)^2}.
\end{align*} Substituting $z^2$ by $t$ and using \eqref{ek180} we obtain
\begin{equation}\label{el180}
E(t,y)=\frac{1+t(1+y)A(yt)}{1-t(1+y)^2}=\frac{1+t(1+y)\dfrac{ytC(yt)^3}{1-ytC(yt)^3}}{1-t(1+y)^2}.
\end{equation}
It follows that
\begin{align}
E(\tfrac{x}{y},y)&=\sum_{n=0}^\infty P_{2n}(y)x^ny^{-n}\nonumber\\
&=\sum_{n=0}^\infty\sum_{k=0}^{2n} T(2n,k)x^n y^{k-n}\nonumber\\ &=\sum_{r=-\infty}^\infty\left(\sum_{n=|r|}^\infty T(2n,n+r)x^n\right)y^r.\label{err80}
\end{align}
On the other hand, by \eqref{el180}, 
\begin{align*}
E(\tfrac{x}{y},y)&=\frac{1+\dfrac{x(1+y)}{y}\dfrac{xC(x)^3}{1-xC(x)^3}}{1-\dfrac{x}{y}(1+y)^2}\\
&=\frac{y(1-xC(x)^3)+x^2(1+y)C(x)^3}
{(1-xC(x)^3)(y-x(1+y)^2)}\\
&=\frac{C(x)}{1-xC(x)^{3}}\left(\frac{1}{1-xC(x)^{2}y}+\frac{xC(x)^{3}}{y-xC(x)^{2}}\right)\\
&=\frac{C(x)}{1-xC(x)^{3}}\left(\sum_{r=0}^\infty(xC(x)^2)^r y^r +xC(x)^{3}\sum_{r=0}^\infty(xC(x)^2)^r y^{-r-1}\right).
\end{align*}
From this and from \eqref{err80} it follows that for every $k\in\NN_0$,
\begin{equation}\label{gl180}
\sum_{n=k}^\infty T(2n,n+k)x^n =[y^k]E(\tfrac{x}{y},y)=
\frac{x^k C(x)^{2k+1}}{1-xC(x)^3}.
\end{equation} From this and from \eqref{bb180} it follows that for every $r,n\in\NN_0$ with $n\geq r$ we have
\begin{align*}
R(n+r+1,2r+1)&=[x^{n+r+1}]\frac{(xC(x))^{2r+1}}{1-xC(x)^3}\\
&=[x^n]\frac{x^rC(x)^{2r+1}}{1-xC(x)^3}\\
&=T(2n,n+r).\qedhere
\end{align*}
\end{proof}

\begin{theorem}\label{A108080}
Let $n\in\NN_0$. Then
\[\sum_{k=0}^n T(2n,n+k)=\sum_{i=0}^n\binom{2n+i}{n-i}.\]
\end{theorem}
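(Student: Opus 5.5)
Both sides will be computed as coefficients of $x^n$ in explicit power series in $x$, and the two series will then be matched. For the left-hand side we use \eqref{gl180} from the proof of the preceding proposition: for every $k\in\NN_0$, $\sum_{n\ge k}T(2n,n+k)x^n=\frac{x^kC(x)^{2k+1}}{1-xC(x)^3}$. Since $T(2n,n+k)$ is only defined for $k\le n$, we sum over $k\ge 0$ and interchange the order of summation, which is legitimate as formal power series. This gives
\[
\sum_{n=0}^\infty\left(\sum_{k=0}^n T(2n,n+k)\right)x^n=\frac{C(x)}{1-xC(x)^3}\sum_{k=0}^\infty (xC(x)^2)^k=\frac{C(x)}{(1-xC(x)^3)(1-xC(x)^2)}.
\]
Using $C=1+xC^2$ from \eqref{catalani}, we have $1-xC^2=2-C$. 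The factor $1-xC^3$ should also simplify: $xC^3=C(C-1)$, so $1-xC^3=1+C-C^2$. Thus the left-hand generating function is $\frac{C}{(2-C)(1+C-C^2)}$.

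For the right-hand side we find $\sum_n\sum_{i=0}^n\binom{2n+i}{n-i}x^n$ via Lagrange inversion in the form
\[
\sum_{m}\binom{2m+a}{m}x^m=\frac{C(x)^a}{\sqrt{1-4x}}.
\]
We set $m=n-i$, so that $2n+i=2m+3i$. The inner sum becomes $\sum_{i\ge0}\sum_{m\ge0}\binom{2m+3i}{m}x^{m+i}$, and therefore
\[
\sum_{i\ge 0}x^i\frac{C(x)^{3i}}{\sqrt{1-4x}}=\frac{1}{\sqrt{1-4x}\,(1-xC(x)^3)}.
\]
The terms with $i>n$ would have $m<0$, so they vanish automatically.

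It then remains to verify the identity
\[
\frac{C}{1-xC^2}=\frac{1}{\sqrt{1-4x}}.
\]
This follows from $\sqrt{1-4x}=1-2xC$ together with $1-xC^2=2-C$: we need $C(1-2xC)=2-C$, that is, $2C-2=2xC^2$, which is exactly $C=1+xC^2$. With this, the factors $1-xC^3$ cancel and the two generating functions coincide, which proves the theorem.

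The main obstacle I expect is justifying the binomial generating function $\sum_m\binom{2m+a}{m}x^m=C^a/\sqrt{1-4x}$. I would derive it by Lagrange inversion applied to $w=xC(x)$, which satisfies $w=x(1+w)^2$ (equivalently, it is a standard identity that can be cited). The second point needing care is the reindexing of the double sum on the right-hand side, where the ranges of $i$ and $m$ must match exactly.
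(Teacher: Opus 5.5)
Your argument is correct. For the left-hand side it coincides with the paper's proof: you interchange the sums, apply \eqref{gl180}, sum the geometric series in $xC(x)^2$, and use $C/(1-xC^2)=1/\sqrt{1-4x}$. The rewriting $1-xC^3=1+C-C^2$ is harmless but not needed.

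The one real difference is on the right-hand side. The paper simply invokes a formula recorded in \seqnum{A108080} for the generating function of $\sum_{i=0}^n\binom{2n+i}{n-i}$. You instead derive it from the standard identity $\sum_{m\ge 0}\binom{2m+a}{m}x^m=C(x)^a/\sqrt{1-4x}$, using the substitution $m=n-i$, $2n+i=2m+3i$. This makes the proof self-contained rather than dependent on an OEIS entry.

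One correction to your sketch of that identity: $w=xC(x)$ satisfies $w=x/(1-w)$, not $w=x(1+w)^2$. The series satisfying $w=x(1+w)^2$ is $w=C(x)-1=xC(x)^2$. With that $w$, use Lagrange inversion in the form $\sum_{n\ge 0}[t^n]\bigl(H(t)\phi(t)^n\bigr)x^n=H(w)/\bigl(1-x\phi'(w)\bigr)$, with $\phi(t)=(1+t)^2$ and $H(t)=(1+t)^a$. This gives $C^a/(1-2xC)=C^a/\sqrt{1-4x}$, which is exactly what you need. Alternatively, cite the identity as the standard one it is.
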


\begin{proof}
Interchanging sums, we have
\[
\sum_{n=0}^\infty\sum_{k=0}^n T(2n,n+k)x^n=\sum_{k=0}^\infty\sum_{n=k}^\infty T(2n,n+k)x^n.
\]
With \eqref{gl180} we obtain
\begin{align*}
\sum_{k=0}^\infty\sum_{n=k}^\infty T(2n,n+k)x^n&=\sum_{k= 0}^\infty\frac{x^k C(x)^{2k+1}}{1-xC(x)^3}\\
&=\frac{C(x)}{1-xC(x)^3}\sum_{k=0}^\infty(xC(x)^2)^k\\
&=\frac{C(x)}{(1-xC(x)^3)(1-xC(x)^2)}\\
&=\frac{1}{\sqrt{1-4x}(1-xC(x)^3)},
\end{align*} where in the last transition we used \eqref{catalani}.
By a formula in \seqnum{A108080}, this is exactly the generating function of the numbers $\sum_{i=0}^n\binom{2n+i}{n-i}$, and the proof is complete.
\end{proof}

\subsection{\texorpdfstring{\seqnum{A158110}}{}}

Let $q,m\in\NN_0$ and define
\[
F_{q,m}(x)=
\exp\left(\sum_{n=1}^\infty\frac{q^{n^m}}{n}x^n\right).
\] The statement of the following theorem was conjectured by Hanna in \seqnum{A158110}. The case $m=2$ was proved by Bala in \seqnum{A155200}. Recall (e.g., \cite[Definition 6.3]{B}) that the M\"obius function $\mu$ is defined as follows. For $n\in\NN$ let
\[
\mu(n)=
\begin{cases}
1, & \text{if } n=1,\\
0, & \text{if } p^2\mid n \text{ for some prime } p,\\
(-1)^r, & \text{if } n=p_1p_2\cdots p_r,
\text{ where } p_i \text{ are distinct primes.}
\end{cases}
\] 

\begin{theorem}
The formal power series of the function $F_{q,m}(x)$ has integer coefficients.
\end{theorem}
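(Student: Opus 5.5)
The plan is to use the standard criterion for integrality of an exponential of a logarithmic series. Write $a_n=q^{n^m}$, so that $F_{q,m}(x)=\exp\bigl(\sum_{n\ge1}\frac{a_n}{n}x^n\bigr)$.

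The first step is to define, via M\"obius inversion,
\[
b_d=\frac{1}{d}\sum_{e\mid d}\mu\!\left(\frac{d}{e}\right)a_e .
\]
Then $a_n=\sum_{d\mid n} d\,b_d$, and we can rewrite the exponent:
\[
\sum_{n\ge1}\frac{a_n}{n}x^n=\sum_{d\ge1} b_d\sum_{k\ge1}\frac{x^{dk}}{k}=-\sum_{d\ge1} b_d\log(1-x^d).
\]
Hence, formally,
\[
F_{q,m}(x)=\prod_{d\ge1}(1-x^d)^{-b_d}.
\]
If every $b_d$ is an integer, each factor $(1-x^d)^{-b_d}$ is a power series with integer coefficients; for negative $b_d$ it is a polynomial. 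The product is formally convergent, since the $d$th factor is $1+O(x^d)$. This proves the theorem once integrality of the $b_d$ is established. The inversion step uses Lemma~\ref{muu1}, exactly as in the proof of Theorem~\ref{Steph01}.

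The main obstacle is proving $d\mid\sum_{e\mid d}\mu(d/e)\,q^{e^m}$ for all $d$. I would first reduce to prime powers $p^k\| d$. Write $d=p^k s$ with $p\nmid s$. Since $\mu(d/e)=0$ unless $d/e$ is squarefree, pair each divisor $e=p^{k}t$ (with $t\mid s$) with $e'=p^{k-1}t$. This gives
\[
\sum_{e\mid d}\mu(d/e)\,q^{e^m}=\sum_{t\mid s}\mu(s/t)\bigl(q^{(p^kt)^m}-q^{(p^{k-1}t)^m}\bigr).
\]
It therefore suffices to show $q^{N p^{km}}\equiv q^{N p^{(k-1)m}}\pmod{p^k}$ for every $N=t^m\ge1$.

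This congruence is the key lemma. Setting $c=q^{Np^{(k-1)m}}$, it reads $c^{p^{m}}\equiv c\pmod{p^k}$ once we write $p^{km}$ as $p^{(k-1)m}\cdot p^m$. I would prove it in two cases.

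\textbf{Case $p\mid q$.} Both sides are divisible by $p^{Np^{(k-1)m}}$. The exponent satisfies $p^{(k-1)m}\ge p^{k-1}\ge k$, so both sides are $0$ modulo $p^k$.

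\textbf{Case $p\nmid q$.} Here $q^{\varphi(p^k)}\equiv1\pmod{p^k}$ by Euler's theorem. The exponent difference $N(p^{km}-p^{(k-1)m})=Np^{(k-1)m}(p^m-1)$ is divisible by $p^{k-1}(p-1)=\varphi(p^k)$, because $(k-1)m\ge k-1$ and $(p-1)\mid(p^m-1)$ for $m\ge1$.

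The degenerate cases $m=0$ or $q=0$ need a separate check. For $m=0$, $a_n=q$ and $F=(1-x)^{-q}$. For $q=0$ with $m\ge1$, $a_n=0$ and $F=1$; here $0^{n^m}=0$. Both have integer coefficients.

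Since the congruence holds modulo every prime power dividing $d$, we get $d\mid d\,b_d$, i.e., $b_d\in\ZZ$. This completes the argument.
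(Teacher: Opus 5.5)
Your proof is correct and is essentially the paper's argument: the same reduction to showing $d\mid\sum_{e\mid d}\mu(d/e)q^{e^m}$, the same pairing of the divisors $p^kt$ and $p^{k-1}t$ for $d=p^ks$, and the same two cases ($p^{(k-1)m}\ge p^{k-1}\ge k$ when $p\mid q$; Euler's theorem with $\varphi(p^k)\mid Np^{(k-1)m}(p^m-1)$ when $p\nmid q$). The only difference is that you prove the integrality criterion yourself via the factorization $F_{q,m}(x)=\prod_{d\ge1}(1-x^d)^{-b_d}$, whereas the paper cites it from Stanley's Exercise~5.2(a).
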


\begin{proof}
If $q=0$, then $F_{q,m}(x)=1$ and the statement holds trivially. Thus, we assume that $q\ge 1$. We begin by proving the statement for $m=0$. We have
\[
\sum_{n=1}^\infty \frac{q^{n^0}}{n}x^n=q\sum_{n=1}^\infty \frac{x^n}{n}=-q\log(1-x)=\log(1-x)^{-q}.
\]
Hence,
\[
F_{q,0}(x)
=
\exp\left(\sum_{n=1}^\infty \frac{q^{n^0}}{n}x^n\right)=(1-x)^{-q}=\sum_{n=0}^\infty \binom{-q}{n}(-x)^n
=
\sum_{n=0}^\infty \binom{q+n-1}{n}x^n.
\] Since the binomial coefficients $\binom{q+n-1}{n}$ are integers, the formal power series $F_{q,0}(x)$ has integer coefficients.

Assume now that $m\geq 1$. By \cite[Exercise 5.2.a.]{St}, it suffices to show that $\sum_{d\mid n}\mu(d)q^{(\frac{n}{d})^m}\equiv 0 \pmod n$, for every $n\in\NN$. For $n=1$ the congruence holds trivially. Thus, assume that $n\geq 2$ and let $p$ be a prime divisor of $n$ and write $n=p^kN$ with $k\in\NN$ and $p\nmid N$. Every divisor $d$ of $n$ is therefore of the form $d = p^j d'$ with $0\leq j\leq k$ and $d'\mid N$. For $d'\mid N$ set $E_{d'}=(\tfrac{p^{k-1} N}{d'})^m$ and 
$\Delta_{d'}=
(\tfrac{N}{d'})^m p^{(k-1)m}(p^m-1)$. Since $\mu$ is multiplicative (e.g., \cite[Theorem 6.5]{B}) and since $\mu(p^j)=0$ for $j\geq 2$, we have
\begin{align*}
\sum_{d\mid n}\mu(d)q^{(\frac{n}{d})^m}
&=\sum_{d'\mid N}\mu(d')q^{(\frac{n}{d'})^m}
+\sum_{d'\mid N}\mu(p d')q^{(\frac{n}{p d'})^m}\\
&=\sum_{d'\mid N}\mu(d')\left(q^{( \frac{p^kN}{d'})^m}- q^{( \frac{p^{k-1}N}{d'})^m}
\right)\\
&=\sum_{d'\mid N}\mu(d')q^{E_{d'}}(q^{\Delta_{d'}}-1).
\end{align*} Thus, it suffices to show that $p^k \mid q^{E_{d'}}(q^{\Delta_{d'}}-1)$. We distinguish between two cases.
\begin{enumerate}
\item $p\mid q$. Then
$v_p(q^{E_{d'}}) = E_{d'}\cdot v_p(q) \geq p^{k-1}\cdot 1\geq k$. Thus, $p^k \mid q^{E_{d'}}$ and therefore $p^k \mid q^{E_{d'}}(q^{\Delta_{d'}}-1)$. 
\item $p\nmid q$. By Euler's theorem (e.g., \cite[Theorem 7.5]{B}),
$q^{\varphi(p^k)}\equiv 1 \pmod{p^k}$.
We have $(p-1)\mid (p^m-1)$ and, since $m\geq 1$, $p^{k-1}\mid p^{(k-1)m}$. It follows that
$\varphi(p^k)=p^{k-1}(p-1)\mid \Delta_{d'}$. Hence, $q^{\Delta_{d'}}\equiv 1 \pmod{p^k}$, i.e., $p^k\mid (q^{\Delta_{d'}}-1)$. Thus, $p^k\mid q^{E_{d'}}(q^{\Delta_{d'}}-1)$.\qedhere
\end{enumerate}
\end{proof}

\subsection{\texorpdfstring{\seqnum{A239333}}{}}

For $n\in\NN$ let $a_n$ denote the number of words $w_1\cdots w_n\in\{0,1,2,3\}^n$ of length $n$ such that for every $i\in[n]$,
\begin{equation}\label{mod4}
w_i\not\equiv 1 \pmod 4,
\qquad w_i\not\equiv 3+\sum_{j=1}^{i-1}w_j \pmod 4.
\end{equation} The statement of the following theorem was conjectured by Barker in \seqnum{A239333}. 

\begin{theorem}
Let $n\in\NN$. Then $a_{n+3} = 2a_{n+2}+2a_{n}$.
\end{theorem}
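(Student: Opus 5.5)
The plan is a transfer-matrix argument in which the state is the current partial sum modulo $4$. Put $s_i=\sum_{j=1}^{i}w_j \bmod 4$, with $s_0=0$. Condition \eqref{mod4} says that the next letter $w$ lies in $\{0,2,3\}$ and satisfies $w\not\equiv 3+s\pmod 4$, where $s$ is the current state; the new state is then $s+w$. The transitions are:
\begin{itemize}
\item from $0$, $w\in\{0,2\}$, giving states $0,2$;
\item from $1$, $w\in\{2,3\}$, giving states $3,0$;
\item from $2$, $w\in\{0,2,3\}$, giving states $2,0,1$;
\item from $3$, $w\in\{0,3\}$, giving states $3,2$.
\end{itemize}
The transition matrix (rows indexed by the current state $0,1,2,3$) is
\[
M=\begin{pmatrix}1&0&1&0\\1&0&0&1\\1&1&1&0\\0&0&1&1\end{pmatrix}.
\]
Each admissible word is exactly a walk of length $n$ starting at state $0$. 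Writing $e_0=(1,0,0,0)^T$ and $\mathbf 1=(1,1,1,1)^T$, this gives $a_n=e_0^TM^n\mathbf 1$.

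Next, compute the characteristic polynomial by cofactor expansion along the first row of $xI-M$. I expect
\[
\det(xI-M)=(x-1)\bigl(x^3-2x^2-2\bigr).
\]
Set $Q(x)=x^3-2x^2-2$. Then
\[
a_{n+3}-2a_{n+2}-2a_n=e_0^TM^nQ(M)\mathbf 1,
\]
so it suffices to show that $Q(M)\mathbf 1=0$. Cayley--Hamilton alone only gives $(M-I)Q(M)=0$, which is an order-four recurrence, so an extra step is needed to remove the factor $x-1$.

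This extra step is the main point. Cayley--Hamilton gives $(M-I)Q(M)\mathbf 1=0$, so $Q(M)\mathbf 1$ lies in $\ker(M-I)$. Since $Q(1)=-3\neq 0$, the eigenvalue $1$ is simple, and this kernel is spanned by one right eigenvector. Solving $Mv=v$ gives $v=(1,-1,0,-2)^T$, hence $Q(M)\mathbf 1=cv$ for some scalar $c$. Solving $uM=u$ gives the left eigenvector $u=(1,0,0,-1)$. Then
\[
u\,Q(M)\mathbf 1=Q(1)\,u\mathbf 1=0,
\]
because $u\mathbf 1=0$. On the other hand, $u\,Q(M)\mathbf 1=c\,(u v)=3c$, so $c=0$.

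Therefore $Q(M)\mathbf 1=0$, and the recurrence $a_{n+3}=2a_{n+2}+2a_n$ holds for every $n\in\NN$, in fact for every $n\ge 0$ with $a_0=1$. The only computations that need care are the determinant and the two eigenvector solves; both are routine to verify.
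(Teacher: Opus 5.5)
Your argument is correct. The transitions reproduce exactly the paper's recursions for $v_n(r)$, and $\det(xI-M)=(x-1)(x^3-2x^2-2)$. Both $v=(1,-1,0,-2)^T$ and $u=(1,0,0,-1)$ are eigenvectors for the eigenvalue $1$, and $uv=3$, so $c=0$ follows as you say. Your remark that the recurrence already holds from $n=0$ (with $a_0=1$) is also right.

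The state decomposition is the same as in the paper: admissible words are counted by the residue of the partial sum modulo $4$. The difference is in how the spurious eigenvalue $1$ is removed.

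\emph{The paper's route.} It passes to generating functions and solves the $4\times 4$ linear system for $V_0,\dots,V_3$ explicitly. The factor $1-x$ in the denominators of $V_0$ and $V_3$ then cancels in the sum, giving $A(x)=x(2+x+2x^2)/(1-2x-2x^3)$. This yields the recurrence and the initial values at once, but the cancellation looks like an algebraic accident.

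\emph{Your route.} The left-eigenvector step explains that cancellation. Since $u\mathbf 1=0$, the vector $\mathbf 1$ lies in the $M$-invariant complement $\ker u$ of the eigenline spanned by $v$. On $\ker u$ the matrix $M$ has characteristic polynomial $Q$, so Cayley--Hamilton there kills $\mathbf 1$.

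\emph{A shorter route.} You can reach the conclusion with even less work. Since $M\mathbf 1=(2,2,3,2)^T$, $M^2\mathbf 1=(5,4,7,5)^T$ and $M^3\mathbf 1=(12,10,16,12)^T$, one sees directly that $Q(M)\mathbf 1=M^3\mathbf 1-2M^2\mathbf 1-2\mathbf 1=0$. Combined with $a_{n+3}-2a_{n+2}-2a_n=e_0^TM^nQ(M)\mathbf 1$, this proves the theorem without computing the determinant or either eigenvector.
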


\begin{proof}
For $r\in\{0,1,2,3\}$ let $v_n(r)$ denote the number of words $w_1\cdots w_n\in\{0,1,2,3\}^n$ of length $n$ satisfying \eqref{mod4} such that $\sum_{j=1}^nw_j\equiv r\pmod 4$. Let $V_r(x)$ be the corresponding generating function, i.e., $V_r(x)=\sum_{n=1}^\infty v_n(r)x^n$. We obtain the recursions
\begin{align*}
v_{n+1}(0)&=v_n(0)+v_n(1)+v_n(2),\\
v_{n+1}(1)&=v_n(2),\\
v_{n+1}(2)&=v_n(0)+v_n(2)+v_n(3),\\
v_{n+1}(3)&=v_n(1)+v_n(3).
\end{align*} With the initial values 
$v_1(0)=v_1(2)=1,v_1(1)=v_1(3)=0$, the recursions may be written in terms of the generating functions as follows:
\begin{align*}
V_0(x) &= x(1+V_0(x)+V_1(x)+V_2(x)),\\
V_1(x) &= xV_2(x),\\
V_2(x) &= x(1+V_0(x)+V_2(x)+V_3(x)),\\
V_3(x) &= x(V_1(x)+V_3(x)).
\end{align*}
Solving the system gives
\begin{align*}
V_0(x)&=\frac{x(1-x+x^2-2x^3)}{(1-x)(1-2x-2x^3)},\\
V_1(x)&=\frac{x^2}{1-2x-2x^3},\\
V_2(x)&=\frac{x}{1-2x-2x^3},\\
V_3(x)&=\frac{x^3}{(1-x)(1-2x-2x^3)}.
\end{align*}
Let $A(x)=\sum_{n=1}^\infty a_n x^n$ be the generating function of $(a_n)_{n\in\NN}$. Since $a_n = \sum_{r=0}^3v_n(r)$ for every $n\in\NN$, we have 
\[A(x)=\sum_{r=0}^3 V_r(x)
=\frac{x(2+x+2x^2)}{1-2x-2x^3},
\] from which the asserted recursion immediately follows.
\end{proof}

\subsection{\texorpdfstring{\seqnum{A255992}}{}}

For $k\in\NN$ and $m\in\NN_0$, let $b_k(m)$ denote the number of binary words of length $m$ such that in every block of $k$ consecutive neighbor pairs there is at most one downstep, i.e., at most one occurrence of the pair $10$. We obtain the corresponding generating function and derive from it an explicit formula for the numbers $b_k(m)$. These formulas confirm the conjectures made by Barker in \seqnum{A255992}.

\begin{theorem}
Let $k,m\in\NN$ such that $m\geq k+1$. Then $b_k(m)=[x^m]F_k(x)$, where
\[
F_k(x)=
\frac{1-2x+2x^2-(k-1)x^k+(k-2)x^{k+1}}
{(1-x)^2(1-2x+x^2-(k-1)x^k+(k-2)x^{k+1})}.\]
\end{theorem}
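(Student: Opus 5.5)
The plan is to encode each binary word by its run structure, so that the condition on downsteps becomes a lower bound on the distance between consecutive downsteps. That turns the count into a transfer (sequence) construction whose generating function is a geometric series. I expect the resulting identity to hold for all $m$, which would cover $m\ge k+1$ in particular.

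\textbf{Step 1 (reformulation).} Index the neighbor pairs of $w_1\cdots w_m$ by $i\in[m-1]$, the pair at position $i$ being $(w_i,w_{i+1})$; a downstep is a position $i$ with $w_i w_{i+1}=10$. Every block of $k$ consecutive pairs contains at most one downstep if and only if any two distinct downstep positions differ by at least $k$. This is immediate: two downsteps at distance less than $k$ fit in a common window of length $k$, and conversely.

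\textbf{Step 2 (decomposition).} Every binary word factors uniquely as
\[
0^{a}\,1^{b_1}0^{c_1}\,1^{b_2}0^{c_2}\cdots 1^{b_j}0^{c_j}\,1^{e},
\]
with $a,e\ge 0$, $j\ge 0$, and all $b_i,c_i\ge 1$. The downsteps are exactly the last letters of the blocks $1^{b_i}$. Hence consecutive downsteps are at distance $c_i+b_{i+1}$ for $i\in[j-1]$. By Step 1, the admissible words are precisely those with $c_i+b_{i+1}\ge k$ for all $i\in[j-1]$.

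\textbf{Step 3 (generating function).} Regroup an admissible word with $j\ge1$ as follows:
\begin{itemize}
\item the prefix $0^a$, with generating function $\frac1{1-x}$;
\item the block $1^{b_1}$, with generating function $\frac{x}{1-x}$;
\item $j-1$ ``gaps'' $0^{c_i}1^{b_{i+1}}$, each with generating function
\[
G(x)=\sum_{s\ge k}(s-1)x^s,
\]
since there are $s-1$ ways to split a length $s\ge 2$ into $c_i,b_{i+1}\ge1$;
\item the block $0^{c_j}$, with generating function $\frac{x}{1-x}$;
\item the suffix $1^e$, with generating function $\frac1{1-x}$.
\end{itemize}
The words with $j=0$ are the words $0^a1^e$, contributing $\frac1{(1-x)^2}$. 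Summing the geometric series in $G$ therefore gives
\[
\sum_{m\ge0}b_k(m)x^m=\frac{1}{(1-x)^2}\left(1+\frac{x^2}{(1-x)^2(1-G(x))}\right).
\]

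\textbf{Step 4 (algebra).} Write
\[
G(x)=\frac{x^2}{(1-x)^2}-\sum_{s=2}^{k-1}(s-1)x^s.
\]
Multiplying the finite sum by $(1-x)^2$ telescopes to
\[
(1-x)^2\sum_{s=2}^{k-1}(s-1)x^s = x^2-(k-1)x^k+(k-2)x^{k+1}.
\]
Hence
\[
(1-x)^2(1-G(x))=1-2x+x^2-(k-1)x^k+(k-2)x^{k+1}=:D(x),
\]
and the generating function equals $\frac{D(x)+x^2}{(1-x)^2D(x)}$. This is exactly $F_k(x)$, since $D(x)+x^2$ is the stated numerator.

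The main obstacle is Step 2. One has to make sure the boundary runs, the leading zeros and the trailing ones, are handled so that the decomposition is genuinely bijective. One also has to make sure the distance between downsteps is correctly $c_i+b_{i+1}$ and not off by one. I would check this against small cases, for instance $k=2$ and $m\le 4$. The telescoping identity in Step 4 is routine, but I would also verify it separately for $k=1$ and $k=2$, where the finite sum is empty. If small-$m$ coefficients turned out to disagree with $b_k(m)$, for example because of conventions at $m=0$, the restriction $m\ge k+1$ in the statement would absorb the discrepancy.
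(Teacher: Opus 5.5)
Your proposal is correct and is essentially the paper's proof: the same run decomposition, the same observation that consecutive downsteps are $c_i+b_{i+1}$ apart, the same gap factor $\sum_{s\ge k}(s-1)x^s$, and the same geometric series. The only difference is cosmetic: you obtain the closed form of the gap factor by telescoping, whereas the paper states it directly.

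One correction concerns where the hypothesis $m\ge k+1$ enters. It is used exactly in your Step 1, because a window of $k$ consecutive pairs containing two downsteps at distance less than $k$ exists only if $m-1\ge k$; it has nothing to do with conventions at $m=0$. For $m\le k$ the condition is vacuous and the identity can fail. For example, with $k=4$ all $16$ words of length $4$ qualify, while $[x^4]F_4(x)=15$. So your expectation that the identity holds for all $m$ is wrong, and Step 1 should be stated under the hypothesis $m\ge k+1$.
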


\begin{proof}
Let $r\in\NN_0$ and let $b_{k,r}(m)$ denote the number of binary words of length $m$ satisfying the condition above and having exactly $r$ downsteps. Let $w$ be such a word and assume that $r=0$. We claim that $b_{k,0}(m)=[x^m]B_{k,0}(x)$, where $B_{k,0}(x)=\frac{1}{(1-x)^2}$. Indeed, $w$ factors as $w=0^{\alpha_0}1^{\alpha_1}$, for some $\alpha_0,\alpha_1\in\NN_0$ and therefore
\begin{equation}\label{ed5992}
B_{k,0}(x)=\sum_{\alpha_0=0}^\infty\sum_{\alpha_1=0}^\infty x^{\alpha_0+\alpha_1}=\frac{1}{(1-x)^2}.
\end{equation}
Assume now that $r\geq 1$. Then $w$ factors as
\begin{equation}\label{e15992}
w=
0^{\alpha_0}1^{\alpha_1}0^{\alpha_2}1^{\alpha_3} \cdots
1^{\alpha_{2r-1}}0^{\alpha_{2r}}1^{\alpha_{2r+1}},
\end{equation}
where $\alpha_0,\alpha_{2r+1}\in\NN_0$ and 
$\alpha_i\in\NN$, for every $i\in[2r]$. For $j\in[r]$ let $p_j$ be the index of the left digit of the $j$th occurrence of $10$. Then, 
\[
p_{j+1}-p_j=\alpha_{2j}+\alpha_{2j+1},\qquad j\in[r-1].
\]
Since $m\ge k+1$, the condition on $w$ is therefore equivalent to
\begin{equation}\label{e25992}
\alpha_{2j}+\alpha_{2j+1}\geq k, \qquad j\in[r-1].
\end{equation}
We claim that $b_{k,r}(m)=[x^m]B_{k,r}(x)$, where \[B_{k,r}(x)=
\frac{x^2}{(1-x)^4}\left(
\frac{x^k((k-1)-(k-2)x)}{(1-x)^2}
\right)^{r-1}.
\] Indeed, in \eqref{e15992}, both blocks $0^{\alpha_0}$ and $1^{\alpha_{2r+1}}$ contribute a factor $\frac{1}{1-x}$ each. Similarly, the blocks $1^{\alpha_1}$ and $0^{\alpha_{2r}}$ contribute a factor $\frac{x}{1-x}$ each. By \eqref{e25992}, for every $j\in[r-1]$, the block
$0^{\alpha_{2j}}1^{\alpha_{2j+1}}$ contributes a factor
\[
P_k(x)=
\sum_{\substack{u,v\in\NN\\u+v\geq k}} x^{u+v}=\sum_{s\geq \max\{2,k\}} (s-1)x^s=\frac{x^k((k-1)-(k-2)x)}{(1-x)^2}.
\]
It follows that 
\begin{equation}\label{edd5992}
B_{k,r}(x)=\frac{x^2}{(1-x)^4}P_k(x)^{r-1} =\frac{x^2}{(1-x)^4}\left(\frac{x^k((k-1)-(k-2)x)}{(1-x)^2}\right)^{r-1}.
\end{equation}
Hence,
\begin{align*}
F_k(x)&=
\sum_{r=0}^\infty 
B_{k,r}(x)\\
&=\frac{1}{(1-x)^2}+\frac{x^2}{(1-x)^4}\sum_{r=1}^\infty\left(\frac{x^k((k-1)-(k-2)x)}{(1-x)^2}\right)^{r-1}\\
&=\frac{1}{(1-x)^2}+\frac{x^2}{(1-x)^4}\cdot\frac{1}{1-\dfrac{x^k((k-1)-(k-2)x)}{(1-x)^2}}\\
&=\frac{1-2x+2x^2-(k-1)x^k+(k-2)x^{k+1}}{(1-x)^2(1-2x+x^2-(k-1)x^k+(k-2)x^{k+1})}.
\end{align*}
Finally,
\[
b_k(m)=\sum_{r=0}^\infty b_{k,r}(m)=\sum_{r=0}^\infty [x^m]B_{k,r}(x)=[x^m]F_k(x). \qedhere
\]
\end{proof}

\begin{lemma}
Let $k,n\in\NN$. Then
\[b_k(n+k)
=
n+k+1+
\sum_{r=1}^{\left\lfloor\frac{n-2}{k}\right\rfloor+2}\sum_{j=0}^{r-1}
(-1)^j\binom{r-1}{j}(k-1)^{r-1-j}(k-2)^j
\binom{n+(2-r)k+2r-1-j}{2r+1}.
\] 
\end{lemma}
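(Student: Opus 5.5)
The plan is to take the preceding theorem as the starting point and extract coefficients term by term. Since $m=n+k\geq k+1$, that theorem applies, and its proof gives
\[
b_k(n+k)=\sum_{r=0}^\infty [x^{n+k}]B_{k,r}(x),
\]
with $B_{k,0}(x)=\frac{1}{(1-x)^2}$ and, for $r\geq1$, $B_{k,r}(x)$ as in \eqref{edd5992}. I would work directly with the decomposition $\sum_r B_{k,r}(x)$ rather than the closed form $F_k(x)$, because it already separates by the number of downsteps $r$. That is exactly the outer summation variable in the claimed formula.

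\textbf{Step 1.} The $r=0$ term is $[x^{n+k}](1-x)^{-2}=n+k+1$, which gives the leading summand.

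\textbf{Step 2.} For $r\geq1$, rewrite \eqref{edd5992} as
\[
B_{k,r}(x)=\frac{x^{2+k(r-1)}\bigl((k-1)-(k-2)x\bigr)^{r-1}}{(1-x)^{2r+2}}.
\]
Expand the numerator binomially:
\[
\bigl((k-1)-(k-2)x\bigr)^{r-1}=\sum_{j=0}^{r-1}(-1)^j\binom{r-1}{j}(k-1)^{r-1-j}(k-2)^jx^j.
\]
Then use $[x^e](1-x)^{-(2r+2)}=\binom{e+2r+1}{2r+1}$ for $e\geq0$, with
\[
e=n+k-2-k(r-1)-j=n+(2-r)k-2-j.
\]
This gives
\[
e+2r+1=n+(2-r)k+2r-1-j,
\]
which is precisely the binomial appearing in the statement.

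\textbf{Step 3.} Justify truncating the sum over $r$ at $\lfloor\frac{n-2}{k}\rfloor+2$. If $r>\lfloor\frac{n-2}{k}\rfloor+2$, then $(r-2)k>n-2$. Hence the lowest power $x^{2+k(r-1)}$ of $B_{k,r}(x)$ already exceeds $x^{n+k}$, so that $B_{k,r}(x)$ contributes nothing.

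\textbf{Main obstacle.} I expect the only delicate point to be the terms inside the range where $e<0$, for some $j$ (this happens for the largest admissible $r$). Those coefficients are genuinely $0$, so I must check that the displayed binomial also vanishes there. Within the range we have $n+(2-r)k-2\geq n-2-k\lfloor\frac{n-2}{k}\rfloor\geq0$, and $j\leq r-1$. Hence $e\geq-(r-1)$, so the top entry satisfies $e+2r+1\geq r+2>0$ while remaining below $2r+1$. The binomial coefficient is therefore $0$ in the ordinary sense, so no sign or extension issues arise.

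I would also check the edge case $n=1$ separately. There $\lfloor -1/k\rfloor=-1$, so only $r=1$ appears, and the argument above still applies.
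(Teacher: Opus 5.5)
Your proposal is correct and follows the paper's proof: extract $[x^{n+k}]$ from each $B_{k,r}(x)$ in \eqref{ed5992} and \eqref{edd5992} using the binomial expansion of $((k-1)-(k-2)x)^{r-1}$ and $[x^e](1-x)^{-s}=\binom{e+s-1}{s-1}$, then truncate the sum over $r$. Your truncation via the factor $x^{2+k(r-1)}$ is the generating-function form of the paper's remark that a word of length $n+k$ has at most $\lfloor\frac{n-2}{k}\rfloor+2$ downsteps, and your check that the binomial vanishes (with positive top entry) whenever $e<0$ makes explicit a point the paper passes over.
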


\begin{proof}
By \eqref{ed5992} and \eqref{edd5992}, and using 
$[x^m]\frac{1}{(1-x)^s}=\binom{m+s-1}{s-1}$, we have
\begin{align*}
b_k(n+k)&=[x^{n+k}]\frac{1}{(1-x)^2}
+
\sum_{r=1}^\infty
[x^{n+k}]\frac{x^2}{(1-x)^4}\left(
\frac{x^k((k-1)-(k-2)x)}{(1-x)^2}
\right)^{r-1}\\
&=n+k+1
+
\sum_{r=1}^\infty
[x^{n-2-(r-2)k}]
\frac{((k-1)-(k-2)x)^{r-1}}{(1-x)^{2r+2}}\\
&=n+k+1
+
\sum_{r= 1}^\infty \sum_{j=0}^{r-1}
(-1)^j\binom{r-1}{j}(k-1)^{r-1-j}(k-2)^j
[x^{n-2-(r-2)k-j}]
\frac{1}{(1-x)^{2r+2}}\\
&=n+k+1+
\sum_{r=1}^\infty\sum_{j=0}^{r-1}
(-1)^j\binom{r-1}{j}(k-1)^{r-1-j}(k-2)^j
\binom{n+(2-r)k+2r-1-j}{2r+1}.
\end{align*}
Since no word of length $n+k$ can have more than $\left\lfloor\frac{n-2}{k}\right\rfloor+2$ downsteps, the assertion follows.
\end{proof}

\subsection{\texorpdfstring{\seqnum{A258902}}{}}

Let $(a_n)_{n\in\NN}$ be the sequence defined by the exponential generating function
\[
A(x) = \sum_{n=1}^\infty a_n \frac{x^n}{n!},
\]
where $A(x)$ is the series reversion of $f(x) = x - \frac{x^2}{2} -
\frac{x^3}{3}$, in the sense that
$f(A(x)) = x$. The statement concerning the recurrence in Theorem \ref{kos} below was conjectured by Kotesovec in \seqnum{A258902}. We shall need the following result.

\begin{lemma}\label{l1902}
Let $n\in\NN$ and let $\alpha,\beta\in\RR$. Then, for every $k\in\NN_0$,
\begin{equation}\label{e8}
[x^k](1 - \alpha x - \beta x^2)^{-n}=\sum_{m=0}^{\left\lfloor \frac{k}{2}\right\rfloor}
\binom{n+k-m-1}{k-m}\binom{k-m}{m}
\alpha^{k-2m}\beta^{m}.
\end{equation}
\end{lemma}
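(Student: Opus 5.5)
We expand $(1-\alpha x-\beta x^2)^{-n}$ twice with the negative binomial series: first in the combined quantity $u=\alpha x+\beta x^2$, then each power of $u$ by the ordinary binomial theorem.

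The first step uses the formal identity
\[
(1-u)^{-n}=\sum_{s=0}^\infty \binom{n+s-1}{s}u^s ,
\]
which the paper already used in the form $[x^m](1-x)^{-s}=\binom{m+s-1}{s-1}$ and in the computation of $F_{q,0}$. Since $u$ has zero constant term, substituting $u=\alpha x+\beta x^2$ is legitimate in formal power series. Only finitely many $s$ contribute to a fixed coefficient, because $u^s$ has order at least $s$.

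The second step expands each power of $u$ binomially:
\[
u^s=x^s(\alpha+\beta x)^s=\sum_{m=0}^{s}\binom{s}{m}\alpha^{s-m}\beta^m x^{s+m}.
\]

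The third step extracts $[x^k]$ by collecting the terms with $s+m=k$. Setting $s=k-m$, the constraint $0\le m\le s$ becomes $0\le m\le k-m$, that is, $0\le m\le\lfloor k/2\rfloor$. The contribution of each such $m$ is
\[
\binom{n+k-m-1}{k-m}\binom{k-m}{m}\alpha^{k-2m}\beta^m ,
\]
which is exactly \eqref{e8}.

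The only step needing any care is the reindexing $s=k-m$ together with the resulting summation range. Everything else is a direct application of the two binomial expansions. The real parameters $\alpha$ and $\beta$ cause no issue, since the identity is polynomial in them.
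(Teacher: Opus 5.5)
Your proof is correct and follows essentially the same route as the paper: expand $(1-u)^{-n}$ by the negative binomial series with $u=\alpha x+\beta x^2$, expand each power of $u$ binomially, and collect the terms with total exponent $k$. The only difference is cosmetic: the paper first indexes by the outer exponent $j$ (with $\lceil k/2\rceil\le j\le k$) and then substitutes $m=k-j$, whereas you reindex by $m$ directly.
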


\begin{proof}
By the binomial theorem for negative integer exponents (e.g., \cite[(5.56)]{CONC}), 
\begin{align*}
(1-\alpha x-\beta x^2)^{-n}
&=
\sum_{j=0}^\infty \binom{n+j-1}{j}(\alpha x+\beta x^2)^j\\
&=\sum_{j=0}^\infty \binom{n+j-1}{j}\sum_{i=0}^j \binom{j}{i} \alpha^{j-i}\beta^i x^{j+i}.
\end{align*} Now, a monomial $x^{j+i}$ contributes
to $[x^k]$ if and only if  $j+i=k$, i.e., exactly when $i=k-j$. Since $0\leq i\leq j$, necessarily $0 \leq k-j \leq j$, or, equivalently, $\frac{k}{2} \leq j \leq k$. It follows that
\[
[x^k](1-\alpha x-\beta x^2)^{-n}=
\sum_{j=\left\lceil \frac{k}{2}\right\rceil}^{k}
\binom{n+j-1}{j}\binom{j}{k-j}\alpha^{2j-k}\beta^{k-j}.
\]
Substituting $m=k-j$ immediately yields \eqref{e8}.
\end{proof}

\begin{theorem}\label{kos}
Let $n\in\NN$. Then
\begin{equation}\label{eekos}
a_n=
\frac{(n-1)!}{2^{n-1}}
\sum_{m=0}^{\left\lfloor \frac{n-1}{2}\right\rfloor}
\binom{2n-2-m}{n-1-m}\binom{n-1-m}{m}\left(\frac{4}{3}\right)^m.
\end{equation}
Furthermore, 
\begin{equation}\label{e10902}
19a_{n+2}
-(42n+21)a_{n+1}
+(4-36n^2)a_n=0.
\end{equation}
\end{theorem}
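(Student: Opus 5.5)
The plan is to obtain the closed form \eqref{eekos} by Lagrange inversion together with Lemma~\ref{l1902}, and then to derive the recurrence \eqref{e10902} from a linear differential equation satisfied by $A(x)$.

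\emph{Closed form.} Write $f(x)=x\left(1-\frac{x}{2}-\frac{x^2}{3}\right)$. The equation $f(A(x))=x$ is then equivalent to $A(x)=x\,\phi(A(x))$, where
\[
\phi(t)=\left(1-\tfrac12 t-\tfrac13 t^2\right)^{-1}.
\]
Since $\phi(0)=1\neq 0$, Lagrange inversion gives
\[
[x^n]A(x)=\frac1n[t^{n-1}]\phi(t)^n .
\]
I then apply Lemma~\ref{l1902} with $\alpha=\frac12$, $\beta=\frac13$ and $k=n-1$. The binomial factors become $\binom{2n-2-m}{n-1-m}\binom{n-1-m}{m}$, and the weight becomes
\[
2^{-(n-1-2m)}3^{-m}=2^{-(n-1)}\left(\tfrac43\right)^m .
\]
Finally $a_n=n!\,[x^n]A(x)$, and $n!\cdot\frac1n=(n-1)!$, which gives \eqref{eekos} exactly.

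\emph{Recurrence.} The route is to produce a linear ODE with polynomial coefficients for $A$ and then extract coefficients. Differentiating $f(A)=x$ gives
\[
A'=\frac{1}{1-A-A^2}.
\]
Using the cubic relation $A^3=3x-3A+\frac32A^2$, every rational expression in $A$ can be reduced to the form $p_0(x)+p_1(x)A+p_2(x)A^2$ with rational $p_i$. In particular this applies to $A'$ (after inverting $1-A-A^2$ modulo the cubic) and to $A''$. The functions $1,A,A',A''$ then lie in a $3$-dimensional space over $\mathbb{Q}(x)$, so there is a relation
\[
P_2(x)A''+P_1(x)A'+P_0(x)A=Q(x)
\]
with polynomial coefficients. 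I will solve the resulting small linear system explicitly. The discriminant of the cubic in $A$ should appear as $P_2$, up to a factor, and it has a factor $19$-type coefficient (from $1-\dots$), which matches the leading $19$ in \eqref{e10902}. Extracting $[x^n]$ then gives a recurrence for $c_n=a_n/n!$. Substituting $c_n=a_n/n!$ and clearing factorials should produce \eqref{e10902}, possibly after combining with a shifted copy to reduce the order. I will check the final recurrence against the first few values computed from \eqref{eekos}; the $Q(x)$ term only affects small indices.

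The main obstacle is the second step: getting the ODE to have low enough order and degree that the extracted recurrence is exactly the three-term one in \eqref{e10902}, rather than a longer one. If the direct elimination yields a longer recurrence, my fallback is creative telescoping (Zeilberger) on the hypergeometric summand in \eqref{eekos}. There I would exhibit an explicit certificate $R(n,m)$ and verify the telescoping identity by routine rational-function algebra. The boundary terms vanish because the binomials vanish outside $0\le m\le\lfloor\frac{n-1}{2}\rfloor$.
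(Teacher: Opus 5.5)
Your proposal is correct and follows the paper's proof: the closed form is obtained exactly as there, and for the recurrence the paper likewise uses a second-order linear ODE, namely $(19-42x-36x^2)A''-(21+36x)A'+4A+2=0$, which it writes down and verifies from $A'=(1-A-A^2)^{-1}$, $A''=(1+2A)(1-A-A^2)^{-3}$ and the identities $(1-y-y^2)^2(19-4y-4y^2)=19-42x-36x^2$ and $3(1+2y)(7-2y-2y^2)=21+36x$ (with $y=A$); its coefficients of $\frac{x^n}{n!}$ give \eqref{e10902} directly, so neither a shifted copy nor the Zeilberger fallback is needed, and because $1,A,A'$ form a basis of $\mathbb{Q}(x)(A)$, your elimination can only return this ODE. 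Fix one sign slip before computing: $x=A-\frac{A^2}{2}-\frac{A^3}{3}$ gives $A^3=3A-\frac{3}{2}A^2-3x$, not $3x-3A+\frac{3}{2}A^2$.
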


\begin{proof}
For $m\in\NN$ let $b_m =\frac{a_m}{m!}$. Then 
$A(x)=\sum_{m=1}^{\infty} b_m x^m$. Let $\phi(x) = (1 - \frac{x}{2} -\frac{x^2}{3})^{-1}$. We have 
\[x=f(A(x))=\frac{A(x)}{\phi(A(x))}.
\] By the Lagrange inversion theorem (e.g., \cite[p.\ 66]{FS}), for every $n\in\NN$,
\begin{equation}
a_n=n!b_n=(n-1)![x^{n-1}]\phi(x)^n.
\end{equation} Using Lemma \ref{l1902} with $\alpha = \frac{1}{2},\beta =
\frac{1}{3}$, and $k=n-1$ we obtain \eqref{eekos}.

To see that \eqref{e10902} holds, set $y=A(x)$. Then
\begin{equation}\label{inver1}
x=y-\frac{y^2}{2}-\frac{y^3}{3}.
\end{equation}
Differentiating we obtain
$1=(1-y-y^2)y'$. Hence,
\begin{equation}\label{eq:firstder258902}
y'=\frac{1}{1-y-y^2}.
\end{equation}
Differentiating again we obtain
\begin{equation}\label{eq:secondder258902}
y''=\frac{1+2y}{(1-y-y^2)^3}.
\end{equation}
Using \eqref{inver1}, a direct calculation gives
\begin{align*}
(1-y-y^2)^2(19-4y-4y^2)
&=19-42x-36x^2,\\
3(1+2y)(7-2y-2y^2)&=21+36x.
\end{align*}
It follows from \eqref{eq:firstder258902} and
\eqref{eq:secondder258902} that
\begin{align*}
&(19-42x-36x^2)y''-(21+36x)y'+4y+2\\
&\quad=
\frac{(1+2y)(19-4y-4y^2)}{1-y-y^2}-\frac{3(1+2y)(7-2y-2y^2)}{1-y-y^2}+4y+2\\
&\quad=
\frac{1+2y}{1-y-y^2}(19-4y-4y^2-3(7-2y-2y^2))+4y+2\\
&\quad=
\frac{2(1+2y)(y+y^2-1)}{1-y-y^2}+4y+2\\
&\quad=-2(1+2y)+4y+2\\
&\quad=0.
\end{align*}
Thus, $A(x)$ satisfies the differential equation
\begin{equation}\label{diffeq1}
(19-42x-36x^2)A''(x)
-(21+36x)A'(x)
+4A(x)+2=0.
\end{equation}
Using
$A(x)=\sum_{n=1}^{\infty}a_n\frac{x^n}{n!}$, by
equating the coefficients of $\frac{x^n}{n!}$ in
\eqref{diffeq1}, for $n\in\NN$, the assertion follows.
\end{proof}

\subsection{\texorpdfstring{\seqnum{A293129}}{}}
Let 
\[
L(x)= \sum_{m=-\infty}^{\infty}\frac{(x-x^{2m-1})^{2m-1}}{2m-1}.
\]
Let $(a_n)_{n\in\NN}$ be the sequence corresponding to the logarithmic generating function, i.e.,
\begin{equation}\label{ew1129}
L(x)=\sum_{n=1}^\infty a_n\frac{x^{2n-1}}{2n-1}.
\end{equation}
The statement of the following theorem was conjectured by Hanna in \seqnum{A293129}.

\begin{theorem}
Let $n\in\NN$. Then 
$a_{2^n+1}=1$.
\end{theorem}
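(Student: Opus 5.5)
The plan is to expand $L(x)$ term by term and extract the coefficient of $x^N$, where $N=2n-1$. By \eqref{ew1129}, $a_n=N\,[x^N]L(x)$, so the goal becomes $[x^N]L(x)=\frac1N$ for $N=2^{t+1}+1$, where $n=2^t+1$ with $t\in\NN$. I split the sum over odd $k=2m-1$ into positive and negative $k$.

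For odd $k\ge 3$ I write $(x-x^k)^k=x^k(1-x^{k-1})^k$ and expand binomially. This gives the exponents $k+j(k-1)$ with coefficients $\frac1k(-1)^j\binom kj$; the term $k=1$ vanishes identically. For $k=-s$ with $s\ge1$ odd, I write $x-x^{-s}=-x^{-s}(1-x^{s+1})$. Since $s$ is odd, this gives
\[
\frac{(x-x^{-s})^{-s}}{-s}=\frac1s\,x^{s^2}(1-x^{s+1})^{-s}=\frac1s\sum_{j\ge0}\binom{s+j-1}{j}x^{s^2+j(s+1)}.
\]
The key observation is that both exponent conditions factor through $N-1$:
\[
k+j(k-1)=N\iff (k-1)(j+1)=N-1,\qquad s^2+j(s+1)=N\iff (s+1)(s-1+j)=N-1.
\]
So every contribution to $[x^N]L(x)$ is indexed by a factorization $N-1=uv$ with $u,v\in\NN$. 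In the positive case, $u=k-1$ must be even and $j=v-1$. The contribution is then $\frac{(-1)^{v-1}}{u+1}\binom{u+1}{v-1}$, which equals $\frac{(-1)^{v-1}}{v-1}\binom{u}{v-2}$ when $v\ge2$. In the negative case, $v=s+1$ must be even and $j=u-v+2$. The contribution is then $\frac{1}{v-1}\binom{u}{v-2}$, read as $0$ when $j<0$. I would check that the binomial conventions handle all range restrictions ($0\le j\le k$, respectively $j\ge0$) automatically.

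Comparing the two formulas, for every factorization with $u$ even and $v$ even, the positive and the negative terms cancel exactly. Now $N-1=2^{t+1}$, so every factorization has $u,v$ powers of $2$. The only possible leftovers are:
\begin{enumerate}
\item the positive term with $v$ odd, namely $v=1$, $u=2^{t+1}$, $k=N$, $j=0$, which contributes $\frac1N$;
\item the negative term with $u$ odd, namely $u=1$, $v=2^{t+1}$, which needs $j=3-2^{t+1}\ge0$ and is therefore absent because $t\ge1$.
\end{enumerate}
Hence $[x^N]L(x)=\frac1N$, and so $a_{2^t+1}=1$.

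The main obstacle is the bookkeeping in the pairing step. I must check that the rewriting $\frac{1}{u+1}\binom{u+1}{v-1}=\frac1{v-1}\binom u{v-2}$ is valid for all $v\ge2$. I must also check that the two index ranges are matched correctly, so that a positive term never survives because its negative partner is ``out of range'' or vice versa. Both reduce to the vanishing of $\binom{u}{v-2}$ exactly when $v-2>u$.

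I would also justify that $L(x)$ is a well-defined formal power series. Only finitely many $k$ contribute to each $x^N$, since the lowest exponent from $k$ is $k$ for $k>0$ and $s^2$ for $k=-s<0$, so each coefficient extraction is a finite sum.
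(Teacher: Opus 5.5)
Your argument is correct, and it takes a genuinely different route from the paper.

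The paper never works with the odd-exponent sum directly. It imports from \seqnum{A293129} the identity $L(x)=-\log(1-x)-\sum_{m\neq 0}\frac{(x-x^{2m})^{2m}}{2m}$ and reads off $\frac1N$ from $-\log(1-x)$. It then shows by a divisibility argument that no even-exponent term reaches $x^N$. A hit would require an odd divisor $2m-1>1$ or $2|m|+1\ge 3$ of the power of two $N-1$, and the remaining case $m=1$ is excluded by the range of the binomial index.

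You instead stay with the definition and index every contribution to $[x^N]L(x)$ by a factorization $N-1=uv$. You cancel positive- against negative-exponent terms whenever $u$ and $v$ are both even, so $\frac1N$ comes from the leading monomial of the $k=N$ term rather than from a logarithm. Your range bookkeeping is sound: both kinds of missing term occur exactly when $v-2>u$, where $\binom{u}{v-2}=0$. At $N=9$, for instance, the contributions are $-1+1-\frac13+\frac13+\frac19$.

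The paper's proof is shorter and needs no binomial manipulations, but it rests on an identity it does not prove. Yours is self-contained and yields more. For every odd $N\ge 3$ it expresses $[x^N]L(x)$ as $\frac1N$ plus a sum of nonnegative terms, indexed by the factorizations $N-1=uv$ with $u,v$ of opposite parity. From this one also obtains the converse: for $n\ge 2$, $a_n=1$ only if $n=2^t+1$ with $t\in\NN$.
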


\begin{proof}
By a formula in \seqnum{A293129}, 
\begin{equation}\label{ew11299}
L(x)= -\log(1-x)-\sum_{\substack{m\in\ZZ\\ m\neq 0}}\frac{(x-x^{2m})^{2m}}{2m}.
\end{equation}
Comparing \eqref{ew1129} with \eqref{ew11299}, we have $a_{2^{n}+1}=(2^{n+1}+1)[x^{2^{n+1}+1}]L(x)$. 
Since 
$-\log(1-x)=\sum_{m=1}^\infty\frac{x^m}{m}$,
we have
$[x^{2^{n+1}+1}](-\log(1-x))=\frac{1}{2^{n+1}+1}$. We claim that for every $0\neq m\in\ZZ$, we have
$[x^{2^{n+1}+1}](x-x^{2m})^{2m}=0$. To see this, we distinguish between two cases.
\begin{enumerate}
\item $m\geq 1$. We have
\[
(x-x^{2m})^{2m}=x^{2m}(1-x^{2m-1})^{2m}
=\sum_{k=0}^{2m}(-1)^k\binom{2m}{k}x^{2m+k(2m-1)}.
\]
Assume that for some $0\leq k\leq 2m$, we have $2^{n+1}+1=2m+k(2m-1)$. Then $2^{n+1}+1\equiv 1\pmod{2m-1}$. Thus, $(2m-1)\mid 2^{n+1}$. But $2m-1$ is odd while $2^{n+1}$ is a power of $2$. Thus, $m=1$ and therefore $2^{n+1}+1=2+k$. Hence, $k=2^{n+1}-1\geq 3$, contradicting $k\leq 2m=2$. 
\item $m\leq -1$. Set $u=-m$. Then
\[
(x-x^{2m})^{2m}=
x^{4u^2}(1-x^{2u+1})^{-2u}
=\sum_{k=0}^{\infty}\binom{2u+k-1}{k}x^{4u^2+k(2u+1)}.
\]
Assume that for some $k\in\NN_0$ we have $2^{n+1}+1=4u^2+k(2u+1)$. Since $4u^2+k(2u+1)=(2u-1)(2u+1)+1+k(2u+1)$, we have $2^{n+1}+1\equiv 1\pmod{2u+1}$. Thus, $(2u+1)\mid 2^{n+1}$. But $2u+1\geq 3$ is odd, while $2^{n+1}$ is a power of $2$, a contradiction. 
\end{enumerate} It follows that 
\begin{align*}
[x^{2^{n+1}+1}]L(x)&=[x^{2^{n+1}+1}](-\log(1-x))-[x^{2^{n+1}+1}]\sum_{\substack{m=-\infty\\ m\neq 0}}^{\infty}\frac{(x-x^{2m})^{2m}}{2m}\\
&=\frac{1}{2^{n+1}+1} + 0 = \frac{1}{2^{n+1}+1},
\end{align*} and the proof is complete.
\end{proof}

\subsection{\texorpdfstring{\seqnum{A325046}}{}}

Let 
\[
A(x)=\sum_{n=0}^\infty \frac{x^n(1+x^n)^n}{(1-x^{n+1})^{n+1}}.
\] Let $(a_n)_{n\in\NN_0}$ be the sequence whose generating function
$\sum_{n=0}^\infty a_nx^n$ 
corresponds to $A(x)$. 

We write $r\preceq s$ if every binary digit equal to $1$ in $r$ is also equal to $1$ in $s$. Equivalently, $r\land s=r$. We also write $r\perp s$ if $r$ and $s$ have no common positions of ones. Equivalently $r\land s=0$.

The statement of the following theorem was conjectured by Hanna in \seqnum{A325046}.

\begin{theorem}
Let $N\in\NN_0$. Then $a_N$ is odd if and only if $N=m(m+1)$, for some $m\in\NN_0$.
\end{theorem}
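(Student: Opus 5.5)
Throughout we work modulo $2$, i.e., in $\mathbb{F}_2[[x]]$, where $(1+x^n)^n$ and $(1-x^{n+1})^{-(n+1)}$ can be expanded via Lucas' theorem. The notation $\preceq$ and $\perp$ introduced just before the statement points in this direction: by Lucas' theorem, $\binom{n}{i}$ is odd iff $i\preceq n$. Likewise, $\binom{n+j}{j}$ (the coefficient of $x^{(n+1)j}$ in $(1-x^{n+1})^{-(n+1)}$) is odd iff $j\perp n$, since there are no carries in $n+j$ exactly when $n\land j=0$. Hence
\[
a_N\equiv \#\{(n,i,j)\in\NN_0^3 \;:\; n+ni+(n+1)j=N,\ i\preceq n,\ j\perp n\}\pmod 2 .
\]
The plan is to find an involution on this set of triples whose fixed points are exactly the triples with $N=m(m+1)$. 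Alternatively, we could simplify the series modulo $2$ to a known lacunary form.

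The algebraic route seems cleaner, so we try it first. Modulo $2$ we have $(1+x^n)^n(1+x^{n+1})^{-(n+1)}$. Write $n=2^e u$ with $u$ odd; then $(1+x^n)^n=(1+x^{2^e n})^{u}$, and similarly for $n+1$. We would then regroup $A(x)$ as a sum over $n$ of $x^n\frac{(1+x^n)^n}{(1+x^{n+1})^{n+1}}$. Next we compare term $n$ with a shifted piece of term $n+1$, looking for a telescoping identity of the form
\[
T_n(x)=x^{n(n+1)}\cdot(\text{something})+\bigl(G_n(x)-G_{n+1}(x)\bigr)\pmod 2 .
\]
The target is $A(x)\equiv\sum_{m\ge0}x^{m(m+1)}\pmod 2$. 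Known results include Gauss' identity $\sum_{m} x^{m(m+1)/2}=\prod(1-x^{2k})/(1-x^{2k-1})$; modulo $2$, the target $\sum_m x^{m(m+1)}$ is the square of $\sum_m x^{T_m}$. This suggests a second formulation: show that $A(x)\equiv B(x)^2$ with $B(x)=\sum_{n}\frac{x^{\cdot}(\cdots)}{\cdots}$. Since squaring is Frobenius in characteristic $2$, such an identity would reduce to the $x^2\mapsto x$ version of $A$.

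If the algebra does not close, we fall back on the combinatorial involution on triples $(n,i,j)$. Triples with $i\neq j$ (in a suitable sense) should pair up with $(n',i',j')$ that have the same $N$. A natural candidate swaps the roles via $N=n(1+i+j)+j$ and $N+? = (n+1)(1+i+j)-1-i$, i.e., $N=(n+1)(1+i+j)-(1+i)$. Fixed points should be forced to $i=j=n$, or to $i=n$, $j=0$ type configurations, giving $N=n+n\cdot n+0$ or $N=n(n+1)$. Indeed $i=n$, $j=0$ gives $N=n+n^2=n(n+1)$, and such a triple is always counted, since $n\preceq n$ and $0\perp n$. So the intended fixed point is $(m,m,0)$, contributing $1$ for each $N=m(m+1)$, and we must show all other triples cancel in pairs.

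The main obstacle is constructing this pairing (equivalently, the telescoping identity) so that it respects the bit conditions $i\preceq n$ and $j\perp n$. Our first attempt will use the complement map. Given $i\preceq n$ with $i\neq n$, set $i'=n-i$; this satisfies $i'\preceq n$, although it changes $N$, so we would compensate by adjusting $j$ using the $\perp$ condition, e.g. $j\mapsto j+(n-i)\cdot(\text{factor})$. Before committing, we would test small $N$ numerically to guide the exact involution.
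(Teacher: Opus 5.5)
Your reduction is sound and matches the paper's first step. By Lucas' theorem, $a_N$ has the parity of the number of triples $(n,i,j)$ with $N=n(i+1)+(n+1)j$, $i\preceq n$, $j\perp n$. You also correctly single out $(m,m,0)$ as the triples that should survive. However, the proposal stops where the real work begins: no involution and no telescoping identity is actually produced, and the candidate you sketch cannot work. Your own rewriting $N=(n+1)(1+i+j)-(1+i)$, with $1\le 1+i\le n+1$, shows that $(N,n)$ determines $i$, and then $j$. So for fixed $n$ there is at most one triple, and any pairing must change $n$. In particular, replacing $i$ by $n-i$ while keeping $n$ and adjusting only $j$ requires $(n+1)(j'-j)=n(2i-n)$, hence $(n+1)\mid(2i-n)$, hence $i=n/2$. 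So that map never pairs two distinct triples.

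The missing idea is to merge $i$ and $j$. Since $i\preceq n$ and $j\perp n$, also $i\perp j$, so $u=i+j=i\lor j$ and $n\lor u=n+j$. The defining equation then becomes $N=n(i+j)+n+j=nu+(n\lor u)$, which is symmetric in $n$ and $u$. The map $(n,i,j)\mapsto(n,i+j)$ is a bijection onto $\{(n,u): N=nu+(n\lor u)\}$, with inverse $i=n\land u$, $j=u-(n\land u)$. The swap $(n,u)\mapsto(u,n)$ is then the required involution, and its fixed points $u=n$ give exactly $N=n^2+n$. Pulled back to your triples, the involution is $(n,i,j)\mapsto(i+j,\,i,\,n-i)$. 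It preserves $N$ and both bit conditions, and its only fixed points are $i=n$, $j=0$. This is precisely the paper's argument. Without it, or some other explicit pairing, the proposal is a plan rather than a proof.
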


\begin{proof}
Let $n\in\NN_0$. We have
\[
\frac{1}{(1-x^{n+1})^{n+1}}=\sum_{k=0}^\infty\binom{n+k}{k}x^{(n+1)k},
\qquad
(1+x^n)^n=\sum_{i=0}^n \binom{n}{i}x^{ni}.
\] Thus, 
\[
A(x)=\sum_{n=0}^\infty \sum_{i=0}^n\sum_{k=0}^\infty
\binom{n}{i}\binom{n+k}{k}\,x^{\,n(i+1)+(n+1)k}.
\] Thus, the parity of $a_N$ is equal to the parity of $\# P_N$, where
\[
P_N=\left\{(n,i,k)\in\NN_0^3\;:\; i\leq n,\ N=n(i+1)+(n+1)k,\ 
\binom{n}{i}\binom{n+k}{k} \textnormal{ is odd}\right\}.
\] By Lucas' theorem modulo $2$ (e.g., \cite{L}), for every $n,i,k\in\NN_0$ we have
\begin{align*}
\binom{n}{i}\equiv 1 \pmod 2
\quad & \iff\quad
i\preceq n,\\
\binom{n+k}{k}\equiv 1 \pmod 2
\quad &\iff \quad
n\perp k.
\end{align*} 
Thus,
\[
P_N=
\{(n,i,k)\in\NN_0^3\;:\; N=n(i+1)+(n+1)k,\ i\preceq n,\ n\perp k\}.
\] Define
\[
Q_N=\left\{(n,u)\in\NN_0^2:\ N=nu+(n\lor u)\right\}.
\]
We claim that $\#P_N=\#Q_N$. To this end, define a map $\Psi\colon P_N\to Q_N$ by $\Psi(n,i,k)=(n,i+k)$. To see that $\Psi$ is well-defined, let $(n,i,k)\in P_N$. Since $i\preceq n$ and $n\perp k$, we have $i\perp k$ and therefore $i+k=i\lor k$.
Furthermore,
\[
n\lor(i\lor k)=(n\lor i)\lor k = n\lor k=n+k.
\]
Therefore
\[
N=n(i+1)+(n+1)k=n(i+k)+n+k=n(i+k)+(n\lor (i+k)).
\] Thus, $(n,i+k)\in Q_N$ and $\Psi$ is well-defined.

Conversely, define a map $\Phi \colon Q_N\to P_N$ by $\Phi(n,u)=(n,i,k)$, where $i=n\land u$ and $k=u-i$. To see that $\Phi$ is well-defined, let $(n,u)\in Q_N$. Then $i\preceq n, k\perp n$, and $u=i+k$. Furthermore,
$n\lor u=n+k$. Hence
\[
N=nu+(n\lor u)=n(i+k)+n+k=n(i+1)+(n+1)k.
\] Thus, $(n,i,k)\in P_N$ and $\Phi$ is well-defined.

It is immediate from the definitions that
$\Phi$ and $\Psi$ are inverses. It follows that $\#P_N=\#Q_N$.

It remains to determine the parity of $\# Q_N$. To this end notice that the condition in $Q_N$ is symmetric, i.e., $(n,u)\in Q_N \iff (u,n)\in Q_N$. Thus, elements $(n,u)$ in $Q_N$ with $n\neq u$ come in pairs and therefore do not affect the parity of $\# Q_N$. It remains to count elements in $Q_N$ of the form $(n,n)$. In this case, the condition in $Q_N$ has the form $N=n^2+(n\lor n)=n^2+n=n(n+1)$. Hence, $a_N$ is odd if and only if $N=n(n+1)$ for some $n\in\NN_0$.
\end{proof}

\subsection{\texorpdfstring{\seqnum{A355372}}{}}

Let 
\[
F(x)=\frac{\log\left(\frac{1-x}{1-2x}\right)}{(1-x)^3}.
\]
Let $(a_n)_{n\in\NN_0}$ be the sequence corresponding to the exponential generating function, i.e.,
\[
F(x)=\sum_{n=0}^\infty a_n\frac{x^n}{n!}.
\]
The statement of Theorem \ref{t172} was conjectured by Tebni in \seqnum{A355372}. We shall need the following result.

\begin{lemma}\label{l172}
Let $n\in\NN$. Then
\[
a_n = n!\sum_{m=1}^n \frac{1}{m}\binom{n+2}{m+2}.
\]
\end{lemma}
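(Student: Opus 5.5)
We want $[x^n/n!]F(x)$, so it suffices to show that for every $n\in\NN$,
\[
[x^n]\frac{\log\left(\frac{1-x}{1-2x}\right)}{(1-x)^3}=\sum_{m=1}^n \frac{1}{m}\binom{n+2}{m+2}.
\]
The plan is to expand both factors as power series, form their Cauchy product, and then identify the resulting double sum with the right-hand side by a binomial rearrangement.

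\textbf{Step 1: expand the two factors.} For the logarithm,
\[
\log\frac{1-x}{1-2x}=-\log(1-2x)+\log(1-x)=\sum_{k\ge1}\frac{2^k-1}{k}x^k.
\]
For the other factor, $(1-x)^{-3}=\sum_{j\ge0}\binom{j+2}{2}x^j$. Their product gives
\[
[x^n]F(x)=\sum_{k=1}^n \frac{2^k-1}{k}\binom{n-k+2}{2}.
\]

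\textbf{Step 2: rewrite the target sum.} Instead of manipulating this convolution directly, I would show the right-hand side has the same generating function. A cleaner route is to avoid the $2^k$ entirely. Write
\[
\log\frac{1-x}{1-2x}=-\log\left(1-\frac{x}{1-x}\right)=\sum_{m\ge1}\frac{1}{m}\frac{x^m}{(1-x)^m}.
\]
Substituting, we get
\[
F(x)=\sum_{m\ge1}\frac1m\,\frac{x^m}{(1-x)^{m+3}}.
\]

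\textbf{Step 3: extract coefficients.} Using $[x^{n-m}](1-x)^{-(m+3)}=\binom{n-m+m+2}{m+2}=\binom{n+2}{m+2}$, we obtain
\[
[x^n]F(x)=\sum_{m=1}^n\frac1m\binom{n+2}{m+2}.
\]
The terms with $m>n$ contribute nothing, since $x^m$ then exceeds degree $n$. Multiplying by $n!$ gives the lemma.

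The only point needing care is the validity of the substitution $t=\frac{x}{1-x}$ in $-\log(1-t)$ as formal power series. This is justified because $t$ has zero constant term, so the composition is well defined and the identity $1-\frac{x}{1-x}=\frac{1-2x}{1-x}$ holds formally. I expect no real obstacle beyond this.
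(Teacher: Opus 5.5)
Your proposal is correct and follows essentially the same route as the paper: you rewrite $\log\frac{1-x}{1-2x}=-\log\bigl(1-\frac{x}{1-x}\bigr)$, expand to get $F(x)=\sum_{m\ge1}\frac1m x^m(1-x)^{-(m+3)}$, and extract $[x^{n-m}](1-x)^{-(m+3)}=\binom{n+2}{m+2}$. Your Step~1 convolution with $\frac{2^k-1}{k}$ is correct but plays no role in the argument and can be dropped.
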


\begin{proof}
Using $-\log(1-x)= \sum_{m=1}^\infty \frac{x^m}{m}$ we have
\[
\log\left(\frac{1-x}{1-2x}\right) = -\log\left(1-\frac{x}{1-x}\right)
= \sum_{m=1}^\infty \frac{1}{m}
\left(\frac{x}{1-x}\right)^m.
\]
Hence,
\[
F(x)= \sum_{m=1}^\infty \frac{1}{m} x^m (1-x)^{-(m+3)}.
\] Using 
\[
(1-x)^{-r} = \sum_{k=0}^\infty \binom{r+k-1}{k} x^k,\qquad r\in\NN,
\] we obtain, for every $m\in\NN$,
\[
(1-x)^{-(m+3)}
= \sum_{k=0}^\infty \binom{m+2+k}{m+2} x^k.
\]
It follows that
\[
F(x)= \sum_{m=1}^\infty\sum_{k=0}^\infty
\frac{1}{m}\binom{m+2+k}{m+2}x^{m+k}.
\] For every $m\in\NN, k\in\NN_0$, we have $n=m+k\iff k=n-m$. Thus,
\[
[x^n]F(x)
= \sum_{m=1}^n \frac{1}{m}\binom{m+2+(n-m)}{m+2}
= \sum_{m=1}^n \frac{1}{m}\binom{n+2}{m+2}.
\] Since $a_n=n![x^n]F(x)$, the assertion follows.
\end{proof}

\begin{theorem}\label{t172}
Let $p$ be a prime number. Then
$a_p \equiv -1 \pmod p$.
\end{theorem}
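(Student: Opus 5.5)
The plan is to start from the explicit formula of Lemma~\ref{l172} and reduce it modulo $p$ term by term. With $n=p$, Lemma~\ref{l172} gives
\[
a_p=\sum_{m=1}^{p}\frac{p!}{m}\binom{p+2}{m+2}.
\]
The first step is to check that each summand is an integer, so that reducing modulo $p$ termwise makes sense. Since $1\le m\le p$, we have $m\mid p!$, so $\frac{p!}{m}$ is an integer. The binomial coefficients are integers as well.

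Next I would split off the term $m=p$ and treat the terms with $1\le m\le p-1$ together. For such $m$ we can write
\[
\frac{p!}{m}=p\cdot\frac{(p-1)!}{m},
\]
and $\frac{(p-1)!}{m}$ is an integer because $m\le p-1$. Hence every summand with $m<p$ is divisible by $p$, and these terms vanish modulo $p$.

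The remaining term is $m=p$. Here $\binom{p+2}{p+2}=1$ and $\frac{p!}{p}=(p-1)!$, so by Wilson's theorem (recalled before Theorem~\ref{tttte}) this term is $\equiv -1\pmod p$. Together with the previous step this gives $a_p\equiv -1\pmod p$.

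I do not expect a real obstacle. The only point needing care is that the division by $m$ leaves an integer, with the factor $p$ intact exactly when $m<p$. The argument covers $p=2$ as well; directly, $a_2=2\left(4+\tfrac12\right)=9\equiv -1\pmod 2$.
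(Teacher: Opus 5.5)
Your proposal is correct and follows the paper's proof essentially verbatim. You isolate the $m=p$ term, which equals $(p-1)!\equiv -1\pmod p$ by Wilson's theorem, and you observe that for $1\le m\le p-1$ the factor $\frac{p!}{m}=p\cdot\frac{(p-1)!}{m}$ makes each remaining summand divisible by $p$.
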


\begin{proof}
By Lemma \ref{l172},
\[
a_p= p! \sum_{m=1}^p \frac{1}{m}\binom{p+2}{m+2} =  \sum_{m=1}^{p-1} \frac{p!}{m}\binom{p+2}{m+2}+ (p-1)!.
\] Let $m\in[p-1]$. Then $m\mid (p-1)!$ and therefore $\frac{p!}{m} = \frac{p(p-1)!}{m}$ is divisible by $p$. Thus, $p\mid \frac{p!}{m}\binom{p+2}{m+2}$ and therefore
\[
p! \sum_{m=1}^{p-1} \frac{1}{m}\binom{p+2}{m+2} \equiv 0 \pmod p.
\] Using Wilson's theorem, the proof is complete.
\end{proof}

\subsection{\texorpdfstring{\seqnum{A359087}}{}}

Let $n\in\NN$. We successively construct a triangular array as follows.
For $-n+1\leq j\leq n-1$, let $e(n-1,j)=n-|j|$. Now assume that the $k$th row has already been defined, where
$1\leq k\leq n-1$. Then, for $-k+1\leq j\leq k-1$, let
\[
e(k-1,j)=e(k,j-1)+e(k,j)+e(k,j+1).
\]
Set $a_n=e(0,0)$. The statement of the following theorem was conjectured in \seqnum{A359087} by Schott. It relates $(a_n)_{n\in\NN}$ with the sequence $(b_n)_{n\in\NN_0}$, defined as follows: For $n\in\NN_0$ let $b_n$ denote the number of $(1,0)$ steps in all paths of length $n$ with steps $(1,1), (1,-1)$, and $(1,0)$, starting at $(0,0)$, staying weakly above the $x$-axis.

\begin{theorem}
Let $n\in\NN$. Then
\begin{equation}\label{e1787}
a_n = n3^{n-1} - 2b_{n-1}.
\end{equation}
\end{theorem}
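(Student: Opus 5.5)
Unwinding the recursion, $e(0,0)$ is a weighted count of three-step walks. Each application of the rule $e(k-1,j)=e(k,j-1)+e(k,j)+e(k,j+1)$ moves one level up and shifts $j$ by $-1,0,+1$. Hence
\[
a_n=\sum_{w} \bigl(n-|j_w|\bigr),
\]
where $w$ runs over all $3^{n-1}$ walks of length $n-1$ with steps in $\{-1,0,1\}$ starting at $0$, and $j_w$ is the endpoint. The index constraint $-k+1\le j\le k-1$ is automatic, since after $k$ steps $|j|\le k$. Thus
\[
a_n=n3^{n-1}-\sum_{w}|j_w|,
\]
and \eqref{e1787} reduces to the identity
\[
S_m:=\sum_{w\in\{-1,0,1\}^m}|j_w| = 2b_m,\qquad m=n-1.
\]

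To prove this identity I would compare generating functions. On the walk side,
\[
S_m=2\sum_{k\ge1}k\,c_{m,k},\qquad c_{m,k}=[t^k](t+1+t^{-1})^m .
\]
On the Motzkin side, let $M(x)=\frac{1-x-\sqrt{1-2x-3x^2}}{2x^2}$, so that $M=1+xM+x^2M^2$. Marking the level steps and decomposing by first return gives the bivariate function $M(x,u)$ with $M=1+uxM+x^2M^2$. Differentiating in $u$ at $u=1$ yields the generating function for the total number of level steps over Motzkin paths returning to $0$, namely $xM^2/(1-x-2x^2M)$. Prefix paths (weakly above the axis, any end height) then decompose as a Motzkin path followed by up steps, each up step followed by a Motzkin path. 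Counting level steps over these gives $\sum_m b_mx^m$ as a rational expression in $x$ and $M$.

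For $S_m$, the standard approach uses $\sqrt{1-2x-3x^2}$: the generating function for walks ending at height $k\ge1$ is
\[
\frac{(xM)^k}{\sqrt{1-2x-3x^2}},
\]
so
\[
\sum_m S_m x^m=\frac{2xM}{\sqrt{1-2x-3x^2}\,(1-xM)^2}.
\]
It then suffices to show $\sum b_mx^m=\frac{xM}{\sqrt{1-2x-3x^2}(1-xM)^2}$, using the relation $M=1+xM+x^2M^2$ to simplify.

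The main obstacle is the algebraic simplification. A cleaner alternative, which I would try first, is a bijective route via a reflection/cycle-lemma-type argument. By symmetry $S_m=2\sum_{w:\,j_w>0}j_w$. For each walk ending at $k>0$, choose one of its $k$ "record-setting" up steps. I would aim to map such marked walks bijectively to pairs (prefix Motzkin path of length $m$, marked level step), using the classical bijection between unrestricted $\{-1,0,1\}$-walks and prefix Motzkin paths (flip the last visits below into ups). If that bijection is awkward, I would fall back on verifying the generating function identity directly, which is routine once both sides are written in terms of $M$.
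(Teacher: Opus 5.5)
\textbf{Comparison with the paper.} Your reduction $a_n=n3^{n-1}-\sum_w|j_w|$ is the paper's first step. The second half follows a different route.

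The paper proves $\sum_{j\ge1}jT(m,j)=b_m$ in two moves:
\begin{itemize}
\item it derives the recurrence $S_m=3S_{m-1}+T(m-1,0)$ from the trinomial recursion, which gives $\frac{x}{(1-3x)\sqrt{1-2x-3x^2}}$;
\item it then cites OEIS \seqnum{A132894} for the generating function of $(b_n)$, without deriving it from the lattice-path definition.
\end{itemize}
Your plan derives both sides from path decompositions: last passages for the walks, and first return plus the factorization $M(UM)^k$ for Motzkin prefixes. It is therefore more self-contained, since it proves the formula the paper cites. The paper's recurrence is quicker on the walk side. Your walk-side formulas, $\sum_m c_{m,k}x^m=(xM)^k/\sqrt{1-2x-3x^2}$ and the total $2xM/\bigl(\sqrt{1-2x-3x^2}\,(1-xM)^2\bigr)$, are correct.

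\textbf{A slip in the Motzkin step.} There is an error you must fix, or the final comparison fails.
\begin{itemize}
\item Differentiating $M=1+uxM+x^2M^2$ at $u=1$ gives $M_u(1-x-2x^2M)=xM$. So the level-step generating function is $xM/(1-x-2x^2M)$, not $xM^2/(1-x-2x^2M)$. You may have conflated it with the equivalent form $xM^2/(1-x^2M^2)$. Check at $m=2$: the paths $HH$ and $UD$ have $2$ level steps in total, while your formula gives $3$.
\item Directly from the definition of $M$, $1-x-2x^2M=\sqrt{1-2x-3x^2}$.
\item With the prefix generating function $M(x,u)/(1-xM(x,u))$ you then get
\[
\sum_m b_mx^m=\frac{M_u}{(1-xM)^2}=\frac{xM}{\sqrt{1-2x-3x^2}\,(1-xM)^2}.
\]
This is exactly half of your walk-side expression, so there is no ``algebraic obstacle'' at all.
\item To match the paper's closed form, use $(1-xM)^2=1-2xM+x^2M^2=M(1-3x)$.
\end{itemize}
With your stated numerator $xM^2$, you would carry an extra factor of $M$ and be trying to prove a false identity.

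\textbf{The bijective alternative.} The ``classical bijection between unrestricted $\{-1,0,1\}$-walks and prefix Motzkin paths'' does not exist as stated: already for $m=1$ there are $3$ walks but only $2$ prefixes. The classical equinumerosity is between prefixes and walks ending at height $0$ or $1$. That route is only sketched anyway, so drop it or rework it; the corrected generating-function argument is sufficient.
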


\begin{proof}
The key observation is that for every $-n+1\leq j\leq n-1$, the entry $e(n-1,j)$ contributes its value to the bottom element exactly once for each path from $(0,0)$ to $(n-1,j)$, where the path uses only the steps $(1,1)$, $(1,0)$, and $(1,-1)$. For $m\in\NN_0$ and $k\in\ZZ$, we denote by $T(m,k)$ the number of such paths of length $m$ ending at $(m,k)$. Thus, with the symmetry $T(n-1,-j)=T(n-1,j)$, we obtain
\begin{align*}
a_n& = \sum_{j=-n+1}^{n-1} e(n-1,j)T(n-1,j)\\
&= nT(n-1,0)
+ \sum_{j=1}^{n-1}(n-j)(T(n-1,j)+T(n-1,-j))\\
&= nT(n-1,0)
+ 2\sum_{j=1}^{n-1}(n-j)T(n-1,j)\\
&=n\left(T(n-1,0) + 2\sum_{j=1}^{n-1}T(n-1,j)\right)
   -2\sum_{j=1}^{n-1} jT(n-1,j)\\
&=n\sum_{j=-n+1}^{n-1}T(n-1,j)
   -2\sum_{j=1}^{n-1} jT(n-1,j).   
\end{align*}
Now, the expression \[\sum_{j=-n+1}^{n-1}T(n-1,j)\] merely counts all possible walks of length $n-1$, and their number is $3^{n-1}$. For $m\in\NN_0$, set
\[
S_m = \sum_{j=1}^{m} jT(m,j).
\] It remains to show that $S_m=b_m$. To this end, we first derive a recurrence for $S_m$. First, notice that $S_0=0$. Now, let $m\in\NN$ and $k\in\ZZ$. Using the recursion for $T(m,k)$,
\[
T(m,k)=T(m-1,k-1)+T(m-1,k)+T(m-1,k+1),
\]
we have
\begin{align}
S_m
&= \sum_{j=1}^{m} j T(m,j) \nonumber\\
&= \sum_{j=1}^{m} j (T(m-1,j-1)+T(m-1,j)+T(m-1,j+1) )\nonumber\\
&=\sum_{j=0}^{m-1} (j+1) T(m-1,j)+\sum_{j=1}^mjT(m-1,j)+\sum_{j=2}^{m+1}(j-1) T(m-1,j)\nonumber\\
&=\left(S_{m-1} + \sum_{j=0}^{m-1}T(m-1,j)\right)+S_{m-1}+\left(S_{m-1} - \sum_{j=0}^{m-1}T(m-1,j) + T(m-1,0)\right)\nonumber\\
&= 3S_{m-1} + T(m-1,0).\label{e9987}
\end{align}
Let $S(x)=\sum_{m=0}^\infty S_m x^m$ be the generating function for $(S_m)_{m\in\NN_0}$ and let $R(x)=\sum_{m=0}^\infty T(m,0)x^m$ be the generating function for $(T(m,0))_{m\in\NN_0}$. By a formula in \seqnum{A002426}, \[R(x)= \frac{1}{\sqrt{1-2x-3x^{2}}}.\] Multiplying \eqref{e9987} by $x^m$ and summing over $m\in\NN$ yields
\[
S(x) = 3x S(x) + x R(x).
\]
Solving for $S(x)$ we obtain
\[
S(x) = \frac{x}{(1-3x)\sqrt{1-2x-3x^2}}.
\]
By a formula in \seqnum{A132894}, this is precisely the generating function of $(b_n)_{n\in\NN_0}$. Thus, $S_m=b_m$, for every $m\in\NN_0$.
\end{proof}

\begin{corollary}
Let $A(x)$ be the generating function of $(a_n)_{n\in\NN}$. Then
\[
A(x)=\frac{x}{(1-3x)^2}
-\frac{2x^{2}}{(1+x)^{1/2}(1-3x)^{3/2}}.
\]
\end{corollary}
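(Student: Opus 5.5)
The plan is to read off the generating function directly from the identity \eqref{e1787} of the preceding theorem, namely $a_n=n3^{n-1}-2b_{n-1}$ for $n\in\NN$. Multiplying by $x^n$ and summing over $n\geq 1$ splits $A(x)$ into two pieces. The first piece is
\[
\sum_{n=1}^\infty n3^{n-1}x^n=\frac{x}{(1-3x)^2},
\]
which follows by differentiating the geometric series $\sum_{n\ge0}(3x)^n=\frac{1}{1-3x}$ with respect to $x$ and multiplying by $x$. The second piece is
\[
-2\sum_{n=1}^\infty b_{n-1}x^n=-2x\sum_{m=0}^\infty b_mx^m=-2xS(x),
\]
where, as shown in the proof of the preceding theorem, $S(x)=\sum_{m\ge0}S_mx^m=\sum_{m\ge0}b_mx^m=\frac{x}{(1-3x)\sqrt{1-2x-3x^2}}$.

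It then remains to simplify $-2xS(x)$. I will factor $1-2x-3x^2=(1-3x)(1+x)$, so that $\sqrt{1-2x-3x^2}=(1-3x)^{1/2}(1+x)^{1/2}$ as formal power series with constant term $1$. This gives
\[
-2xS(x)=-\frac{2x^2}{(1-3x)(1-3x)^{1/2}(1+x)^{1/2}}=-\frac{2x^{2}}{(1+x)^{1/2}(1-3x)^{3/2}}.
\]
Adding the two pieces yields the claimed formula for $A(x)$.

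There is no real obstacle here. The only point needing a remark is the splitting of the square root, which is justified in the ring of formal power series: both $(1-3x)^{1/2}$ and $(1+x)^{1/2}$ are defined by the binomial series with constant term $1$, and the square root of a series with constant term $1$ having constant term $1$ is unique. I also need to check the index shift, since $b$ is indexed from $0$ while $a$ is indexed from $1$; the factor $x$ in $-2xS(x)$ accounts for this.
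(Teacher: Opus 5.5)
Your proposal is correct and is the intended argument: the paper gives the corollary without a separate proof, as an immediate consequence of $a_n=n3^{n-1}-2b_{n-1}$ combined with $S(x)=\frac{x}{(1-3x)\sqrt{1-2x-3x^2}}$ and the factorization $1-2x-3x^2=(1-3x)(1+x)$. One small slip in your description: differentiating $\frac{1}{1-3x}$ and multiplying by $x$ gives $\frac{3x}{(1-3x)^2}$, so you must also divide by $3$; the formula $\sum_{n\ge1}n3^{n-1}x^n=\frac{x}{(1-3x)^2}$ that you actually write down is right.
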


\section*{Declaration of generative AI and AI-assisted technologies
in the manuscript preparation process}

During the preparation of this manuscript, the author used ChatGPT
to improve its language, clarity, and presentation. The author
subsequently reviewed and edited the output and takes full
responsibility for the content of the manuscript.

\end{document}